\def\R {{\mathbb R}}
\def\H01{{H_0^1(\Omega)}}
\def\L2{{L^2(\Omega)}}
\newtheorem{definition}{Definition}[section]
\newtheorem{lemma}{Lemma}[section]
\newtheorem{remark}{Remark}[section]
\newtheorem{proposition}{Proposition}
\newtheorem{algorithm}{Algorithm}[section]
\DeclareMathOperator*{\argmin}{arg\,min}
\DeclareMathOperator*{\sgn}{sign}
\newcommand{\wh}{\widehat}
\newcommand{\Beq}{\begin{equation}}
\newcommand{\Eeq}{\end{equation}}
\newcommand{\beq}{\begin{equation*}}
\newcommand{\eeq}{\end{equation*}}
\newcommand{\bal}{\begin{align}}
\newcommand{\eal}{\end{align}}
\renewcommand{\L}{\langle}
\newcommand{\bp}{\begin{prob}}
\newcommand{\ep}{\end{prob}}
\newcommand{\bpr}{\begin{proof}}
\newcommand{\epr}{\end{proof}}
\newcommand{\bel}[1]{\begin{equation}\label{#1}}
\newcommand{\ee}{\end{equation}}
\author{
Madhu Gupta{\footnote{
madhu.gupta@mavs.uta.edu, Department of Mathematics, University of Texas at Arlington, 655 W. Mitchell Street, 222H SEIR Building, Arlington, Texas-76010, USA.}}\and
Rohit Kumar Mishra{\footnote{
rohit.mishra@uta.edu, Department of Mathematics, University of Texas at Arlington, 655 W. Mitchell Street, 222B SEIR Building, Arlington, Texas-76010, USA}}\and
Souvik Roy{\footnote{
souvik.roy@uta.edu, Department of Mathematics, University of Texas at Arlington, 655 W. Mitchell Street, 219 SEIR Building, Arlington, Texas-76010, USA; Tel: +1 (817) 272-5748 (Corresponding author).}}
}
\date{}
\begin{document}

\title{Sparse reconstruction of log-conductivity in current density impedance tomography}

\maketitle
\begin{abstract}
A new non-linear optimization approach is proposed for the sparse reconstruction of log-conductivities in current density impedance imaging. This framework comprises of minimizing an objective functional involving a least squares fit of the interior electric field data corresponding to two boundary voltage measurements, where the conductivity and the electric potential are related through an elliptic PDE arising in electrical impedance tomography. Further, the objective functional consists of a $L^1$ regularization term that promotes sparsity patterns in the conductivity and a Perona-Malik anisotropic diffusion term that enhances the edges to facilitate high contrast and resolution. This framework is motivated by a similar recent approach to solve an inverse problem in acousto-electric tomography. Several numerical experiments and comparison with an existing method demonstrate the effectiveness of the proposed method for superior image reconstructions of a wide-variety of log-conductivity patterns. 
\end{abstract}

Keywords: {Inverse problems, PDE-constrained optimization, proximal methods, edge-enhancement, sparsity patterns, current density impedance imaging. }\\

MSC: {35R30, 49J20, 49K20,  65M08, 82C31}

\section{Introduction}
Electrical impedance tomography (EIT) is an imaging modality, where one attempts to recover the conductivity of a body from the boundary measurement of current and voltage \cite{cheney1999}. The underlying inverse problem is highly ill-posed and non-linear yet very important due to its wide range applications in the fields such as medical imaging \cite{Zain} and engineering \cite{Kruger,Waterfall}. The mathematical formulation of the EIT inverse problem is given by the following conductivity equation
\begin{equation}\label{elliptic_BVP}
\begin{aligned}
-\nabla\cdot(\sigma(x)\nabla{u(x)})&=0 \quad x \in \Omega, \\
\sigma(x)\dfrac{\partial u}{\partial \nu}(x)&=f(x) x \in \Gamma,\\
\end{aligned}
\end{equation}
where $\Omega\subset\mathbb{R}^n$ is a convex and bounded domain with Lipschitz boundary and $\Gamma$ is the boundary of $\Omega$. In this model, $\sigma$ is the electrical conductivity, $u$ represents the electric potential and $f$ is the current applied to the boundary. 
 
The reconstructions obtained through the EIT setup usually have high contrast but limited spatial resolution \cite{seagar1987}. On the other hand, reconstructions obtained through ultrasound imaging has very high resolution but limited contrast \cite{Gaik2018,Roy2015}. In recent years, attempts have been made to combine multiple imaging modalities to obtain image reconstructions with both high contrast and high resolution. This led to the emergence of hybrid imaging methods that belong to class of coupled-physics imaging modalities to generate images of superior quality. One of such imaging methods, known as current density impedance imaging  (CDII) combines the classical EIT setup with magnetic resonance (MR) scanning \cite{Has08,Na11}. It is alternatively known as magnetic resonance EIT (MREIT). Current or voltage is applied through the electrodes, which give rise to an interior electric field and the corresponding generated magnetic field, represented as $B=(B_x,B_y,B_z)$, is measured by the MR scanner. The corresponding inverse problem is to solve for the conductivity $\sigma$ from $B_z$ using the well-known iterative Harmonic $B_z$-algorithm \cite{Ga18,Liu07}. Convergence of the harmonic $B_z$ algorithm has been well-studied \cite{Ga18,Liu07,Liu10}. In particular, it has been shown that for small contrast values of the target conductivity, the harmonic $B_z$-algorithm is stable and convergent, provided we have a good initial guess \cite{Liu07}. Thus, it is not clear that one can recover good quality images for high contrast objects through Harmonic $B_z$-algorithm.
 
An alternate approach to solve the CDII inverse problem is to use the knowledge of interior electric field, which is obtained from the magnetic field. Correspondingly, the magnitude of the interior electric field is also determined \cite{Sc91,Se11}, which is given by
\begin{equation}\label{interior_data}
H(\sigma(x))=\sigma(x) |\nabla u(x)|, \quad x\in\Omega. 
\end{equation}
The formulation of reconstruction problem is as follows: Given the boundary data $f$ for, possibly, several choices of boundary patterns and the corresponding interior measurement data $H$, find the conductivity distribution $\sigma$. In this framework, we use the internal function $H(\sigma)$ to replace $\sigma$ in the EIT equation \eqref{elliptic_BVP} to get the following nonlinear equation
\begin{equation}\label{1_Laplacian}
\begin{aligned}
\nabla\cdot\Bigg(\dfrac{H}{|\nabla u|}\nabla{u}\Bigg)=0 
& \mbox{ in }\Omega, \\
\dfrac{H}{|\nabla u|}\dfrac{\partial u}{\partial \nu}=f&~\mbox{on } {\Gamma}.
\end{aligned}
\end{equation}
For the CDII inverse problem, the solution to the boundary value problem (\ref{1_Laplacian}) is crucial but it is difficult to use it in practice because of its highly nonlinear behaviour and also because the data represented by the measured values of $H$ enter as a coefficient of the differential model \cite{seagar1987}. Even with the additional measurements, analysis and application of the 1-Laplacian relies on an iterative localized algorithm, wherein one considers an approximation of the CDII problem. This subsequently led to several computational approaches in solving the CDII inverse problem. In \cite{kuchment}, it was proved that the linearized problem is elliptic and hence solvable, if there are at least $n$ set of measurements $\{H_i(\sigma)\}_{i=1}^n$  and corresponding to $n$ boundary  data $\{f_i\}_{i=1}^n$ such that $\nabla u_i$ and $\nabla u_j$ are nowhere collinear for $i \neq j$. It has been shown in \cite{Kim2002} that the solution of the above 1-Laplacian equation with the Neumann boundary condition is non-existent unless additional measurements with different boundary current patterns are used. Recovery of isotropic conductivity in regions where the magnetic field is transversal using two internal current distributions was done using an explicit local formula \cite{Lee04}. Moreover, using the information of two internal current distributions, the authors in \cite{Kim2002} uniquely determine the singular support of the conductivity function. In \cite{Na07}, the authors showed that the conductivity in the planer domain can be recovered  from a single voltage-current on a part of boundary and the magnitude of one interior current density. In the same article, they also provide  sufficient conditions on Dirichlet boundary data to guarantee unique recovery of conductivity. In \cite{Na09}, the recovery of H\"{o}lder continuous conductivities have been establised for domains with connected boundary from the interior measurement of the magnitude of one current density. Determination of  isotropic conductivity variations from measurements of two current density vector fields was studied in \cite{Has08}.  In \cite{Tamasan}, authors showed the recovery of  planar conductivities by solving the 1-Laplace equation with partial boundary data. The authors in \cite{Monard2012,Bal2014,Bal_22014} present explicit reconstruction formulae for recovering the conductivity distribution from multiple interior measurements in two and higher dimensions.

The well-known numerical reconstruction algorithm using the internal current distribution is an iterative $J$-substitution algorithm which was first introduced by \cite{Kwon02} and subsequently considered in other works, see for e.g., in \cite{Khang02,Kim03,Na09,Na11}. It has been shown that the $J$-substitution algorithm is able to reconstruct the conductivity with high resolution \cite{Kim03,Kim2002}. Another numerical reconstruction iterative method is the regularized D-bar method \cite{Kn09} that provides images with high resolution. In \cite{Mora12}, the authors use an alternating split Bregman algorithm for solving a minimization problem related to the energy functional corresponding to the 1-Laplacian equation (\ref{1_Laplacian}). Also, in \cite{Hoff2014}, Picard and Newton type algorithms are implemented to solve the 1-Laplacian problem. But there is not enough evidence to suggest that these existing algorithms (linearized or localized iterative methods) can provide high contrast images, specially for objects with holes or inclusions, which are inherent to CDII reconstructions. 

The CDII inversion problem can be viewed as an inverse problem of estimating the conductivity parameter from the conductivity partial differential equation (PDE) (\ref{elliptic_BVP}). A robust way of solving this inverse problem is to formulate it as an optimal control problem, where the condiuctivity parameter is the control variable that drives the interior electric field close to the measured value, with dynamics governed by the conductivity PDE. Such optimal control methods has been used previously in the context of ultrasonically-induced Lorentz force electrical impedance tomography \cite{Ammari2015}, magnetoacoustic tomography \cite{Ammari_22015, Ammari2017,Qiu2015} and acousto-electric tomography (AET) \cite{Ades2018,Roy_AET2}. In \cite{Ammari2015}, the authors use an optimal control framework to recover the conductivity distribution from the measurements of current induced by static magnetic field through the Lorentz force. They solve the optimal control problem using an orthogonal field method.  In \cite{Ammari_22015, Ammari2017,Qiu2015}, the authors use an optimal control approach to reconstruct conductivity distribution of biological tissue from measurements of the Lorentz force induced tissue vibration. In \cite{Ades2018,Roy_AET2}, the authors reconstruct log-conductivity in acousto-electric tomography (AET) using an optimal control formulation based on the theory of anisotropic diffusion to potentially obtain reconstructions with high resolution and contrast. The results obtained in these papers demonstrated that such optimal control frameworks are robust and accurate for imaging modalities arising through a partial differential equation (PDE). 

In this paper, we consider a similar optimization framework developed in \cite{Roy_AET2} for reconstructing the log-conductivity in CDII. We formulate a minimization problem, where given interior electric field intensity data, we aim at determining the variation in conductivity from a known background conductivity. We, further, assume that this variation demonstrates a sparsity pattern. Such patterns arise frequently in several tomographic imaging scenarios, for e.g. in blood vessel tomographic reconstructions \cite{Li2002}. This is incorporated in our model through a $L^2-L^1$ regularization term in our objective functional. To obtain sharp edges and, thus, improve spatial resolution of the reconstructed images, we use a Perona-Malik anisotropic diffusion filtering term in our functional. The resulting optimality system gives rise to an elliptic adjoint equation with a $L^2$ source term. Classical cell-nodal finite difference schemes are not applicable for solving such equations. We, thus, use a averaged cell-nodal scheme to solve such equations. Finally, we solve the optimization problem using a variable inertial proximal scheme that efficiently handles the non-differentiable terms in the objective functional. We demonstrate through several examples that our method can be used to obtain superior quality reconstructions for objects with holes and inclusions. In this context we would like to remark that our framework can also be used for obtaining superior reconstruction of anisotropic conductivity distributions, in ultrasonically-induced Lorentz force electrical impedance tomography and magnetoacoustic tomography. Further, our proposed framework can also be used in other hybrid imaging modalities like quantitative photoacoustic tomography \cite{Gao12} and quantitative thermoacoustic tomography \cite{Ammari2013} to obtain better reconstructions.

The article is organized as follows: In the Section \ref{sec:Minimization_problem}, we formulate the minimization problem for the CDII.  In the Section \ref{sec:Theory}, we present some theoretical results about our optimization problem. We also characterize the optimality system. The variable inertial proximal scheme and the averaged cell-nodal schemes to solve the optimization problem are discussed in Section \ref{sec:numsol}. In the Section \ref{sec:numexp}, we present simulation results of our CDII framework and compare them with the reconstructions obtained using the Picard scheme proposed in \cite{Hoff2014}, which validate our framework for CDII and demonstrate the effectiveness of our method to reconstruct wide variety of objects with corners, holes and inclusions. A section on conclusions completes our work.

\section{A minimization problem}\label{sec:Minimization_problem}
We consider the conductivity equation in $\R^2$ arising in EIT
\begin{equation}\label{eq1}
\begin{aligned}
-&\nabla\cdot(e^{\sigma(x,y)}\nabla{u(x,y)})=0~ \mbox{in }\Omega, \\
&u(x,y)|_{\Gamma}=f_D(x,y),\\
\end{aligned}
\end{equation}
where $\Omega\subset \mathbb{R}^2$ is bounded, $\Gamma$ is the boundary of $\Omega$, $e^\sigma$ is the conductivity coefficient and $u\in H^1_{f_D}(\Omega)=\lbrace u\in H^1(\Omega) : u=f_D \mbox{ on } \Gamma \rbrace$ is the electric potential. 

We assume that $\sigma$ is a sparse conductivity coefficient which we want to recover, given the fact that the conductivity of the background is 1. The conductivity equation (\ref{eq1}) can also be written as 
$$
\mathcal{L}(u,\sigma,f_D)=0,
$$
where 
$\sigma(x,y)\in L_{ad} = \lbrace \sigma \in H_0^1(\Omega): \sigma_l\leq\sigma(x,y) \leq \sigma_u,~ \forall (x,y)\in\Omega \rbrace$, $\sigma_u >0$ and $\sigma_l < 0$ .

We consider an optimization-based approach for reconstructing $\sigma$ given $H_1(\sigma),H_2(\sigma)$, where 
\[
H(\sigma) = e^\sigma |\nabla u|
\]
is the interior electric field corresponding to the voltage potential $u$. We consider the following cost functional 
\begin{equation}\label{functional}
\begin{aligned}
J(\sigma,u_1,u_2)= &\sum_{j=1}^2~\dfrac{\alpha_j}{2}\int_\Omega~  (H_j(x,y)-H_j^\delta(x,y))^2~dxdy  +\dfrac{\beta}{2} \|\sigma\|^2_{L^2(\Omega)} \\
&+ \gamma \, \|\sigma\|_{L^1(\Omega)}+\dfrac{\delta}{2}\int_\Omega \log (1+|\nabla \sigma(x,y)|^2)~dx dy
\end{aligned}
\end{equation}
where $u_1,u_2$ satisfy (\ref{eq1}) with boundary data $f_D^1,f_D^2$.
We now consider the following minimization problem
\begin{equation}\label{min_problem}
\begin{aligned}
\min_\sigma & ~J(\sigma, u_1, u_2),\\
\mbox{ s.t. } &\mathcal{L}(u_1,\sigma,f_D^1)=0,\\
& \mathcal{L}(u_2,\sigma,f_D^2)=0.
\end{aligned} \tag{P}
\end{equation}

The term $\gamma \, \|\sigma\|_{L^1(\Omega)}$, $\gamma > 0$  in the functional, defined in (\ref{functional}), implements a $L^1$ regularization of the minimization problem that promotes sparsity patterns in the reconstruction of conductivity. Such a regularization method mirrors the well known compressed-sensing technique; see  \cite{Candes}. In recent past, optimal control with $L^1$ cost functionals has become a topic of major interest \cite{stadler}, because one obtains sparse controls through this procedure, which finds numerous applications. The motivation for sparse log-conductivity patterns is based on the assumption that the background conductivity is known to be 1 in a substantial part of the domain $\Omega$ after normalization and varies considerably from this value in correspondence to different kind of objects present within the domain.

The combined $L^2$-$L^1$ regularization allows for the reconstruction of conductivity, and thus the imaging of, possibly, irregular objects inside $\Omega$. This does not serve the ultimate goal of reconstructing objects like tissues in medical imaging, which are more regular, save for the edges that eventually define them. We infuse this additional aprior knowledge into our model through the last term in our functional (\ref{functional}) that, commonly, appears in the field of anisotropic diffusion. Such a term plays an important role in 
dampening image noise while keeping significant parts of the image content such as edges and other anatomical details that are of utmost importance in the interpretation of the image. Anisotropic diffusion means non-uniform diffusion in different directions. The regions where $|\nabla\sigma|$ is very small corresponds to noise and thus, the process of smoothening occurs. At the edges or singularities of an object, where the value of $|\nabla\sigma|$ is large, there is a small amount of smoothening and this preserves the edges. A standard technique to implement anisotropic diffusion, in order to obtain a good contrast, is to use a total variation (TV) regularization \cite{Chan_Esedoglu_Nikolova_2006, RudinOsherFatemi1992}. But, this regularization method gives rise to a non-differentiable term in the functional (\ref{functional}), thus requiring more sophisticated optimization algorithms. On the other hand, anisotropic diffusion is inherent to Perona-Malik (PM) filtering \cite{Perona_Malik}. It is well-known that the diffusion process governed by the PM equation leads to a decrease in the total variation during its evolution \cite{ScherzerWeickert2000}. We, thus, choose the energy functional of the Perona-Malik equation for anisotropic diffusion \cite{Perona_Malik}. One can note that the PM regularization term is differentiable and, thus, easier to handle than the TV regularization term.

Mathematically, one can consider the PM filtering as the gradient flow generated by the non-convex and lower semi-continuous functional given by
$$
J_{PM}(\sigma) = \int_\Omega \log (1+|\nabla \sigma(x,y)|^2)~dxdy.
$$  
We refer to \cite{Catte1992,ScherzerWeickert2000} for a general introduction to anisotropic diffusion and a detailed discussion on the PM functional. Further, in \cite{Roy_AET2}, the PM model was used in the reconstruction of log-conductivities in AET and it was observed that the reconstructions obtained demonstrated superior contrast and resolution. Thus, for the current setup in CDII, we use a similar PM anisotropic diffusion filter to facilitate high contrast and high resolution images.

\section{Theory of the minimization problem}\label{sec:Theory}
In this section, we discuss the existence of solutions of the minimization problem (\ref{min_problem}) and its characterization through a first-order optimality system. We refer to this minimization problem as the CDII sparse reconstruction problem (CDII-SR). Our analysis of this problem begins with the discussion concerning the existence and uniqueness of weak solutions of $\mathcal{L}(u,\sigma, f_D) = 0$, which can be proved by standard arguments of Riesz representation theorem \cite[Chapter 8]{Gilbarg_Trudinger_2001}.
\begin{proposition}
Let $\sigma \in L_{ad}$ and $f_D \in H^{1/2}(\partial\Omega)$. Then the problem \eqref{eq1} has a unique solution in $H^1_{f_D}(\Omega)$.   
\end{proposition}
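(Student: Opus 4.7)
The plan is to reduce the inhomogeneous Dirichlet problem to a homogeneous one via boundary lifting, cast the reduced problem in weak form, and then invoke the Lax--Milgram (equivalently, the Riesz representation) theorem on $H_0^1(\Omega)$. The key observation enabling this is that the admissible set $L_{ad}$ enforces pointwise bounds $\sigma_l \le \sigma \le \sigma_u$, which translate into uniform two-sided bounds $e^{\sigma_l} \le e^{\sigma(x,y)} \le e^{\sigma_u}$ on the coefficient. This is exactly the kind of ellipticity condition that makes the standard linear elliptic theory applicable.

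First I would invoke the trace theorem to obtain a lift $w \in H^1(\Omega)$ of the boundary datum, i.e.\ a function with $w|_{\Gamma} = f_D$ and $\|w\|_{H^1(\Omega)} \le C\|f_D\|_{H^{1/2}(\Gamma)}$. Writing $u = v + w$ with the unknown $v \in H_0^1(\Omega)$, the problem \eqref{eq1} is equivalent to finding $v \in H_0^1(\Omega)$ such that
\begin{equation*}
\int_\Omega e^{\sigma}\,\nabla v \cdot \nabla \phi \,dx\,dy \;=\; -\int_\Omega e^{\sigma}\,\nabla w \cdot \nabla \phi\,dx\,dy \qquad \forall \phi \in H_0^1(\Omega).
\end{equation*}

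Next I would verify the three hypotheses of Lax--Milgram for the bilinear form $a(v,\phi) := \int_\Omega e^{\sigma} \nabla v \cdot \nabla \phi$ and the linear form $F(\phi) := -\int_\Omega e^{\sigma} \nabla w \cdot \nabla \phi$ on the Hilbert space $H_0^1(\Omega)$ equipped with the inner product $(v,\phi)_{H_0^1} = \int_\Omega \nabla v \cdot \nabla \phi$. Continuity of $a$ follows from $e^{\sigma} \le e^{\sigma_u}$ and Cauchy--Schwarz. Coercivity follows from $e^{\sigma} \ge e^{\sigma_l} > 0$ together with the Poincar\'e inequality on $H_0^1(\Omega)$, giving $a(v,v) \ge e^{\sigma_l}\|\nabla v\|_{L^2}^2 \gtrsim \|v\|_{H_0^1}^2$. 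Boundedness of $F$ follows from $|F(\phi)| \le e^{\sigma_u}\|\nabla w\|_{L^2}\|\nabla \phi\|_{L^2}$. Since $a$ is also symmetric, it defines an inner product equivalent to the $H_0^1$ one, and the Riesz representation theorem (as in \cite[Chapter 8]{Gilbarg_Trudinger_2001}) yields a unique $v \in H_0^1(\Omega)$ with $a(v,\phi) = F(\phi)$ for every $\phi \in H_0^1(\Omega)$. Setting $u := v + w$ gives the desired unique element of $H^1_{f_D}(\Omega)$; uniqueness of $u$ reduces to uniqueness of $v$, since the difference of two solutions lies in $H_0^1(\Omega)$ and satisfies $a(\cdot,\cdot) = 0$, forcing it to vanish by coercivity.

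There is no real obstacle here; the only point that requires a moment of care is that $\sigma \in L_{ad}$ is only assumed to be an $H_0^1$ function with pointwise essential bounds, so $e^{\sigma}$ is merely measurable and bounded above and below, which is still enough to run Lax--Milgram. A minor technical remark is that the specific extension $w$ chosen does not affect uniqueness of $u$, since any two lifts differ by an element of $H_0^1(\Omega)$ and the resulting $v$'s differ by the same element, so their sum $u = v+w$ is extension-independent.
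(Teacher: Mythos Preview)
Your argument is correct and is precisely the ``standard arguments of Riesz representation theorem'' that the paper invokes by citing \cite[Chapter 8]{Gilbarg_Trudinger_2001}; the paper does not spell out the proof at all, so your write-up simply makes explicit the lift-to-$H_0^1$ plus Lax--Milgram/Riesz route that the citation points to. The only cosmetic difference is that the paper frames it as Riesz representation while you also mention Lax--Milgram, but since the bilinear form is symmetric these coincide, and your handling of the coefficient bounds $e^{\sigma_l}\le e^{\sigma}\le e^{\sigma_u}$ from $\sigma\in L_{ad}$ is exactly what is needed.
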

The solvability of the CDII inversion problem depends on the type of Dirichlet boundary data  $f_D^j,~j=1,2$. In this, context, we 
have the following lemma from \cite{alessandrini_nesi}.

\begin{lemma}[Boundary data]\label{boundary_conditions}
Let $\Omega \subset \R^2$ be a bounded simply connected open set, whose boundary $\Gamma$ is a simple closed curve. 
Let $f = (f^1,f^2)$ be a mapping $\Gamma \to \R^2$ which is a homeomorphism of $\Gamma$ onto a convex closed curve $C$, and let $D$ denote the bounded convex domain bounded by $C$. 
Let $\sigma \in L^\infty (\Omega)$, and let $U = (u_1,u_2)$ be the $e^\sigma$-harmonic mapping whose components 
$u_1$ and $u_2$ are solutions to the Dirichlet problem 
\eqref{eq1} with $f_D=f_D^1$ and $f_D=f_D^2$, 
respectively, and $f_D^J \in H^1(\Omega) \cap C(\bar \Omega)$, $J = 1,2$. Then $U$ is a homeomorphism of $\Omega$ onto $D$. In particular, for all $\omega \subset\subset \Omega $ we have either $\det (\nabla u_1, \nabla u_1 ) > 0$ or 
 $\det (\nabla u_1, \nabla u_1 ) < 0$ almost everywhere 
 in $\omega$. 
 \end{lemma}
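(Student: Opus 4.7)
The plan is to follow the route of Alessandrini--Nesi, which extends the classical Rad\'o--Kneser--Choquet theorem from harmonic mappings to $\sigma$-harmonic mappings with merely $L^\infty$ coefficients. The central rigidity is that in two dimensions, solutions to $-\nabla\cdot(e^\sigma \nabla u)=0$ enjoy Bers--Nirenberg type representations: each such $u$ can be written as the real part of a generalized analytic function, so it has only isolated critical points of finite order and its nodal set near a critical point of order $k$ consists of $2(k+1)$ smooth arcs meeting transversally. I would take this nodal/critical-point structure as the main PDE input and combine it with the convexity of $C = f(\Gamma)$.

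\textbf{Step 1 (reduction to sign changes on $\Gamma$).} For any $(\alpha,\beta)\in \R^2\setminus\{0\}$ and $c\in\R$, the function $v := \alpha u_1 + \beta u_2 - c$ again solves $\mathcal{L}(v,\sigma,\tilde f)=0$ with boundary trace $\tilde f = \alpha f_D^1 + \beta f_D^2 - c$. Geometrically, the zero set of $\tilde f$ on $\Gamma$ corresponds to the intersection of the convex curve $C$ with the affine line $\{\alpha x + \beta y = c\}$; since a line meets a convex Jordan curve in at most two points, $\tilde f$ changes sign at most twice along $\Gamma$.

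\textbf{Step 2 (no interior critical points for linear combinations).} Suppose, for contradiction, that $\nabla v$ vanishes at some interior $p\in\Omega$. By the Bers--Nirenberg representation, the nodal set of $v - v(p)$ near $p$ is the union of at least four smooth arcs emanating from $p$. Tracking these arcs to the boundary (they cannot terminate at another interior critical point without forcing even more sign changes), one concludes that $\tilde f - v(p)$ must change sign at least four times on $\Gamma$, contradicting Step 1. Hence $\alpha \nabla u_1 + \beta \nabla u_2 \ne 0$ everywhere in $\Omega$ for every nontrivial $(\alpha,\beta)$.

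\textbf{Step 3 (Jacobian has constant sign on subdomains).} Step 2 implies $\det(\nabla u_1,\nabla u_2)\ne 0$ at every point of $\Omega$ where $\nabla u_1,\nabla u_2$ are defined: otherwise one could choose $(\alpha,\beta)\ne 0$ making $\alpha\nabla u_1+\beta\nabla u_2=0$ at that point. Since the Jacobian is (after the Bers--Nirenberg change of variable) locally of the form $|F'|^2$ for some generalized analytic $F$, it is nowhere zero in a topological sense; in particular, on any connected $\omega\subset\subset\Omega$ it has a definite sign a.e., which is the second conclusion of the lemma.

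\textbf{Step 4 (global homeomorphism).} Non-vanishing Jacobian yields that $U=(u_1,u_2)$ is a local homeomorphism. Combined with the fact that $U|_\Gamma = f$ is a homeomorphism onto $\partial D$ with $D$ convex, a standard degree argument (the Brouwer degree $\deg(U,\Omega,q)$ is $\pm 1$ for $q\in D$ and $0$ for $q\notin \bar D$, while local injectivity plus convexity of $D$ prevents multiple preimages) upgrades this to a global homeomorphism $U:\Omega\to D$.

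The main obstacle is Step 2: one needs the full two-dimensional structural theory of $\sigma$-harmonic functions with $L^\infty$ coefficients (Bers--Nirenberg factorization, quasiconformal change of variables, finite order of critical points) in order to convert the ``line meets convex curve in at most two points'' input of Step 1 into an interior statement. Once this rigidity is available, the convexity of $C$ does all the remaining work and the rest of the proof is essentially topological.
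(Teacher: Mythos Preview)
The paper does not supply its own proof of this lemma; it simply quotes the result from Alessandrini--Nesi \cite{alessandrini_nesi} and immediately moves on. Your proposal is therefore not competing with a proof in the paper but rather sketching the argument of the cited reference, and it does so faithfully: the reduction to linear combinations $v=\alpha u_1+\beta u_2-c$, the ``line meets convex curve in at most two points'' observation, the Bers--Nirenberg/Hartman--Wintner nodal structure to rule out interior critical points, and the degree-theoretic upgrade to a global homeomorphism are exactly the ingredients of the Alessandrini--Nesi proof. In short, your approach is correct and coincides with the one the paper defers to.
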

In \cite{kuchment}, the authors have shown that in 2D, the boundary condition pair $f_D^1=x$ and $f_D^2=y$ satisfies the conditions of Lemma \ref{boundary_conditions} and, thus, the corresponding solutions to (\ref{eq1}) $u_1$ and $u_2$ have no critical points and $\nabla u_1,\nabla u_2$ are not collinear in $\bar{\Omega}$. We will use these boundary conditions for our numerical experiments in Section \ref{sec:numexp}.

Next, we consider the  Fr\'{e}chet differentiability of the  mapping $u(\sigma)$.
\begin{lemma}\label{differentiable_constraint}
The map $u(\sigma)$ defined by (\ref{eq1}) is Fr\'{e}chet differentiable as a mapping from $L_{ad}$ to $H^1_{f_D}(\Omega)$.
\end{lemma}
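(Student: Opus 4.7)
The plan is to identify the Fr\'echet derivative by formally linearizing the state equation with respect to $\sigma$ and then to control the nonlinear remainder. Fix $\sigma \in L_{ad}$ and set $u = u(\sigma)$. For an admissible perturbation $h$ (so that $\sigma + h \in L_{ad}$, in particular $\sigma + h$ is uniformly bounded), differentiating
\[
-\nabla \cdot (e^\sigma \nabla u) = 0
\]
in direction $h$ suggests the linearized boundary value problem
\[
-\nabla \cdot (e^\sigma \nabla w) \;=\; \nabla \cdot (h\, e^\sigma \nabla u) \quad\text{in } \Omega, \qquad w = 0 \text{ on } \Gamma.
\]
The candidate Fr\'echet derivative $u'(\sigma)$ is the linear map $h \mapsto w$.

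First I would check well-posedness of the linearized problem. Since every $\sigma \in L_{ad}$ is bounded, $e^\sigma$ is uniformly elliptic and bounded, so the bilinear form $(v,\varphi) \mapsto \int_\Omega e^\sigma \nabla v \cdot \nabla \varphi$ is coercive and continuous on $H_0^1(\Omega)$. Because $h$ is bounded on $\Omega$ and $\nabla u \in L^2(\Omega;\R^2)$, the right-hand side defines an element of $H^{-1}(\Omega)$. Lax--Milgram then yields a unique $w \in H_0^1(\Omega)$ with $\|w\|_{H^1} \le C\,\|h\,e^\sigma \nabla u\|_{L^2}$, so the map $h \mapsto w$ is linear and continuous.

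Next I would establish the remainder estimate. Set $u_h := u(\sigma + h)$ and $r := u_h - u - w \in H_0^1(\Omega)$. Writing $e^{\sigma+h} - e^\sigma = e^\sigma h + e^\sigma \rho(h)$ with $|\rho(h)| \le C\, h^2$ (valid since $\sigma+h$ stays bounded), subtracting the state equations and using the linearized equation gives
\[
-\nabla \cdot (e^\sigma \nabla r) \;=\; \nabla \cdot \bigl(e^\sigma \rho(h)\, \nabla u_h\bigr) \;+\; \nabla \cdot \bigl(h\, e^\sigma \nabla (u_h - u)\bigr).
\]
A preliminary stability estimate obtained by testing the difference of the state equations against $u_h - u$ itself yields $\|u_h - u\|_{H^1} = O(\|h\|)$ as $h \to 0$. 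Combining this with the quadratic bound on $\rho(h)$, H\"older's inequality, and the two-dimensional Sobolev embedding $H^1(\Omega) \hookrightarrow L^p(\Omega)$ for every finite $p$, I would bound the right-hand side in $H^{-1}(\Omega)$ by $o(\|h\|)$. Coercivity applied to the equation for $r$ then delivers $\|r\|_{H^1} = o(\|h\|)$, which is precisely Fr\'echet differentiability.

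The main obstacle is the cross term $\nabla \cdot (h\, e^\sigma \nabla(u_h - u))$: showing that it is genuinely second order requires both the preliminary continuity of $\sigma \mapsto u(\sigma)$ at a quantitative rate and an integrability pairing between $h$ and $\nabla(u_h - u) \in L^2$. The planar Sobolev embedding supplies the needed $L^p$ control of $h$, but this step is where the nonlinear dependence on $\sigma$ through the exponential must be handled carefully while staying inside the admissible set. Once this piece is in place, the Lax--Milgram step for $w$, the Taylor estimate on $\rho(h)$, and the final inequality on $r$ are routine.
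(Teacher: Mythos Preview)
The paper does not prove this lemma itself; immediately after the statement it writes ``For the proof of this Lemma, we refer to \cite{kuchment}.'' Your linearize-and-bound-the-remainder strategy---identify the candidate derivative via the linearized state equation, invoke Lax--Milgram for its well-posedness, then control the second-order remainder---is precisely the standard argument one finds in such references, so in substance you are supplying what the paper outsources to the citation.

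The one point worth watching is the cross term $\nabla\cdot\bigl(h\,e^\sigma\nabla(u_h-u)\bigr)$: pairing an $H^1$-small $h$ against $\nabla(u_h-u)\in L^2$ and getting $o(\|h\|_{H^1})$ does not close with the planar embedding $H^1\hookrightarrow L^p$ alone, since $\nabla(u_h-u)$ is a priori only in $L^2$. One typically needs either a Meyers-type higher integrability $\nabla u\in L^{2+\varepsilon}$ for the elliptic problem with $L^\infty$ coefficients, or a topology on $\sigma$ that controls $L^\infty$ directly. You have correctly flagged this as the delicate step; the cited reference handles it, so your sketch is sound modulo making that ingredient explicit.
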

For the proof of this Lemma, we refer to \cite{kuchment}. Using Lemma \ref{differentiable_constraint}, we introduce the reduced cost functional
\begin{equation}\label{reduced_func}
\wh{J}(\sigma) = J(\sigma, u_1(\sigma), u_2(\sigma)),
\end{equation}
where $u_i(\sigma)$, $i=1,2$ denotes the unique solution of \eqref{eq1} given $\sigma$ and $f_D^i,i=1,2$. The constrained optimization problem (\ref{min_problem}) can be formulated as an unconstrained one as follows
\begin{equation}\label{reduced_min}
\min_{\sigma\in L_{ad}} \hat{J}(\sigma).
\end{equation}We next investigate the existence of a minimizer to the CDII-SR problem (\ref{min_problem}). We first consider the case when $\delta=0$, i.e., the Perona-Malik term in the functional $J$ is absent. 
\begin{proposition}
Let $f_D^1,f_D^2 \in H^{1/2}(\Omega)$ such that $|\nabla u_1|>0,|\nabla u_2|>0$ and let $\delta=0$. Then there exists a triplet $(\sigma^*,u_1^*,u_2^*) \in L_{ad}\times H^1_{f_D^1}(\Omega)\times H^1_{f_D^2}(\Omega)$ such that $u_i^*, i=1,2$ are solutions to $\mathcal{L}(\sigma,u_i,f^i_D)=0, i=1,2$ and $\sigma^*$ minimizes $\hat{J}$ in $L_{ad}$.
\end{proposition}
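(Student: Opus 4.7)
The plan is to apply the direct method of the calculus of variations. Since $\delta=0$ each term of $\hat J$ is nonnegative, so $m:=\inf_{\sigma\in L_{ad}}\hat J(\sigma)\ge 0$ is finite. First I would pick a minimizing sequence $\{\sigma_n\}\subset L_{ad}$ with $\hat J(\sigma_n)\downarrow m$ and denote the associated states by $u_i^n:=u_i(\sigma_n)$, $i=1,2$.

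Next I would extract convergent subsequences. The pointwise bounds defining $L_{ad}$ give the uniform estimate $\|\sigma_n\|_{L^\infty}\le\max(|\sigma_l|,\sigma_u)$, hence $\{\sigma_n\}$ is bounded in $L^2(\Omega)$ and, by Banach--Alaoglu, a subsequence (not relabeled) satisfies $\sigma_n\rightharpoonup\sigma^*$ weakly in $L^2(\Omega)$. The convex, $L^2$-closed constraints $\sigma_l\le\sigma\le\sigma_u$ pass to the weak limit, so $\sigma^*$ still obeys the pointwise bounds. For the associated states, the uniform ellipticity $0<e^{\sigma_l}\le e^{\sigma_n}\le e^{\sigma_u}<\infty$ together with the Lax--Milgram energy estimate for \eqref{eq1} yields $\|u_i^n\|_{H^1}\le C$ uniformly in $n$; passing to a further subsequence, $u_i^n\rightharpoonup u_i^*$ weakly in $H^1_{f_D^i}(\Omega)$ for $i=1,2$.

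The main obstacle is passing to the limit in the nonlinear state equation. Writing it in weak form,
\[
\int_\Omega e^{\sigma_n}\,\nabla u_i^n\cdot\nabla\varphi\,dx\,dy=0\qquad\forall\,\varphi\in H_0^1(\Omega),
\]
one has a product of a nonlinear function of $\sigma_n$ with $\nabla u_i^n$, and weak-weak convergence alone does not identify the weak limit of the product. To overcome this I would exploit the $H_0^1$-regularity built into $L_{ad}$: using the $L^2$-Tikhonov term together with the $H_0^1$-membership (and Poincar\'e) one can refine the selection to obtain a uniform bound $\|\sigma_n\|_{H_0^1}\le C$, after which Rellich--Kondrachov yields strong $L^2$-convergence $\sigma_n\to\sigma^*$ along a subsequence. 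The uniform $L^\infty$-bound and dominated convergence then give $e^{\sigma_n}\to e^{\sigma^*}$ strongly in every $L^p(\Omega)$, $p<\infty$. Combining this strong convergence with the weak $L^2$-convergence of $\nabla u_i^n$ allows passage to the limit in the weak form, yielding $\mathcal L(u_i^*,\sigma^*,f_D^i)=0$, so $u_i^*=u_i(\sigma^*)$ by uniqueness.

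Finally I would verify weak lower semicontinuity of $\hat J$ along the subsequence. The misfit term is convex and continuous on $L^2$ in the argument $H_j(\sigma_n)=e^{\sigma_n}|\nabla u_i^n|$, and the strong/weak convergence established above gives $H_j(\sigma_n)\rightharpoonup H_j(\sigma^*)$ in $L^2$, so weak lower semicontinuity applies; the Tikhonov term $\tfrac{\beta}{2}\|\sigma\|_{L^2}^2$ is weakly $L^2$-lower semicontinuous as a convex continuous functional; and $\gamma\|\sigma\|_{L^1}$ is also weakly $L^2$-lower semicontinuous on the bounded domain $\Omega$. Summing, $\hat J(\sigma^*)\le\liminf_n \hat J(\sigma_n)=m$, hence $\sigma^*$ attains the infimum and $(\sigma^*,u_1^*,u_2^*)$ is the required triplet. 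The hardest step will be the compactness upgrade needed to pass the nonlinear coefficient $e^{\sigma_n}$ to the limit, since the cost $\hat J$ with $\delta=0$ does not directly penalize $\nabla\sigma$ and the argument must lean on the $H_0^1$-structure of $L_{ad}$ itself.
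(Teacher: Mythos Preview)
Your approach mirrors the paper's proof closely: both run the direct method, extract a weakly convergent minimizing subsequence, upgrade to strong $L^2$-convergence of $\sigma_n$ via Rellich--Kondrachov, pass to the limit in the weak form of the state equation using a strong--weak product argument, and conclude by weak lower semicontinuity. In several places you are more careful than the paper---you correctly track the nonlinear coefficient $e^{\sigma_n}$ (the paper's proof writes $\sigma^{m_l}\nabla u_i^{m_l}$, which appears to be a slip), and you spell out the lower-semicontinuity of each term of $\hat J$ separately.

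There is, however, one step where your justification fails. You write that ``the $L^2$-Tikhonov term together with the $H_0^1$-membership (and Poincar\'e)'' yields a uniform bound $\|\sigma_n\|_{H_0^1}\le C$. This does not work: the Tikhonov term $\tfrac{\beta}{2}\|\sigma\|_{L^2}^2$ controls only $\|\sigma_n\|_{L^2}$, and Poincar\'e's inequality $\|\sigma\|_{L^2}\le C\|\nabla\sigma\|_{L^2}$ goes in the wrong direction---it cannot manufacture a gradient bound from an $L^2$ bound. Mere membership $\sigma_n\in H_0^1(\Omega)$ provides no \emph{uniform} control on $\|\nabla\sigma_n\|_{L^2}$, and the pointwise box constraints $\sigma_l\le\sigma_n\le\sigma_u$ say nothing about gradients either. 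Without this bound the Rellich--Kondrachov step is unavailable and the strong $L^2$-convergence of $\sigma_n$---which you rightly flag as the crux---does not follow. The paper's own proof is equally silent here (it simply asserts weak convergence ``in $L_{ad}$'' and then invokes compact embedding), so the gap is inherited rather than introduced; but your explicit argument for the $H_0^1$ bound is incorrect, not merely incomplete.
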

\begin{proof}
Boundedness from below of $\hat{J}$ guarantees the existence of a minimizing sequence $(\sigma^m)$. Since $L_{ad}$ is reflexive and $\hat{J}$ is sequentially weakly lower semi-continuous,
this sequence is bounded. Therefore it contains a weakly convergent 
subsequence $(\sigma^{m_l})$ in $L_{ad}$, $\sigma^{m_l} \rightharpoonup \sigma^*$. 
Correspondingly, the sequence $(u_1^{m_l},u_2^{m_l})$, where $u_i^{m_l}=u_i(\sigma^{m_l})$, is bounded in $ H^1_{f_D^1}(\Omega)\times H^1_{f_D^2}(\Omega) $. Therefore the sequence 
converges weakly to $(u_1^*,u_2^*)$. Now, using the Rellich Kondrachev compactness theorem in $\mathbb{R}^2$, we have that $L_{ad}$ is compactly embedded in $L^2(\Omega)$. This results in a strong convergence of the subsequence $\sigma^{m_l}$ in $ L^2(\Omega)$ to $\sigma^*$. We, now, consider the weak formulation of the solutions of the elliptic problem (\ref{eq1}) and, thus, focus on $\langle \nabla \cdot (\sigma^{m_l}\nabla u_i^{m_l}), \psi \rangle_{L^2(\Omega)}$ for any $\psi \in H^1_0(\Omega)$. Using integration by parts, we have $\langle \nabla \cdot (\sigma^{m_l}\nabla u_i^{m_l}), \psi \rangle_{L^2(\Omega)} =-\langle \sigma^{m_l}\nabla u_i^{m_l}, \nabla\psi \rangle_{L^2(\Omega)}$. From the above discussion, the sequence of products $\sigma^{m_l}\nabla u_i^{m_l}$ is weakly convergent in $L^2(\Omega)$, that is, $\langle \sigma^{m_l}\nabla u_i^{m_l}, \nabla \psi \rangle_{L^2(\Omega)} \to 
\langle \sigma^* \nabla u_i^* , \nabla \psi \rangle_{L^2(\Omega)}$. With this preparation and using the continuity of the maps $u_i(\sigma)$, it follows that $(u_1^*,u_2^*)= (u_1(\sigma^*),u_2(\sigma^*))$, and the triplet $(\sigma^*,u_1^*,u_2^*)$ minimizes the objective $\hat{J}$. 
\end{proof}
In the case $\delta\neq 0$, we first note that the function $\log(1+z^2)$ is not convex. Therefore the PM functional, and, hence, the functional $\hat{J}$ in (\ref{functional}) is not weakly lower semi-continuous on $W^{1,p}(\Omega)$ for any $1 < p < \infty$. Nevertheless, $\hat{J}$ is a bounded below, lower semi-continuous Lipschitz functional, for which a minimizer exists, provided that $L_{ad}$ is compact.

\subsection{Characterization of local minima}
To characterize the solution of our optimization problem through first-order optimality conditions, we write the reduced functional $\hat{J}$ as 
$$
\hat{J} = \hat{J}_1+\hat{J}_2,~ J_i:L_{ad}\rightarrow\mathbb{R}^+,~ i=1,2,
$$
 where
\begin{equation}\label{sumoffunctionals}
\begin{aligned}
&\hat{J}_1(\sigma) = \dfrac{\alpha_1}{2}\|e^\sigma|\nabla u_1|-g_1^{\delta}\|^2_{L^{2}(\Omega)} + \dfrac{\alpha_2}{2}\|e^\sigma|\nabla u_2|-g_2^{\delta}\|^2_{L^{2}(\Omega)} +\dfrac{\beta}{2} \|\sigma\|^2_{L^2(\Omega)}\\
&+\dfrac{\delta}{2}\int_\Omega \ln(1+|\nabla \sigma(x,y)|^2)~dxdy,\\
&\hat{J}_2(\sigma) = \gamma \|\sigma\|_{L^{1}(\Omega)}.
\end{aligned}
\end{equation}

\begin{remark}
The functional $\hat{J}_1$ is smooth and possibly non-convex, while $\hat{J}_2$ is non-smooth and convex.

\end{remark}

We next state some properties of the reduced functional $\hat{J}_1(\sigma)$ which can be proved using the arguments in \cite[Lemma 3.1]{kuchment}.
\begin{proposition}\label{diff_J1}
The reduced functional $\hat{J}_1(\sigma)$ is weakly lower semi-continuous, bounded below and Fr\'{e}chet differentiable.
\end{proposition}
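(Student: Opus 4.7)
The plan is to verify the three properties separately. Boundedness from below is immediate: each of the four summands in \eqref{sumoffunctionals} is non-negative (the first three are squared $L^2$-norms, and the integrand $\log(1+|\nabla\sigma|^2)$ is non-negative since $|\nabla\sigma|^2\ge 0$), so $\hat J_1(\sigma)\ge 0$ for every $\sigma\in L_{ad}$.

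For Fréchet differentiability I would decompose $\hat J_1$ and differentiate each summand by the chain rule. Lemma \ref{differentiable_constraint} supplies the Fréchet differentiability of the solution map $\sigma\mapsto u_i(\sigma)$ from $L_{ad}$ into $H^1_{f_D^i}(\Omega)$. On the bounded set $L_{ad}\subset L^\infty(\Omega)$ the Nemytskii operator $\sigma\mapsto e^\sigma$ is smooth into every $L^p(\Omega)$, so composing with $|\nabla u_i(\sigma)|$ (which is nowhere zero under the boundary data of Lemma \ref{boundary_conditions}) and with the smooth quadratic discrepancy yields differentiability of the two fidelity terms, with directional derivative involving the sensitivity $u_i'(\sigma)[h]$. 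The quadratic $L^2$ regularization is trivially smooth. For the Perona--Malik term, the integrand is $C^2$ in the gradient variable with derivative $2p/(1+|p|^2)$, which is bounded uniformly in $p$, so a standard Nemytskii argument delivers Fréchet differentiability from $H^1_0(\Omega)$ to $\R$, with $\langle J_{PM}'(\sigma),h\rangle=\int_\Omega \frac{2\nabla\sigma\cdot\nabla h}{1+|\nabla\sigma|^2}\,dx\,dy$.

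For weak lower semi-continuity, let $\sigma_n\rightharpoonup\sigma$ in $H^1_0(\Omega)$. By Rellich--Kondrachev, after passing to a subsequence, $\sigma_n\to\sigma$ strongly in $L^2(\Omega)$ and pointwise almost everywhere. The $L^2$ penalty is weakly lsc as a convex continuous functional. The two fidelity terms are in fact continuous under this convergence: the uniform $L^\infty$ bound inherent to $L_{ad}$ together with dominated convergence gives $e^{\sigma_n}\to e^\sigma$ in every $L^p$, and the continuity of $\sigma\mapsto u_i(\sigma)$ implicit in Lemma \ref{differentiable_constraint} delivers strong $H^1$-convergence of the states; hence $e^{\sigma_n}|\nabla u_i^n|\to e^\sigma|\nabla u_i|$ in $L^2(\Omega)$ and these two summands converge in value.

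The main obstacle is the Perona--Malik term, because $\log(1+|p|^2)$ is non-convex in $p$ and the classical Tonelli/Serrin/Ioffe criteria for weak lsc of gradient-integral functionals therefore do not apply. Mere weak $H^1_0$ convergence only yields weak $L^2$-convergence of $\nabla\sigma_n$, which is insufficient to pass to the limit in a non-convex integrand, and indeed the paragraph following the proposition flags exactly this subtlety. I would follow the strategy sketched in \cite[Lemma 3.1]{kuchment} and exploit additional compactness of $L_{ad}$ (through the pointwise $L^\infty$ bound combined with whatever extra regularity is available along the minimizing sequence) to extract a subsequence on which $\nabla\sigma_n\to\nabla\sigma$ almost everywhere, and then apply Fatou's lemma to the non-negative integrand to conclude $\liminf_n J_{PM}(\sigma_n)\ge J_{PM}(\sigma)$. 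Making this extraction rigorous in the precise topology in which weak lsc is claimed is the delicate step, and is the part I expect to require the most care.
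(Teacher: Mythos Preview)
The paper does not actually prove this proposition; it merely asserts that it ``can be proved using the arguments in \cite[Lemma 3.1]{kuchment}''. Your treatment of boundedness below and of Fr\'echet differentiability is therefore already more detailed than what the paper supplies, and those two parts are fine.

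The weak lower semi-continuity claim, however, has a genuine problem that you have correctly sensed but not resolved. You propose to extract a subsequence along which $\nabla\sigma_n\to\nabla\sigma$ almost everywhere and then invoke Fatou. But weak $H^1_0$-convergence yields only weak $L^2$-convergence of the gradients, and the pointwise bounds defining $L_{ad}$ constrain $\sigma$, not $\nabla\sigma$; there is no mechanism here to upgrade to a.e.\ convergence of gradients. In fact the paper itself, in the paragraph immediately preceding Section~3.1, states explicitly that because $\log(1+z^2)$ is not convex, the PM functional and hence $\hat J$ ``is not weakly lower semi-continuous on $W^{1,p}(\Omega)$ for any $1<p<\infty$''. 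Since $\hat J_2=\gamma\|\cdot\|_{L^1}$ is convex and weakly lsc, this is flatly inconsistent with the assertion that $\hat J_1=\hat J-\hat J_2$ is weakly lsc. The cited reference \cite{kuchment} contains no Perona--Malik term, so the citation cannot cover this point either. In short, the weak lsc portion of the proposition, as stated for $\delta>0$, conflicts with the paper's own discussion, and your proposed compactness-then-Fatou route cannot repair it without an additional hypothesis giving strong convergence of the gradients.
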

We now define the subdifferential of a non-smooth functional.

\begin{definition}[Subdifferential]
If $\hat{J}$ is finite at a point $\sigma$, the Fréchet subdifferential of $\hat{J}$ at $\sigma$ is defined as follows \cite{ekeland}

\begin{equation}\label{subdifferential}
\partial \hat{J}(\bar{\sigma}):=\Bigg\lbrace{\phi\in L^*_{ad}:\liminf_{\sigma\rightarrow \bar{\sigma}}\dfrac{\hat{J}(\sigma)-\hat{J}(\bar{\sigma})-\langle\phi,\sigma-\bar{\sigma}\rangle}{\|\bar{\sigma}-\sigma\|_2}}\geq 0\Bigg\rbrace,
\end{equation}
where $L^*_{ad}$ is the dual space of $L_{ad}$. An element $\phi\in \partial \hat{J}(\sigma)$ is called a subdifferential of $\hat{J}$ at $\sigma$.
\end{definition}

In our setting, we have the following
\[
\partial\hat{J}(\sigma) = \nabla \hat{J}_1(\sigma)+\partial \hat{J}_2(\sigma),
\]
since $\hat{J}_1$ is Fr\'{e}chet differentiable by Prop. \ref{diff_J1}. Moreover, for each $\alpha >0$, it holds that 
\[
\partial(\alpha \hat{J}) = \alpha \partial \hat{J}.
\]
The following proposition gives a necessary condition for a local minimum of $\hat{J}$ (see \cite{Roy_AET2}).
\begin{proposition}[Necessary condition]
If $\hat{J}=\hat{J}_1+\hat{J}_2$, with $\hat{J}_1, \hat{J}_2$ given by (\ref{sumoffunctionals}), attains a local minimum at $\sigma^*\in L_{ad}$, then
\[
0\in \partial \hat{J}(\sigma^*),
\]
or equivalently
\[
-\nabla \hat{J}_1(\sigma^*)\in \partial \hat{J}_2(\sigma^*).
\]
\end{proposition}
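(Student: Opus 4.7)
The plan is to derive the necessary condition directly from the definition of the Fréchet subdifferential and then translate it using the sum rule indicated in the preceding discussion. First I would fix $\sigma^* \in L_{ad}$ as a local minimum of $\hat{J}$ and exploit that, by definition, there exists a neighborhood $U$ of $\sigma^*$ on which $\hat{J}(\sigma) \geq \hat{J}(\sigma^*)$ for every admissible $\sigma \in U$. Plugging $\phi = 0$ into the defining quotient of \eqref{subdifferential}, the numerator $\hat{J}(\sigma) - \hat{J}(\sigma^*) - \langle 0, \sigma - \sigma^* \rangle = \hat{J}(\sigma) - \hat{J}(\sigma^*)$ is nonnegative for $\sigma$ sufficiently close to $\sigma^*$, so
\[
\liminf_{\sigma \to \sigma^*} \frac{\hat{J}(\sigma) - \hat{J}(\sigma^*)}{\|\sigma - \sigma^*\|_2} \geq 0,
\]
which is precisely $0 \in \partial \hat{J}(\sigma^*)$.

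Next I would pass from this abstract condition to the splitting form by invoking the subdifferential sum rule stated just before the proposition, namely $\partial \hat{J}(\sigma) = \nabla \hat{J}_1(\sigma) + \partial \hat{J}_2(\sigma)$, which holds here because $\hat{J}_1$ is Fréchet differentiable by Proposition \ref{diff_J1} (so its Fréchet subdifferential at any point is the singleton containing its gradient) while $\hat{J}_2$ is convex and lower semi-continuous (so its Fréchet subdifferential coincides with the convex subdifferential). Applying this decomposition at $\sigma^*$ to the inclusion $0 \in \partial \hat{J}(\sigma^*)$ yields $0 \in \nabla \hat{J}_1(\sigma^*) + \partial \hat{J}_2(\sigma^*)$, which rearranges to $-\nabla \hat{J}_1(\sigma^*) \in \partial \hat{J}_2(\sigma^*)$, establishing the equivalent formulation.

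The step I expect to carry the most weight is the sum rule itself: in general, Fréchet subdifferentials do not decompose additively without some regularity assumption. Here regularity is automatic because one summand is smooth (its Fréchet subdifferential reduces to the usual derivative, for which additivity is trivial from the defining quotient), so the main work is a short calculation verifying that the limit inferior of the combined quotient splits into a smooth first-order part plus the quotient for $\hat{J}_2$. I would present this as a brief lemma-style justification before concluding, since the rest of the argument is essentially a direct substitution into the definition. Admissibility of test directions $\sigma - \sigma^*$ inside $L_{ad}$ would need a brief remark, but since $L_{ad}$ is a convex set containing a neighborhood basis of interior directions (or otherwise the subdifferential is understood relative to $L_{ad}$), no further subtlety arises.
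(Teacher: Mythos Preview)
Your argument is correct: verifying $0 \in \partial\hat J(\sigma^*)$ directly from the definition with $\phi=0$, then invoking the sum rule $\partial\hat J = \nabla\hat J_1 + \partial\hat J_2$ already recorded in the paper, is exactly the standard route. The paper itself does not supply a proof here but simply cites \cite{Roy_AET2}, so your derivation is consistent with (and in fact more explicit than) what the paper offers.
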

The following variational inequality holds for each $\lambda\in \partial \hat{J}_2(\sigma^*)$ (see \cite{stadler}).
\begin{equation}\label{var_ineq}
\langle\nabla \hat{J}_1(\sigma^*)+\lambda,\sigma-\sigma^*\rangle \geq 0,\qquad \forall \sigma \in L_{ad}.
\end{equation}
Using the definition of $\hat{J}_2$ in (\ref{sumoffunctionals}) and the fact that $L_{ad}$ is reflexive, the inclusion $\lambda\in\partial \hat{J}_2(\sigma^*)$ gives the following characterization of space of $\lambda$
\[
\lambda \in \Lambda_{ad}:=\lbrace \lambda \in L^2(\Omega): 0\leq\lambda\leq \gamma,\mbox{ a.e. in } \Omega\rbrace.
\]

A pointwise analysis of the variational inequality (\ref{var_ineq}) leads to the existence of a non-negative functions $\lambda_{\sigma_l}^*,\lambda_{\sigma_u}^*\in L^2(\Omega)$ that correspond to Lagrange multipliers for the inequality constraints in $L_{ad}$. We, thus, have the following first-order optimality system.

\begin{proposition}[First-order necessary conditions]\label{necessary}
The optimal solution of the minimization problem (\ref{reduced_min}) can be characterized by the existence of $(\lambda^*,\lambda_{\sigma_l}^*,\lambda_{\sigma_u}^*)\in \Lambda_{ad}\times L^2(\Omega)\times L^2(\Omega)$ such that 
\begin{eqnarray}
&&\label{grad1}\nabla_\sigma \hat{J}_1(\sigma^*) + \lambda^*+\lambda_{\sigma_u}^*-\lambda_{\sigma_l}^*=0,\\
&&\label{comp_st} \lambda_{\sigma_u}^* \geq 0,~ \sigma_u-\sigma^*\geq 0,~\langle \lambda_{\sigma_u}^*,\sigma_u-\sigma^* \rangle=0,\\ 
&&\lambda_{\sigma_l}^* \geq 0,~ \sigma^*-\sigma_l\geq 0,~\langle \lambda_{\sigma_l^*},\sigma^*-\sigma_l \rangle=0,\\ 
&&\label{4}\lambda^*=\gamma \mbox{ a.e. on } \lbrace x\in\Omega:\sigma^*(x) > 0 \rbrace,\\
&& \label{comp_end} 0\leq\lambda^*\leq \gamma \mbox{ a.e. on } \lbrace x\in\Omega:\sigma^*(x) = 0 \rbrace.
\end{eqnarray}

The conditions (\ref{comp_st})-(\ref{comp_end}) are known as the complementarity conditions for $(\sigma^*,\lambda^*)$.
\end{proposition}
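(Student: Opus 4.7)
The plan is to derive the pointwise conditions \eqref{grad1}--\eqref{comp_end} from the abstract inclusion $-\nabla \hat{J}_1(\sigma^*)\in \partial \hat{J}_2(\sigma^*)$ already established in the preceding proposition, through two successive reductions: a pointwise analysis of the $L^1$ subdifferential that produces $\lambda^*\in\Lambda_{ad}$ satisfying \eqref{4}--\eqref{comp_end}, and a Lagrange-multiplier treatment of the box constraints built into $L_{ad}$ that produces $\lambda_{\sigma_l}^*,\lambda_{\sigma_u}^*$ satisfying the complementarity relations \eqref{comp_st} and upgrades the variational inequality \eqref{var_ineq} to the equation \eqref{grad1}.

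First, I would fix a $\lambda^*\in\partial \hat{J}_2(\sigma^*)$ realizing the inclusion and invoke the pointwise characterization of the subdifferential of $\gamma\|\cdot\|_{L^1(\Omega)}$: for a.e.\ $x\in\Omega$, $\lambda^*(x)\in \gamma\,\partial|\sigma^*(x)|$. Combined with the sign convention adopted in the admissible multiplier set $\Lambda_{ad}$, this forces $\lambda^*(x)=\gamma$ wherever $\sigma^*(x)>0$ and $\lambda^*(x)\in[0,\gamma]$ wherever $\sigma^*(x)=0$, which is exactly \eqref{4} and \eqref{comp_end}; measurability is inherited from $\sigma^*\in L^2(\Omega)$, so $\lambda^*\in\Lambda_{ad}$ as claimed.

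Next, I would peel off the two-sided constraint in $L_{ad}$ by localizing the variational inequality \eqref{var_ineq} on the three disjoint regions $\{\sigma^*=\sigma_l\}$, $\{\sigma_l<\sigma^*<\sigma_u\}$, and $\{\sigma^*=\sigma_u\}$. On the inactive set, feasible test variations $v$ of either sign with support in this region are admissible, so the variational inequality collapses to $\nabla_\sigma \hat{J}_1(\sigma^*)+\lambda^*=0$ a.e.\ there. On the upper-active set only downward variations are feasible, so the residual $-(\nabla_\sigma \hat{J}_1(\sigma^*)+\lambda^*)$ is nonnegative a.e.; calling this quantity $\lambda_{\sigma_u}^*$ gives a nonnegative $L^2$ function supported in $\{\sigma^*=\sigma_u\}$, so that $\langle\lambda_{\sigma_u}^*,\sigma_u-\sigma^*\rangle=0$. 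A symmetric argument on the lower-active set yields $\lambda_{\sigma_l}^*\ge 0$ supported in $\{\sigma^*=\sigma_l\}$ with the analogous complementarity. Gluing the three regions then produces \eqref{grad1} as an identity in $L^2(\Omega)$ together with the complementarity conditions \eqref{comp_st}.

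The hard part will be justifying that $\nabla_\sigma \hat{J}_1(\sigma^*)$, together with the multipliers $\lambda_{\sigma_l}^*,\lambda_{\sigma_u}^*$, actually lies in $L^2(\Omega)$ rather than in a strictly weaker dual, since the argument above manipulates these objects pointwise. The Perona--Malik contribution $\tfrac{\delta}{2}\int_\Omega\log(1+|\nabla\sigma|^2)\,dx\,dy$ has Fr\'echet derivative of the form $-\delta\,\nabla\cdot\!\bigl(\nabla\sigma^*/(1+|\nabla\sigma^*|^2)\bigr)$, which a priori sits only in $H^{-1}(\Omega)$; likewise the data-fidelity contribution involves the adjoint states solving elliptic equations with $L^2$ right-hand side. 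Promoting these terms to $L^2(\Omega)$ will require an elliptic regularity argument exploiting the $L^\infty$ bounds inherent in $L_{ad}$, the nondegeneracy $|\nabla u_i|>0$ guaranteed by Lemma \ref{boundary_conditions} for the boundary data $f_D^1,f_D^2$, and the boundedness of the flux $\nabla\sigma^*/(1+|\nabla\sigma^*|^2)$. Once this $L^2$ identification is in place, the pointwise arguments above run through without further difficulty.
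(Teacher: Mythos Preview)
Your approach is exactly what the paper does, only spelled out in far more detail: the paper's entire justification for this proposition is the single sentence ``A pointwise analysis of the variational inequality \eqref{var_ineq} leads to the existence of non-negative functions $\lambda_{\sigma_l}^*,\lambda_{\sigma_u}^*\in L^2(\Omega)$ that correspond to Lagrange multipliers for the inequality constraints in $L_{ad}$,'' followed immediately by the statement of the proposition. Your two-step reduction (pointwise $L^1$ subdifferential for $\lambda^*$, then active/inactive decomposition for the box multipliers) is precisely that pointwise analysis made explicit, and the regularity concern you flag about the Perona--Malik and adjoint contributions to $\nabla_\sigma\hat J_1$ is a genuine gap that the paper simply does not address---it writes the gradient formula \eqref{L2_gradients} as an $L^2$ expression without comment.
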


To determine the gradient $\nabla_\sigma \hat{J}_1$, we use the adjoint approach (see for e.g., \cite{ RoyAnnunziatoBorzi2016,Roy2018}). This gives the following reduced gradient of $\hat{J}_1$ 
\begin{equation}\label{L2_gradients}
\begin{aligned}
\nabla_\sigma \hat{J}_1(\sigma^*)=&\alpha_1(e^{\sigma^*} |\nabla u_1|-g_1^\delta)|\nabla u_1|+\alpha_2(e^{\sigma^*} |\nabla u_2|-g_2^\delta)|\nabla u_2| + \nabla u_1 \cdot \nabla v_1 + \nabla u_2 \cdot \nabla v_2 + \beta \sigma^*\\
&-\delta\nabla\cdot \Bigg(\dfrac{\nabla \sigma^*}{1+|\nabla\sigma^*|^2}\Bigg)
\end{aligned}
\end{equation}
where $u_1,u_2$ satisfy the forward equations $\mathcal{L}(u_1,\sigma^*,f_D^1)=0, ~\mathcal{L}(u_2,\sigma^*,f_D^2)=0$, respectively, and $v_1,v_2$ satisfy
the adjoint equations

\begin{equation}\label{adj_1}
\begin{aligned}
-\nabla\cdot(e^{\sigma^*}\nabla{v_1})&=\alpha_1\nabla\cdot \Bigg [e^{\sigma^*}(e^{\sigma^*}|\nabla u_1|-g_1^{\delta})\sgn(\nabla u_1)\Bigg ]~ \mbox{in }\Omega, \\
v_1|_{\Gamma}&=0,\\
\end{aligned}
\end{equation}

\begin{equation}\label{adj_2}
\begin{aligned}
-\nabla\cdot(e^{\sigma^*}\nabla{v_2})&=\alpha_2\nabla\cdot \Bigg [e^{\sigma^*}(e^{\sigma^*}|\nabla u_2|-g_2^{\delta})\sgn(\nabla u_2)\Bigg ]~ \mbox{in }\Omega, \\
v_2|_{\Gamma}&=0.\\
\end{aligned}
\end{equation}

The complementarity conditions (\ref{comp_st})-(\ref{comp_end}) can be rewritten in a compact form as follows. Define 
\begin{equation}\label{mu}
~\mu^* = \lambda^*+\lambda_{\sigma_u}^*-\lambda_{\sigma_l}^*.
\end{equation}
Then the triplet $(\lambda^*,\lambda_{\sigma_l}^*,\lambda_{\sigma_u}^*)$ is obtained by solving the following equations
\begin{equation}\label{lambda_comp}
\begin{aligned}
&\lambda^* = \min(\gamma,\max(0,\mu^*)),\\
&\lambda_{\sigma_l}^* = -\min(0,\mu^*+\gamma),\\
&\lambda_{\sigma_u}^* = \max(0,\mu^*-\gamma),\\
\end{aligned}
\end{equation}
(see \cite{stadler}).
For each $k\in\R^+$, define the following quantity
$$
\begin{aligned}
E(\sigma^*,\mu^*) = \sigma^*&-\max\lbrace 0,\sigma^*+k(\mu^*-\gamma)\rbrace+\max\lbrace 0,\sigma^*-\sigma_u+k(\mu^*-\gamma)\rbrace\\
&-\min\lbrace 0,\sigma^*+k(\mu^*+\gamma)\rbrace+\min\lbrace 0,\sigma^*-\sigma_l+k(\mu^*+\gamma)\rbrace.\\
\end{aligned}
$$

The following lemma determines the complementarity conditions (\ref{comp_st})-(\ref{comp_end}) in terms of $E$ (see \cite[Lemma 2.2]{stadler}).

\begin{lemma}\label{Comp_conditions}
The complementarity conditions (\ref{comp_st})-(\ref{comp_end}) are equivalent to the following
\begin{equation}\label{comp_equality}
E(\sigma^*,\mu^*)=0,
\end{equation}
where $\mu$ is defined in (\ref{mu}). 
\end{lemma}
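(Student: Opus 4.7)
My plan is to recognize $E(\sigma^*,\mu^*)$ as the residual of a proximal fixed-point map and then reduce the equivalence to a pointwise scalar case analysis. Using the elementary identities $\min(b,\max(0,t)) = \max(0,t) - \max(0,t-b)$ for $b \ge 0$ and $\max(a,\min(0,t)) = \min(0,t) - \min(0,t-a)$ for $a \le 0$, I would first reassemble the four max/min terms in the definition of $E$ to obtain the compact form
\begin{equation*}
E(\sigma^*,\mu^*) \;=\; \sigma^* \;-\; \Pi_{L_{ad}}\bigl(\Sc_{k\gamma}(\sigma^* + k\mu^*)\bigr),
\end{equation*}
where $\Pi_{L_{ad}}$ is the pointwise projection onto $[\sigma_l,\sigma_u]$ and $\Sc_{k\gamma}(t) := \max(0, t-k\gamma) + \min(0, t+k\gamma)$ is the scalar soft-thresholding operator. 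Note that the two summands defining $\Sc_{k\gamma}$ cannot simultaneously be nonzero since $k\gamma > 0$. In this form the equation $E(\sigma^*,\mu^*) = 0$ reads as the standard proximal fixed-point relation
\begin{equation*}
\sigma^* \;=\; \mathrm{prox}_{k(\gamma|\cdot| + I_{L_{ad}})}\bigl(\sigma^* + k\mu^*\bigr),
\end{equation*}
which, for any $k > 0$, is equivalent to the subgradient inclusion $\mu^* \in \partial(\gamma|\cdot|)(\sigma^*) + N_{[\sigma_l,\sigma_u]}(\sigma^*)$.

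Since $\Pi_{L_{ad}}$ and $\Sc_{k\gamma}$ act pointwise, the equivalence with the complementarity conditions (\ref{comp_st})--(\ref{comp_end}) can be checked at almost every point of $\Omega$. I would split into the five cases $\sigma^*(x) \in \{\sigma_l\}$, $(\sigma_l,0)$, $\{0\}$, $(0,\sigma_u)$, $\{\sigma_u\}$, which exhaust the admissible range since $\sigma_l < 0 < \sigma_u$. In each case the sign of $\sigma^* + k(\mu^*\pm\gamma)$ is forced by the identity $E = 0$, and comparing these signs with the explicit closed-form expressions (\ref{lambda_comp}) for $\lambda^*, \lambda_{\sigma_l}^*, \lambda_{\sigma_u}^*$ immediately yields the nonnegativity, orthogonality, and sparsity statements in (\ref{comp_st})--(\ref{comp_end}). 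For the converse direction, starting from the multipliers $(\lambda^*,\lambda_{\sigma_l}^*,\lambda_{\sigma_u}^*)$ satisfying the complementarity system, the same case analysis on $\sigma^*(x)$ shows that $\mu^* = \lambda^* + \lambda_{\sigma_u}^* - \lambda_{\sigma_l}^*$ has the sign pattern that makes each max/min in $E$ collapse, giving $E = 0$.

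The step I expect to be the most delicate is the case $\sigma^*(x) = 0$ with $0$ in the interior of $[\sigma_l,\sigma_u]$, since then both box multipliers vanish and $E = 0$ must reduce to $\Sc_{k\gamma}(k\mu^*) = 0$, i.e.\ $|\mu^*| \le \gamma$; this range must be matched against the admissible range $[0,\gamma]$ for $\lambda^*$ prescribed in (\ref{comp_end}) and (\ref{lambda_comp}). A symmetric but simpler check is required when $\sigma^*(x)$ saturates a box constraint, where $\lambda^*$ is fixed at its boundary value by (\ref{4}) and the corresponding box multiplier absorbs any residual slack so that the projection formula gives the correct active coordinate. Once these cases are verified, the equivalence follows. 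The underlying pointwise algebra is precisely the one used in the projection-based proof of \cite[Lemma 2.2]{stadler}, to which we refer for the remaining routine computations.
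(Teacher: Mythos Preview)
The paper does not give a proof of this lemma; it merely cites \cite[Lemma~2.2]{stadler}. Your proposal correctly unpacks the proximal fixed-point interpretation underlying that citation, carries out the pointwise case analysis, and defers the remaining algebra to the same reference, so it is an expanded version of exactly the approach the paper relies on.
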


Using the gradients in (\ref{L2_gradients}) and Lemma \ref{Comp_conditions}, the optimality conditions (\ref{grad1})-(\ref{comp_end}) for the CDII-SR problem can be rewritten as follows 
\begin{proposition}\label{nec_opt}
A local minimizer $(u_1, u_2,\sigma^*)$ of the problem (\ref{min_problem}) can be characterized by the existence of $(v_1,v_2,\mu^*)\in H^1_0(\Omega)\times H^1_0(\Omega)\times L_{ad}$, such that the following system is satisfied
\begin{equation}\label{f1}
\begin{aligned}
-\nabla\cdot(e^{\sigma^*}\nabla{u_1})&=0~ \mbox{in }\Omega, \\
u_1|_{\Gamma}&=f^1_D,\\
-\nabla\cdot(e^{\sigma^*}\nabla{v_1})&=\alpha_1\nabla\cdot \Bigg [e^{\sigma^*}(e^{\sigma^*}|\nabla u_1|-g_1^{\delta})\sgn(\nabla u_1)\Bigg ]~ \mbox{in }\Omega, \\
v_1|_{\Gamma}&=0,\\
-\nabla\cdot(e^{\sigma^*}\nabla{u_2})&=0~ \mbox{in }\Omega, \\
u_2|_{\Gamma}&=f^2_D,\\
-\nabla\cdot(e^{\sigma^*}\nabla{v_2})&=\alpha_2\nabla\cdot \Bigg [e^{\sigma^*}(e^{\sigma^*}|\nabla u_2|-g_2^{\delta})\sgn(\nabla u_2)\Bigg ]~ \mbox{in }\Omega, \\
v_2|_{\Gamma}&=0,\\
\alpha_1(e^{\sigma^*} |\nabla u_1|-g_1^\delta)e^{\sigma^*}|\nabla u_1|+\alpha_2(e^{\sigma^*} |\nabla u_2|-g_2^\delta )e^{\sigma^*}|\nabla u_2| &+\nabla u_1 \cdot \nabla v_1 + \nabla u_2 \cdot \nabla v_2 + \beta \sigma^*\\
&-\delta\nabla\cdot \Bigg(\dfrac{\nabla \sigma^*}{1+|\nabla\sigma^*|^2}\Bigg)+ \mu^*=0,~ \mbox{a.e. in }\Omega,\\
E(\sigma^*,\mu^*)&=0 , \qquad \mbox{a.e. in }\Omega.\\
\end{aligned}
\end{equation}

\end{proposition}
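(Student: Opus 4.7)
The plan is to assemble the system (\ref{f1}) by combining three ingredients already in place in the paper: (i) the abstract necessary condition of Proposition \ref{necessary}, (ii) the explicit reduced-gradient formula (\ref{L2_gradients}) derived via the adjoint calculus, and (iii) the compactification of the complementarity conditions furnished by Lemma \ref{Comp_conditions}.

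First I would apply Proposition \ref{necessary} to the local minimizer $\sigma^*\in L_{ad}$, which yields non-negative multipliers $(\lambda^*,\lambda_{\sigma_l}^*,\lambda_{\sigma_u}^*)\in\Lambda_{ad}\times L^2(\Omega)\times L^2(\Omega)$ satisfying the gradient equation (\ref{grad1}) and the complementarity conditions (\ref{comp_st})--(\ref{comp_end}). Defining $\mu^*:=\lambda^*+\lambda_{\sigma_u}^*-\lambda_{\sigma_l}^*$ as in (\ref{mu}) collapses (\ref{grad1}) to $\nabla_\sigma\hat{J}_1(\sigma^*)+\mu^*=0$ a.e.\ in $\Omega$, and Lemma \ref{Comp_conditions} then replaces the three-line complementarity block by the single pointwise identity $E(\sigma^*,\mu^*)=0$.

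Second I would justify the gradient expression (\ref{L2_gradients}) through the standard adjoint argument. Form the Lagrangian
\[
\mathcal{L}(\sigma,u_1,u_2,v_1,v_2)=\hat{J}_1(\sigma,u_1,u_2)+\sum_{i=1}^2\int_\Omega e^\sigma\,\nabla u_i\cdot\nabla v_i\,dxdy,
\]
with $v_1,v_2\in H^1_0(\Omega)$. The Fréchet differentiability granted by Lemma \ref{differentiable_constraint} allows differentiating $\mathcal{L}$ with respect to $u_i$ in an arbitrary direction $\phi\in H^1_0(\Omega)$; the vanishing of this derivative is the weak form of the adjoint equations (\ref{adj_1})--(\ref{adj_2}). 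Differentiating $\mathcal{L}$ with respect to $\sigma$ at fixed $u_i,v_i$ reproduces (\ref{L2_gradients}), with the Perona--Malik contribution producing the divergence term $-\delta\nabla\cdot(\nabla\sigma^*/(1+|\nabla\sigma^*|^2))$ after one integration by parts. Substituting (\ref{L2_gradients}) into $\nabla_\sigma\hat{J}_1(\sigma^*)+\mu^*=0$ gives the penultimate line of (\ref{f1}), while the forward equations hold by definition of $u_i(\sigma^*)$ and $E(\sigma^*,\mu^*)=0$ gives the last line.

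The main obstacle is the rigorous treatment of the nonsmooth factor $|\nabla u_i|$ in the fidelity term, whose derivative $\sgn(\nabla u_i)=\nabla u_i/|\nabla u_i|$ appears on the right-hand side of (\ref{adj_1})--(\ref{adj_2}) and is well defined only where $\nabla u_i\neq 0$. This is resolved by invoking Lemma \ref{boundary_conditions} together with the Alessandrini--Nesi-type choice $f_D^1=x$, $f_D^2=y$ mentioned after that lemma: the corresponding $u_1,u_2$ have no critical points in $\overline{\Omega}$ and their gradients are nowhere collinear, so $|\nabla u_i|>0$ everywhere, the formal differentiation of $\sigma\mapsto e^\sigma|\nabla u_i(\sigma)|$ is rigorous, and the adjoint equations hold in the strong form written in (\ref{f1}).
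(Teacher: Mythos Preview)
Your proposal is correct and follows essentially the same route as the paper, which simply states that the system (\ref{f1}) is obtained by rewriting the optimality conditions (\ref{grad1})--(\ref{comp_end}) of Proposition \ref{necessary} using the reduced-gradient formula (\ref{L2_gradients}) and Lemma \ref{Comp_conditions}. If anything, your write-up is more thorough than the paper's: you explicitly recall the Lagrangian derivation of the adjoint equations and address the well-definedness of $\sgn(\nabla u_i)$ via Lemma \ref{boundary_conditions}, a point the paper leaves implicit.
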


\section{Numerical solution of the CDII-SR problem}
\label{sec:numsol}

In this section, we discuss numerical optimization and approximation schemes to solve the CDII-SR problem. We also describe the Picard-type iterative scheme used by the authors in \cite{Hoff2014} to compare the results of our scheme with the Picard scheme.

\subsection{Variable inertial proximal method for CDII-SR} \label{sec:VIP}

 In this context, we first discuss proximal methods that consists of identifying a smooth and a non-smooth part in the reduced objective $\hat J(\sigma)$. Thus, we consider the following optimization problem 
\begin{align}\label{genprob}
\min\limits_{\sigma \in L_{ad}}\, \hat J(\sigma) := \hat{J}_1(\sigma)+\hat{J}_2(\sigma). 
\end{align}

We assume that $\nabla_\sigma \hat{J}_1(\sigma)$, given in (\ref{L2_gradients}) is Lipschitz continuous and the upper bound for the Lipschitz constant is obtained using a backtracking search scheme, which will be discussed later. Also, from (\ref{sumoffunctionals}), we have that $\hat{J}_2(\sigma)$ is a continuous, convex, and nondifferentiable functional. The formulation of proximal methods depends, essentially, on the following lemma \cite{Roy_AET2}

\begin{lemma}\label{lem:ineq}
Let $\hat{J}_1$ be differentiable with a Lipschitz continuous gradient with Lipschitz constant $L(\hat{J}_1)$. Then the following holds 
\begin{align}\label{mainlem}
\hat{J}_1(\sigma)\leq \hat{J}_1(\widetilde\sigma)+\left< {\nabla}\hat{J}_1(\widetilde\sigma),\sigma-\widetilde\sigma\right>+\frac {L} 2\|\sigma-\widetilde\sigma\|^2,\quad\forall \sigma,\widetilde\sigma\in L_{ad} ,
\end{align}
for all $L\geq L(\hat{J}_1)>0$. 
\end{lemma}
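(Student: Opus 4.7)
The plan is to prove Lemma \ref{lem:ineq} via the classical descent-lemma argument: parametrize the segment between $\widetilde\sigma$ and $\sigma$, apply the fundamental theorem of calculus to $\hat J_1$ along that segment, and control the resulting integrand with the Lipschitz bound on $\nabla\hat J_1$.

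Concretely, I would begin by fixing $\sigma,\widetilde\sigma\in L_{ad}$ and introducing the scalar function $g:[0,1]\to\R$ defined by $g(t):=\hat J_1(\widetilde\sigma+t(\sigma-\widetilde\sigma))$. Since $\hat J_1$ is Fr\'echet differentiable, $g$ is differentiable with
\begin{equation*}
g'(t)=\langle \nabla\hat J_1(\widetilde\sigma+t(\sigma-\widetilde\sigma)),\,\sigma-\widetilde\sigma\rangle,
\end{equation*}
and $g'$ is continuous on $[0,1]$ because $\nabla\hat J_1$ is (Lipschitz) continuous. The fundamental theorem of calculus then gives $\hat J_1(\sigma)-\hat J_1(\widetilde\sigma)=\int_0^1 g'(t)\,dt$. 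Subtracting and adding $\langle\nabla\hat J_1(\widetilde\sigma),\sigma-\widetilde\sigma\rangle$ yields
\begin{equation*}
\hat J_1(\sigma)-\hat J_1(\widetilde\sigma)-\langle\nabla\hat J_1(\widetilde\sigma),\sigma-\widetilde\sigma\rangle=\int_0^1\langle\nabla\hat J_1(\widetilde\sigma+t(\sigma-\widetilde\sigma))-\nabla\hat J_1(\widetilde\sigma),\,\sigma-\widetilde\sigma\rangle\,dt.
\end{equation*}

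Next I would bound the right-hand side by Cauchy--Schwarz and then invoke the Lipschitz hypothesis, obtaining
\begin{equation*}
\int_0^1\|\nabla\hat J_1(\widetilde\sigma+t(\sigma-\widetilde\sigma))-\nabla\hat J_1(\widetilde\sigma)\|\,\|\sigma-\widetilde\sigma\|\,dt\le L(\hat J_1)\,\|\sigma-\widetilde\sigma\|^2\int_0^1 t\,dt=\tfrac{L(\hat J_1)}{2}\|\sigma-\widetilde\sigma\|^2.
\end{equation*}
Since the hypothesis allows any $L\ge L(\hat J_1)$, replacing $L(\hat J_1)$ by $L$ gives the claimed inequality \eqref{mainlem}.

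I do not expect any serious obstacle: the argument is entirely standard once $g(t)$ is introduced, and all one needs is Fr\'echet differentiability of $\hat J_1$ together with global Lipschitzness of $\nabla\hat J_1$ on the segment $[\widetilde\sigma,\sigma]$. The only minor point to note is that $L_{ad}$ is convex (as an order interval in $H^1_0(\Omega)$), so the entire segment $\widetilde\sigma+t(\sigma-\widetilde\sigma)$, $t\in[0,1]$, remains in $L_{ad}$ and the Lipschitz estimate on $\nabla\hat J_1$ applies at every point of the segment; this is the only place where the admissible set structure is implicitly used.
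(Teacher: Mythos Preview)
Your argument is the classical descent--lemma proof and is correct; the paper itself does not give a proof of this lemma but simply cites \cite{Roy_AET2}, so your explicit derivation via $g(t)=\hat J_1(\widetilde\sigma+t(\sigma-\widetilde\sigma))$, the fundamental theorem of calculus, Cauchy--Schwarz, and the Lipschitz bound is exactly the standard route one would expect behind that citation. Your remark that convexity of $L_{ad}$ keeps the segment inside the admissible set is a useful clarification the paper leaves implicit.
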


We note that $L:=L(\hat{J}_1)$ represents the smallest value of $L$ such that (\ref{mainlem}) holds true. 

In a proximal scheme, one usually minimizes an upper bound of the objective functional at each iteration, instead of minimizing the functional directly. From Lemma \ref{lem:ineq}, we obtain the following
$$
\min_{\sigma\in L_{ad}}\left\{\hat{J}_1(\sigma)+\hat{J}_2(\sigma)\right\}\leq\min_{\sigma\in L_{ad}}\left\{\hat{J}_1(\widetilde\sigma)+\left< {\nabla}\hat{J}_1(y),\sigma-\widetilde\sigma\right>+\frac {L} 2\|\sigma-\widetilde\sigma\|^2+\hat{J}_2(\sigma)\right\},
$$
where equality holds if $\sigma=\widetilde\sigma$. Furthermore, we have the following equation 
\begin{align}
&\argmin_{\sigma\in L_{ad}}\left\{\hat{J}_1(\widetilde\sigma)+\left< {\nabla}\hat{J}_1(\widetilde\sigma),\sigma-\widetilde\sigma\right>+\frac {L} 2\|\sigma-\widetilde\sigma\|^2 + \hat{J}_2(\sigma)\right\} \nonumber \\
&=\argmin_{\sigma\in L_{ad}}\left\{\frac {L} 2\left\|\sigma-\left(\widetilde\sigma-\frac1{L} {\nabla}\hat{J}_1(\widetilde\sigma)\right)\right\|^2 + \hat{J}_2(\sigma)\right\}. \label{proxeq}
\end{align}

Using the definition of $\hat{J}_2(\sigma)=\gamma\|\sigma\|_{L^1(\Omega)}$, we have the following lemma from \cite{schindeleEll} that helps in characterizing the solution of (\ref{proxeq}).

\begin{lemma} 
The following equation holds
\begin{align*}
\argmin_{\sigma\in L_{ad}}\left\{\tau\|\sigma\|_{L^1}+\frac 1 2\|\sigma-\widetilde\sigma\|^2\right\}=\mathbb{S}^{L_{ad}}_{\tau}(\widetilde\sigma)\quad \text{ for any }\;\widetilde\sigma\in L^2(\Omega),
\end{align*}
where the left-hand side represents the proximal function and the projected soft thresholding function on the right-hand side is defined as follows 
\begin{align}
\mathbb{S}^{L_{ad}}_{\tau}(\widetilde\sigma):=\begin{cases}
\min\{\widetilde\sigma-\tau,\sigma_u\}&\text{on }\{(x,y)\in\Omega: \, \widetilde\sigma(x,y)>\tau\}\\
0&\text{on }\{(x,y)\in\Omega: \, |\widetilde\sigma(x,y)|\leq\tau\}  \label{Sfunction}\\
\max\{\widetilde\sigma+\tau,\sigma_l\}&\text{on }\{(x,y) \in\Omega: \, \widetilde\sigma(x,y)<-\tau\}
\end{cases}.
\end{align}
\end{lemma}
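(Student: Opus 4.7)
The plan is to exploit the pointwise separability of the functional. Writing
\[
\tau\|\sigma\|_{L^1}+\tfrac12\|\sigma-\widetilde\sigma\|^2
=\int_\Omega\Bigl(\tau|\sigma(x,y)|+\tfrac12(\sigma(x,y)-\widetilde\sigma(x,y))^2\Bigr)\,dx\,dy,
\]
and observing that the pointwise constraint $\sigma(x,y)\in[\sigma_l,\sigma_u]$ (the only part of the constraint set $L_{ad}$ relevant to this proximal step) is separable in $(x,y)$, the problem reduces to the one-parameter family of scalar convex problems
\[
\min_{s\in[\sigma_l,\sigma_u]}\phi_w(s),\qquad \phi_w(s):=\tau|s|+\tfrac12(s-w)^2,\qquad w:=\widetilde\sigma(x,y).
\]
Because $\phi_w$ is strictly convex and continuous, each such problem has a unique minimizer $s^*(w)$, and by standard measurable-selection arguments the pointwise map $w\mapsto s^*(w)$ produces a measurable function whose value at $(x,y)$ is the minimizer of the original functional.

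Next I would solve the scalar problem. First I consider the unconstrained minimizer of $\phi_w$ over $\R$. Either by splitting into the three cases $s>0$, $s=0$, $s<0$ and checking $\phi_w'$, or equivalently using the Fr\'echet subdifferential $\partial|s|=\sgn(s)$ for $s\neq0$ and $\partial|s|_{s=0}=[-1,1]$, the optimality condition $0\in s-w+\tau\,\partial|s|$ yields the classical soft thresholding $T_\tau(w)=\sgn(w)\max\{|w|-\tau,0\}$: namely $w-\tau$ for $w>\tau$, $0$ for $|w|\le\tau$, and $w+\tau$ for $w<-\tau$.

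To incorporate the box constraint, I use convexity of $\phi_w$: on either side of $T_\tau(w)$ the function $\phi_w$ is monotone, so the minimizer on the interval $[\sigma_l,\sigma_u]$ is the metric projection of $T_\tau(w)$ onto this interval. Since $\sigma_l<0<\sigma_u$ by hypothesis ($\sigma_l=-\tfrac12\sigma_t$ and $\sigma_u>0$), the value $0$ in the middle case already lies in $[\sigma_l,\sigma_u]$; in the case $w>\tau$ the minimizer $w-\tau>0$ need only be capped from above, giving $\min\{w-\tau,\sigma_u\}$; and symmetrically in the case $w<-\tau$ we obtain $\max\{w+\tau,\sigma_l\}$. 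Assembling the three cases pointwise recovers $\mathbb{S}^{L_{ad}}_\tau(\widetilde\sigma)$ as defined in \eqref{Sfunction}, which concludes the proof after noting that this pointwise construction lies in $L^2(\Omega)$ (it is dominated by $|\widetilde\sigma|+\tau$ on one set and bounded by $\max(|\sigma_l|,\sigma_u)$ elsewhere).

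The only genuinely delicate step is justifying the decoupling argument: I must ensure that the pointwise a.e.\ minimizer is measurable and that no function in $L_{ad}$ can beat it on a set of positive measure. The measurability follows from $\mathbb{S}^{L_{ad}}_\tau(\widetilde\sigma)$ being a Borel composition of $\widetilde\sigma$, and the optimality follows because if a competitor $\sigma\in L_{ad}$ had $\phi_{\widetilde\sigma(x,y)}(\sigma(x,y))<\phi_{\widetilde\sigma(x,y)}(s^*(\widetilde\sigma(x,y)))$ on a positive-measure set, integrating over $\Omega$ would contradict the pointwise optimality of $s^*$. Everything else is routine one-variable convex analysis.
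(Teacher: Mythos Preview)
Your argument is correct and is the standard route: decouple pointwise, solve the scalar proximal problem to get the soft threshold $T_\tau$, then project onto $[\sigma_l,\sigma_u]$ using the monotonicity of $\phi_w$ on either side of its unconstrained minimizer. The paper does not supply its own proof of this lemma; it simply cites \cite{schindeleEll}, so there is nothing to compare at the level of technique. Your remark that only the box constraint in $L_{ad}$ is operative here is worth making explicit, since as literally stated $L_{ad}\subset H_0^1(\Omega)$ and the thresholded function need not retain $H^1$ regularity; the lemma is really a statement about the $L^2$ proximal map with pointwise bounds, which is how it is used in the VIP algorithm.
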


Using this lemma, the solution to (\ref{proxeq}) is given by
\begin{align*}
\argmin_{\sigma \in L_{ad}}\left\{\hat{J}_2(\sigma)+\frac {L} 2\left\|\sigma -\left(\widetilde\sigma-\frac1{L} {\nabla}\hat{J}_1 (\widetilde\sigma)\right)\right\|^2\right\}=\mathbb{S}^{L_{ad}}_{\frac{\gamma}{L}}\left(\widetilde\sigma-\frac1{L} {\nabla}\hat{J}_1(\widetilde\sigma)\right) .
\end{align*} 
This gives rise to the following iterative scheme 
\begin{align*}
\sigma_{k+1} \leftarrow \mathbb{S}^{L_{ad}}_{\frac{\gamma}{L}}\left(\sigma_{k}-\frac1{L} {\nabla}\hat{J}_1(\sigma_{k})\right) ,
\end{align*}
starting from a given $\sigma_0$ and is known as the iterative shrinkage-thresholding algorithm (ISTA) scheme \cite{schindeleEll}. We note that the argument of $\mathbb{S}^{L_{ad}}_{\frac{\gamma}{L}}$ represents a gradient update in a steepest descent scheme with a fixed step size $s= 1/L$ in conjunction with a regularized PM filter \cite{Roy_AET2}. Further, to accelerate the ISTA scheme described above, one can consider a sequence $\{t_k,v_k\}$ \cite{schindeleEll, schindele} such that
\begin{equation}\label{algot}
t_0=1,\qquad t_k:=1+\sqrt{1+4t_{k-1}^2}/2 ,
\end{equation}
and
\begin{align}\label{vupdate}
v_0:=\sigma_0,\qquad v_k:=\sigma_{k}+\frac{(t_{k-1}-1)}{t_{k}}(\sigma_{k}-\sigma_{k-1}). 
\end{align}
This gives us the following update for the optimization variable $\sigma_k$
\begin{equation}\label{FISTA}
\sigma_{k+1}\leftarrow \mathbb{S}^{L_{ad}}_{\frac{\gamma}{L}}\left(v_{k}-\frac1{L} {\nabla}\hat{J}_1(v_{k})\right).
\end{equation}

Replacing $v_k$ in \eqref{FISTA} with \eqref{vupdate}, and assuming that ${\nabla}\hat{J}_1(\sigma_{k}) \approx {\nabla}\hat{J}_1(v_{k})$, we obtain the following iterative scheme \cite{schindele}
\begin{align}\label{update}
\sigma_{k+1}\leftarrow \mathbb{S}^{L_{ad}}_{\gamma \, s_k}\left(\sigma_{k}-s_k\nabla_\sigma \hat{J}_1(\sigma_{k})+\theta_k \, (\sigma_k-\sigma_{k-1})\right) ,
\end{align}
where $\sigma_{-1}=\sigma_0$.  

The above discussion is valid for any $L\geq L(\hat{J}_1)$. However, since the quantity $s=1/L$ represents the step size in a gradient update, we use a backtracking line search algorithm to determine the optimal step size in each iteration. This leads to the computation of an upper bound $L_k$ that satisfies $L_k\geq L(\hat{J}_1)$ at each iteration step. Thus, we define our variable step size as $s_k = 1/L_k$ and substitute $\tau$ in (\ref{Sfunction}) with $\gamma s_k$. The variable step size causes the factor $\frac{(t_{k-1}-1)}{t_{k}}$ in (\ref{vupdate}) to be non-optimal and we replace it by the fixed inertial parameter $\theta$. This leads to a variable inertial proximal (VIP) scheme, which is described in Algorithm \ref{proximal_algo}. 

With our VIP scheme, we aim at determining an optimal $\sigma \in L_{ad} \subset H^1_0(\Omega)$. But in the update step of the algorithm, we have the argument of the thresholding function $\mathbb{S}^{L_{ad}}_{\frac{\gamma}{L}}$ as $\sigma_{k}-s_k\nabla_\sigma \hat{J}_1(\sigma_{k})$. The term $\nabla_\sigma \hat{J}_1(\sigma_{k})$ is only in $L^2(\Omega)$ and the resulting update gives us the argument of $\mathbb{S}^{L_{ad}}_{\frac{\gamma}{L}}$ in $L^2(\Omega)$, which is not desired. We, thus, use the $H^1$ gradient instead of the $L^2$ gradient, which are related by the equation $((\nabla_\sigma \hat{J}_1)_{H^1},v)_{H^1(\Omega)}=(\nabla_\sigma \hat{J}_1,v)_{L^2(\Omega)}$ for all $v \in H^1(\Omega)$. 
But such a $H^1$ gradient results in a highly diffused $\sigma$ with blurred edges. We, instead, consider a weighted $H^1$ product 
that represents a suitable denoising of the $\nabla_\sigma \hat{J}_1(\sigma)$. We apply the denoising operator $R(c)=(I-c \, \Delta)^{-1}$ with a small denoising parameter $c$ and define $(\nabla_\sigma \hat{J}_1)_{H^1} = R(c) \, \nabla_\sigma \hat{J}_1$. Note that a higher value of $c$ results in a greater blurring of the edges along with noise removal. On the other hand, since the PM term in the functional $J$ promotes better resolution with edge-enhancement, we choose the value of $c$ in proportion to the weight $\delta$ of the PM functional term.

We summarize the variable inertial proximal (VIP) scheme for our CDII-SR setup in Algorithm \ref{proximal_algo} below

\begin{algorithm}[Variable inertial proximal (VIP) method]\label{proximal_algo}\
\begin{enumerate}

\item  Input: $\beta$, $\hat{J}_1$, $\sigma_0=\sigma_{-1}$, $L_{ad}$, $TOL$, $n>1$, $L_0>0$\\
   \textbf{Initialize:} $E_0=1$, $k=0$, choose $\theta\in(0,1)$ and $c_1 < 2$ and $c_2>0$;
\item While{$\|E_{k-1}\|>TOL$} do
\item Compute $\nabla_\sigma \hat{J}_1(\sigma_{k})$ 
\item Backtracking: Find the smallest nonnegative integer $i$ such that with \\
\hspace*{3ex}$\tilde{L}=n^{i}L_{k-1}$
\begin{align*}
\hat{J}_1(\tilde{\sigma})\leq\hat{J}_1(\sigma_{k})+\left< \nabla_\sigma \hat{J}_1(\sigma_{k}),\tilde{\sigma}-\sigma_{k}\right>+\frac {\tilde{L}} 2\|\tilde{\sigma}-\sigma_{k}\|^2
\end{align*}
\hspace*{3ex}where $\tilde{\sigma}=\mathbb{S}^{L_{ad}}_{\gamma\, s}\left(\sigma_{k}-s\, (\nabla_\sigma \hat{J}_1)_{H^1}(\sigma_{k})+\theta(\sigma_k-\sigma_{k-1})\right)$, $s = c_1 (1-\theta)/(\tilde{L}+2c_2)$,
\item  Set $L_k=\tilde{L}$ and $s_k=c_1 (1-\theta)/(L_k+2c_2)$.
\item  $\sigma_{k+1}=\mathbb{S}^{L_{ad}}_{\gamma\, s_k}\left(\sigma_{k}-s_k \, (\nabla_\sigma \hat{J}_1)_{H^1}(\sigma_{k})+\theta(\sigma_k-\sigma_{k-1})\right)$
 \item  $\mu_k=-\alpha \sigma_k-(\nabla_\sigma \hat{J}_1)_{H^1}(\sigma_{k})$ 
 \item  $E_{k}=E(\sigma_k,\mu_k)$
 \item $k=k+1$   
\item end
\end{enumerate}
\end{algorithm}
In the VIP algorithm, we need to compute the reduced gradient $\nabla_\sigma \hat{J}_1$. This, in turn, requires an accurate numerical solution of the forward and the corresponding adjoint EIT problems  as given in Proposition \ref{nec_opt}. For the forward EIT equation (\ref{eq1}), we use the cell-nodal finite-difference approximation. We consider a sequence of uniform grids 
$\lbrace\Omega_h\rbrace_{h>0}$ given by 
$$
\Omega_h = \lbrace{(x_i,y_j)\in\mathbb{R}^2:(x_i,y_j) = (a+ih,a+jh),~(i,j)\in
\lbrace{0,\hdots,N}\rbrace^2}\rbrace\cap\Omega,
$$
where $N$ represents the number of cells in each direction and $h = \dfrac{(b-a)}{N}$ is the mesh size. The corresponding cell-nodal scheme for (\ref{eq1}), at the grid point $(x_i,y_j)$,  is given as follows 

\begin{equation}\label{FD_scheme}
\begin{aligned}
&\dfrac{1}{h^2} \Bigg\lbrace({e^{\sigma_{i+1/2,j}}}+{e^{\sigma_{i-1/2,j}}}+e^{\sigma_{i,j+1/2}}+e^{\sigma_{i,j-1/2}})u_{i,j}\\
& -e^{\sigma_{i+1/2,j}}u_{i+1,j}- e^{\sigma_{i-1/2,j}}u_{i-1,j}- e^{\sigma_{i,j+1/2}}u_{i,j+1}- e^{\sigma_{i,j-1/2}}u_{i,j-1}  \Bigg\rbrace=0,\qquad 1 \leq i,j \leq N-1,\\
\end{aligned}
\end{equation}
where $\sigma_{i \pm 1,j}=\sigma(x_i \pm h , y_j)$, 
$\sigma_{i ,j \pm 1}=\sigma(x_i , y_j \pm h )$. The required intermediate values 
of $\sigma$ are computed as follows  
$$
{\sigma_{i \pm 1/2,j}} = \dfrac{1}{2}\Bigg({\sigma_{i\pm 1,j}}+{\sigma_{i,j}} \Bigg) \quad \mbox{ and } \quad 
{\sigma_{i ,j \pm 1/2}} = \dfrac{1}{2}\Bigg({\sigma_{i,j \pm 1}}+{\sigma_{i,j}} \Bigg) .
$$
The Dirichlet boundary data $f_D$ is included in the usual way in the right-hand side of the algebraic equation. 

For the adjoint equations (\ref{adj_1}) and (\ref{adj_2}), we first note that the cell nodal finite difference scheme is not applicable to the right-hand side term in both the equations as they are of the form $G=\nabla\cdot F$, where $F$ is in $L^2(\Omega)$. We modify the cell nodal scheme by replacing the nodal value of $G$ at $(x_i,y_j)$ with a cell average of $G$ given as follows
\[
G_a = \frac{1}{h^2}\int_{C_{ij}}G(x,y)~dxdy,
\] 
where the cell $C_{ij}$ is defined by
\[
C_{ij}:= \left(x_i-\frac{h}{2},x_i+\frac{h}{2}\right)\times \left(y_j-\frac{h}{2},y_j+\frac{h}{2}\right),\quad 1\leq i,j\leq N-1.
\]
Since $G=\nabla\cdot F$, using the divergence theorem we have
\[
G_a = \frac{1}{h^2}\int_{C_{ij}}\nabla\cdot F(x,y)~dxdy =\frac{1}{h^2}\int_{\partial C_{ij}} F(x,y).n~ds 
\]
The above integral can be approximated with a midpoint quadrature rule along each edge of $C_{ij}$. This results in the following approximation
\[
G_a = \frac{F^1_{i+1/2,j}-F^1_{i-1/2,j}}{h}+\frac{F^2_{i,j+1/2}-F^2_{i,j-1/2}}{h},
\]
where $F=(F^1,F^2)$.

\subsection{Picard-type algorithm}\label{sec:Picard}
To solve the CDII inverse problem, the authors in \cite{Hoff2014} use a Picard-type iterative algorithm. This algorithm was the fundamental approach that was used to solve the CDII inverse problem expressed as the 1-Laplacian equation given in (\ref{1_Laplacian}) \cite{Na09,Na11} was known as the J-substitution algorithm. To start the Picard algorithm, we use an initial guess for $\sigma$ as $\sigma_0$. We then solve \eqref{eq1} for $u_1$ in the first iteration and then obtain $\sigma_1$ from the interior data \eqref{interior_data} corresponding to this value of $u_1$. In the next iteration, we solve for $u_2$ using $\sigma_1$ and again obtain the next iterate $\sigma_2$ from the interior data \eqref{interior_data} corresponding to this value of $u_2$. We apply this method repeatedly, going back and forth between the two sets of interior data, till the difference in the $L^2$ norm of two successive iterates of $\sigma$ are small. The algorithm is summarized below.

\begin{algorithm}[Picard-type algorithm]\label{Picard_algo}\
\begin{enumerate}

\item  Input: Initial guess $\sigma_0$, maximum iterations = $K$, $TOL$\\
   \textbf{Initialize:} $err_0=1$, $k=0$.
\item While{ $\|err_{k}\|_2>TOL$ and $k<K$} do
\item $j = (k ~MOD ~2) + 1$ (To go back and forth between the two interior measurements.
\item Solve for $u_j^k$ from (\ref{eq1}), i.e., solve
\[
\begin{aligned}
\mathcal{L}(u_j^k,\sigma_k,f_D^j) = 0
\end{aligned}
\]
\item  Update $\sigma_{k+1}  = \dfrac{H_j}{|\nabla u_j^k|}$
\item  $err_{k+1} = \|\sigma_{k+1}-\sigma_k\|_2$
 \item $k=k+1$   
\item end
\end{enumerate}
\end{algorithm}

The authors demonstrated in their paper that the Picard algorithm was fast and efficient. With 2 data sets and even in the presence of noisy interior data, the Picard method gave good conductivity reconstructions in CDII with a high resolution and less visible artifacts in comparison to other Newton based schemes. Thus, in order to test the performance of our CDII-SR scheme, we will compare the results of our scheme with the results of the Picard scheme in the next section.

\section{Numerical experiments}
\label{sec:numexp}
In this section, we validate our CDII-SR framework using different experiments that support the choice of our proposed  mathematical formulation and demonstrate its effectiveness in reconstructing a wide variety of objects. We choose the two boundary conditions as $f^1_D=x,~f_D^2=y$ on $\Gamma$, which is the boundary of $\Omega=(-1,1) \times (-1,1)$.  For the experiments, we test our scheme with different values of the regularization parameters $\beta,\gamma,\delta$ in the functional \eqref{functional}, relative to the  value of the data-fit parameters $\alpha_1,\alpha_2 $. For this purpose, we fix the values of $\alpha_1 = \alpha_2 = 1.0$. Since the primary objective is to fit the measured interior data, the value of the regularization parameters are chosen to be less than 1. The parameters of our VIP scheme are chosen based on the convergence properties of the scheme given in \cite{schindele}. We specified the tolerance level as $TOL=10^{-4}$ and maximum number of iterations as 20. But, due to the high non-linearity of the problem, our VIP scheme terminates because of the criteria of  maximum number of iterations. For the Picard algorithm, described in Section \ref{sec:Picard}, we choose the same stopping criterion. To solve the equation $\mathcal{L}(u_j^k,\sigma_k,f_D^j) = 0$ in Algorithm \ref{Picard_algo}, we use the finite difference scheme described in (\ref{FD_scheme}).

In all the experiments, the domain $\Omega$ is uniformly discretized into $N=150$ subintervals in both the $x$ and $y$ directions with $h=0.013$. The generation of the synthetic interior electric field data $H^\delta$ is done as follows: we first solve for $u$ in (\ref{eq1}) with given value of $\sigma$ on a finer mesh with $N=400$ using the finite difference method outlined in Section \ref{sec:numsol}. Then, we compute $\nabla u$ with one-sided finite differences to obtain $H^\delta$ on the finer mesh. In the final step, we restrict the obtained $H^\delta$ onto the coarser mesh with $N=150$ and choose this as our given data to which we also add noise in some of the experiments.

In Test Case 1, we consider a phantom that is represented by a disk centered at $(0.25,0.25)$ with radius 0.25. The value of $\sigma$ inside the disk is 1 with the background value chosen as 0. The plots of the actual $\sigma$ and the reconstructed $\sigma$ are shown in Figure \ref{disk}.

\begin{figure}[H]
\centering
\subfloat[Actual phantom]{\includegraphics[width=0.35\textwidth]{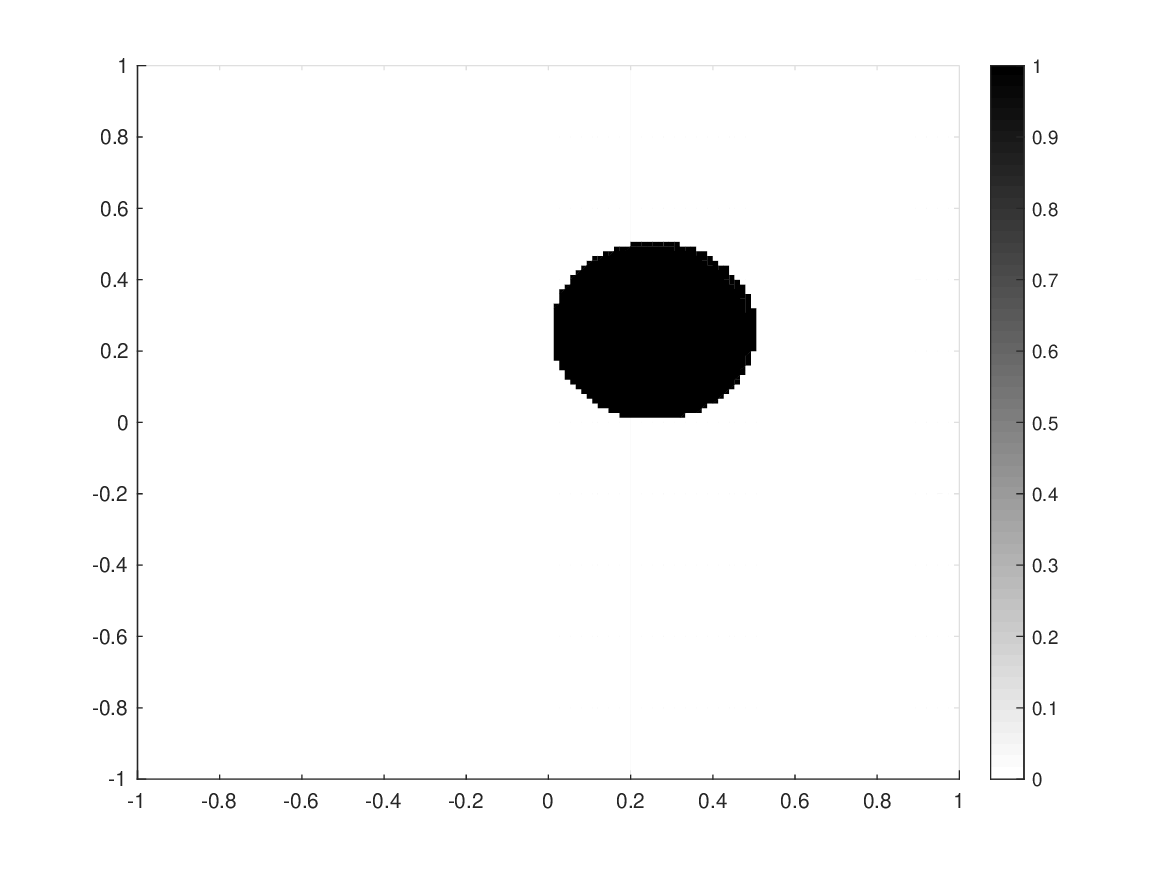}\label{disk_actual}}
\subfloat[$\beta=\gamma=\delta=0$]{\includegraphics[width=0.35\textwidth]{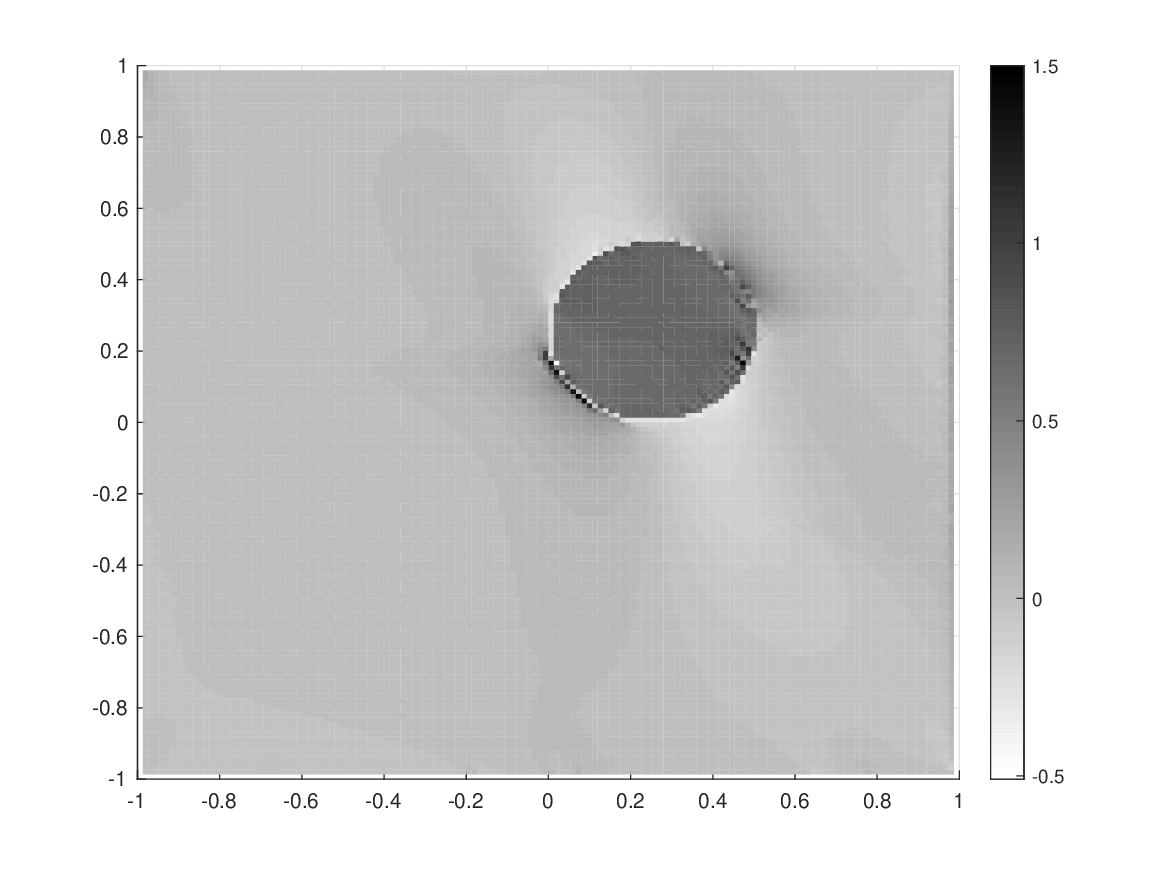}\label{disk_no_regularization}}
\subfloat[$\beta = 0.03,\gamma = 0.3,\delta = 0, c = 0$]{\includegraphics[width=0.35\textwidth]{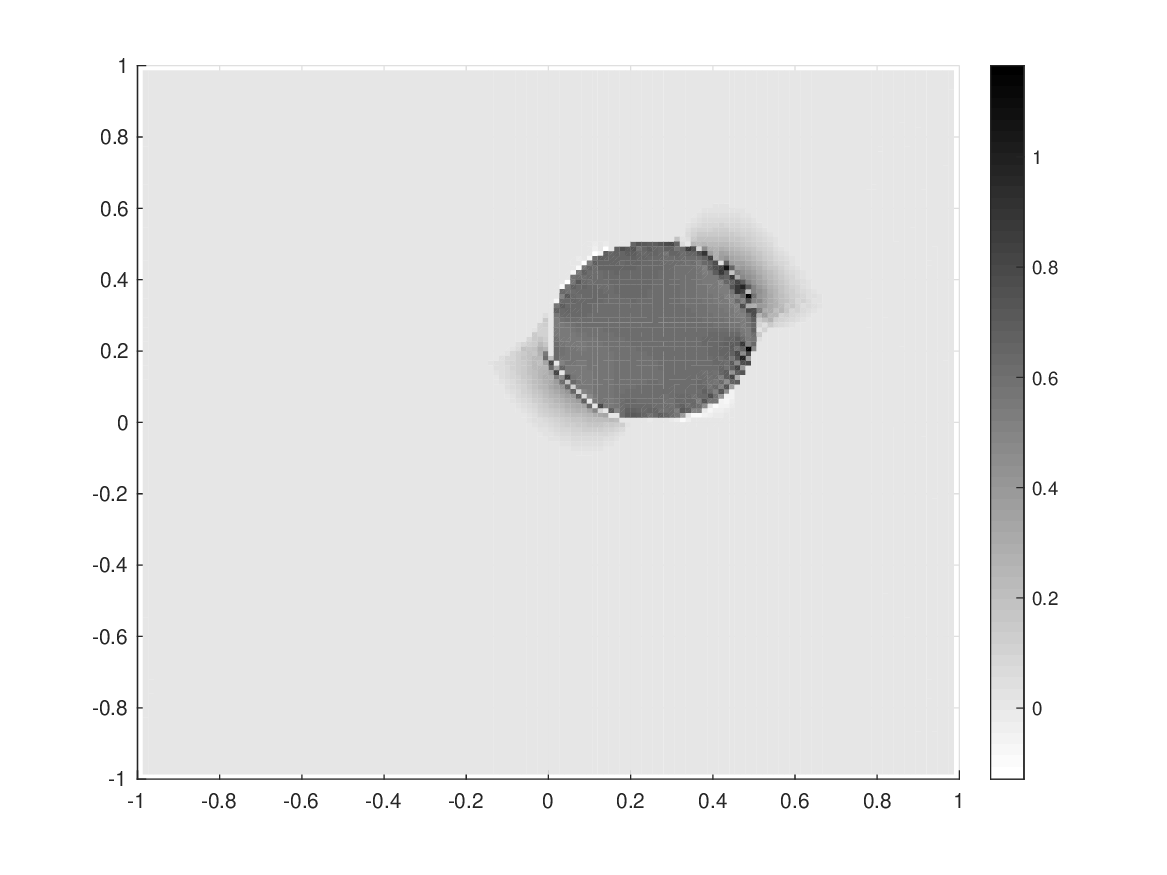}\label{disk_L2L1_regularization}}\\
\subfloat[$\beta = 0.03,\gamma = 0.3,\delta = 0,$\newline$ c = 0.001$]{\includegraphics[width=0.35\textwidth]{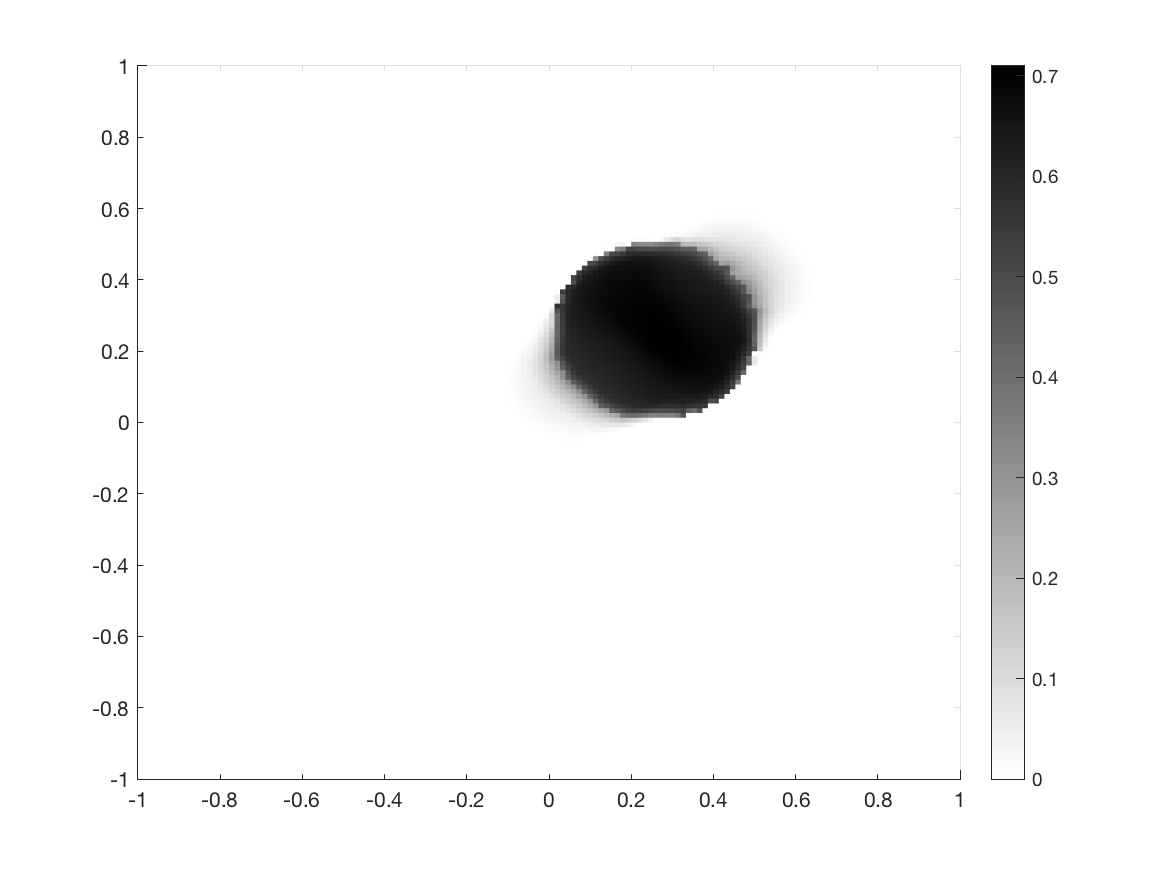}\label{disk_L2L1denoising_regularization}}
\subfloat[$\beta = 0.03,\gamma = 0.3,\delta = 0.01, $ \newline$c = 0.001$]{\includegraphics[width=0.35\textwidth]{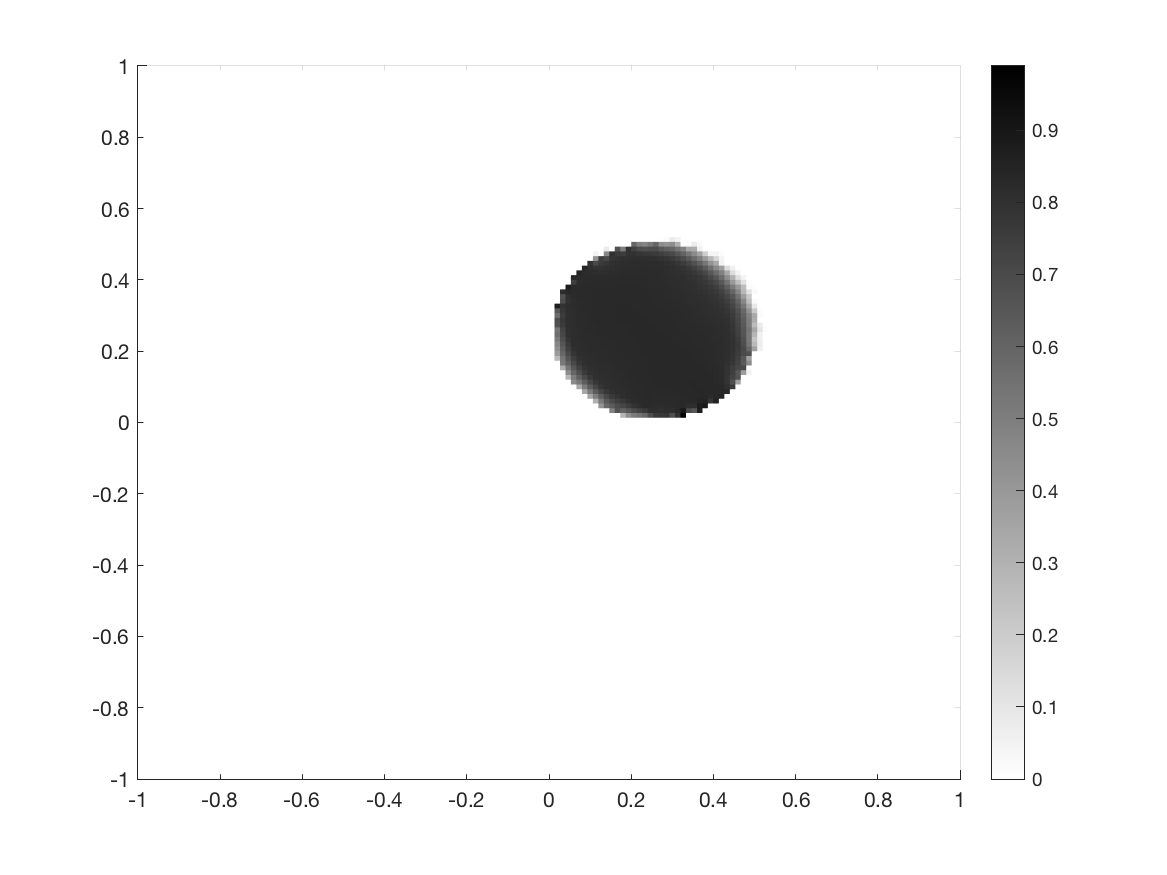}\label{disk_recon}}
\subfloat[ Picard algorithm in \cite{Hoff2014}]{\includegraphics[width=0.35\textwidth]{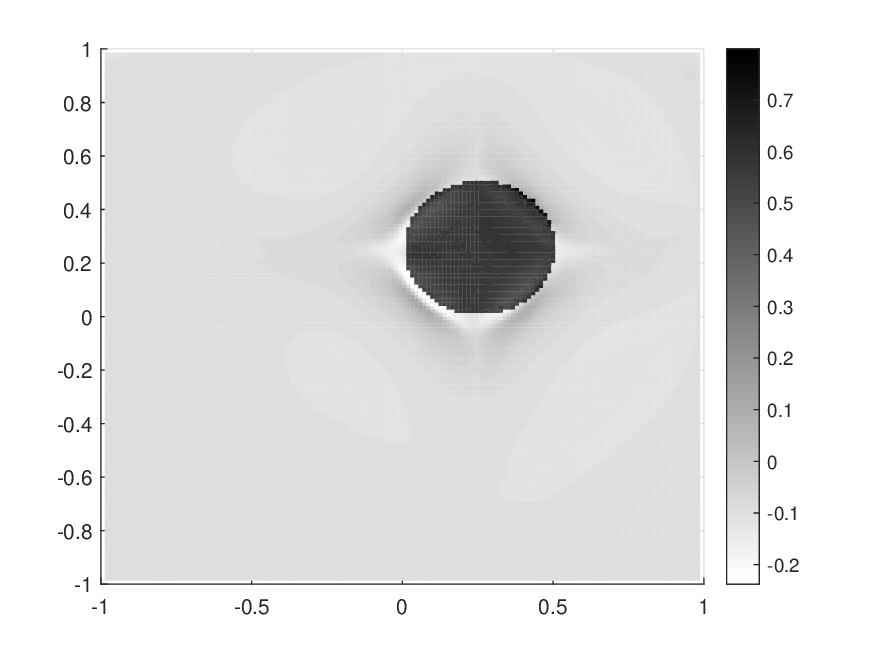}\label{disk_recon_lin}}

    \caption{Test Case 1- The actual and reconstructed disk 
    with different choices of the values of the regularization weights. }
    \label{disk}
  \end{figure}
  Figure \ref{disk_no_regularization} shows the reconstruction of $\sigma \in L_{ad}$ without any regularization terms, i.e. $\beta=\gamma=\delta=0$ and no denoising, i.e., $c=0$. Presence of strong artifacts can be observed in this case, which is inherent to the inverse problem and not the algorithm. A study of the pattern of such artifacts are very challenging and is out of the scope of the paper.

Figure \ref{disk_L2L1_regularization} shows the result using the CDII-SR scheme without the denoising and the Perona-Malik regularization term,  $c=\delta=0$, but with the $L^2-L^1$ regularization. The $L^2-L^1$ regularization parameter values were set at $\beta = 0.03,~\gamma = 0.3$. We observe that the artifacts are reduced to some extent, but are still present. Figure \ref{disk_L2L1denoising_regularization} shows the reconstruction with the same $L^2-L^1$ regularization parameter values and the $H^1$ denoising parameter value $c = 0.001$ but without the PM filter. In this case, we observe that the artifacts diminish by a huge amount, but the edges are more blunt and the value of $\sigma$ is lowered, leading to a loss of resolution and contrast. We correct this loss using the PM regularization term with the parameter value $\delta =0.01$. This can be observed in Figure \ref{disk_recon}, where the edges are fairly well seen and the recovered parameter values are very close to the true ones. We also compare our results with the reconstruction in Figure \ref{disk_recon_lin} obtained with the Picard algorithm proposed in \cite{Hoff2014}. We observe a lot of artifacts and a significant loss of contrast in Figure \ref{disk_recon_lin} in comparison to the reconstruction shown in Figure \ref{disk_recon}, which suggests that our CDII-SR scheme outperforms the Picard scheme.

We also implemented Test Case 1 with different values of the parameters $\theta,c_1,c_2$ in the VIP algorithm along with values of $\beta,\gamma,\delta = 0, 0.001, 0.01, 0.1$. The convergence results of the VIP algorithm are listed in Table \ref{tab:VIP}. The variable $v$ represents the value of each one of the parameters $\theta,c_1,c_2$. In the table, each cell in the columns under the individual parameters represent the convergence property of the VIP algorithm for $v$ in the prescribed range, when the other parameters are specified in their range of convergence.

\begin{table}[H]

\begin{center}
\begin{tabular}{V{3}cV{3}c|c|cV{3}}
\hlineB{3}
\textbf{Values} &$\theta$ & $c_1$ & $c_2$ \\
\hlineB{3}
$v \geq 2$  & No convergence & No convergence & Slow Convergence \\
\hline
$v < 2, v \approx 2$  & No convergence & Good convergence & Slow Convergence \\
\hline
$1 \leq v < 2$  & No convergence & Convergence & Slow Convergence \\
\hline
$v <1, v \approx 1$  &  Convergence & Convergence & Slow Convergence \\
\hline
$v \ll 1$  &  Convergence & Slow Convergence & Good Convergence \\
\hlineB{3}
\end{tabular}
\end{center}
\caption{Convergence properties of the VIP algorithm for various values $v$ of $\theta,c_1,c_2$}
\label{tab:VIP}

\end{table}

We first remark that varying the parameters in the VIP algorithm results in change of convergence properties of the CDII-SR scheme. But choosing different values of $\theta,c_1,c_2$ in the range of convergence, we did not observe any change in the qualitative or quantitative properties of the reconstructed conductivities. From Table \ref{tab:VIP}, we can see that for values of $\theta \in [0,1),~ c_1 \approx 2,~ c_2 \ll 1$ we have good convergence of the VIP scheme. For a more detailed discussion of these range of values for convergence, we refer the reader to \cite{schindele}. Thus, we choose $\theta=0.5,~ c_1 = 1.9, c_2 = 0.001$ in our subsequent experiments.  

In Test Case 2, we consider the heart and lung phantom. It consists of two ellipses representing lungs with the value of $\sigma= 1$ and a circular region representing the heart with the value of $\sigma=0.5$. The background value of $\sigma$ is 0.  The plots of the actual and reconstructed $\sigma$ for various values of $\beta,\gamma,\delta,c$ are shown in Figure \ref{heart_lung}.

\begin{figure}[H]
\centering
\subfloat[Actual phantom]{\includegraphics[width=0.35\textwidth]{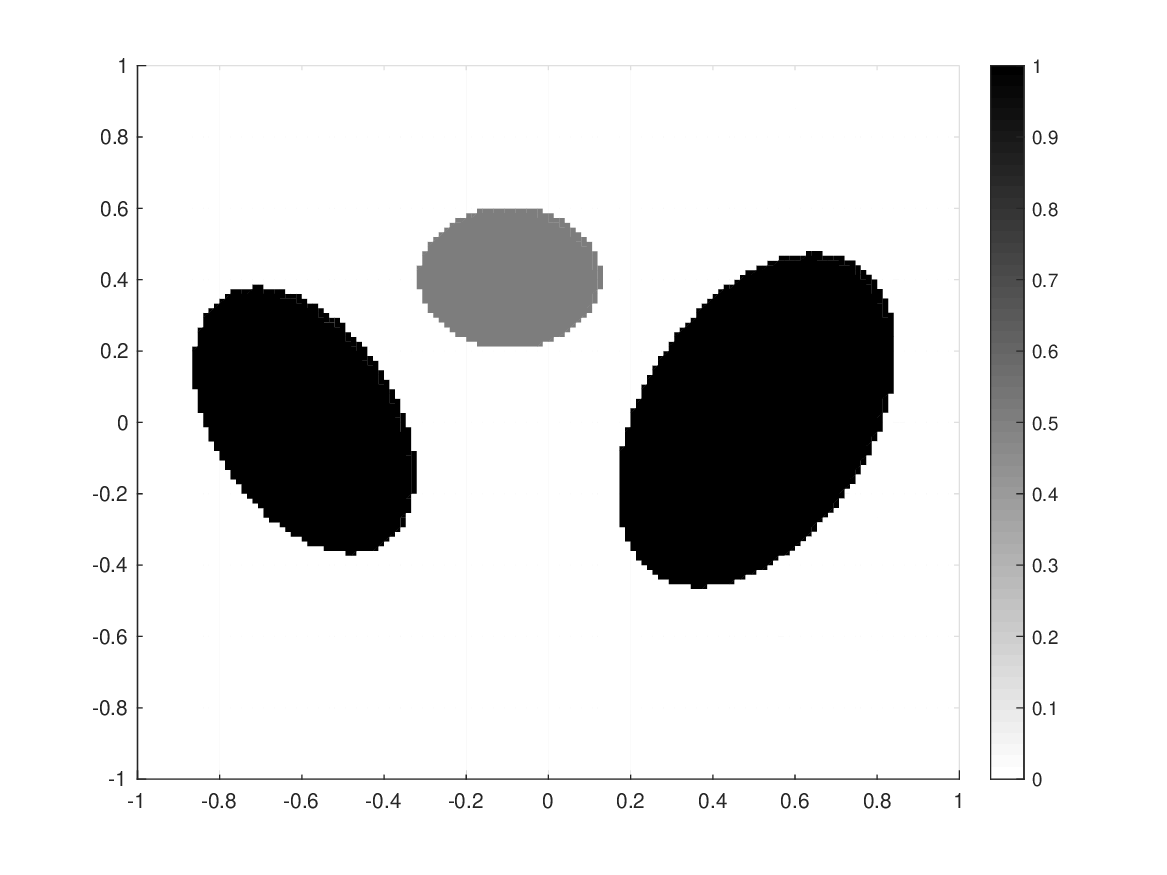}\label{heart_lung_actual}}
\subfloat[ $\beta=0.3,\gamma=0.01,\delta=0.01$]{\includegraphics[width=0.35\textwidth]{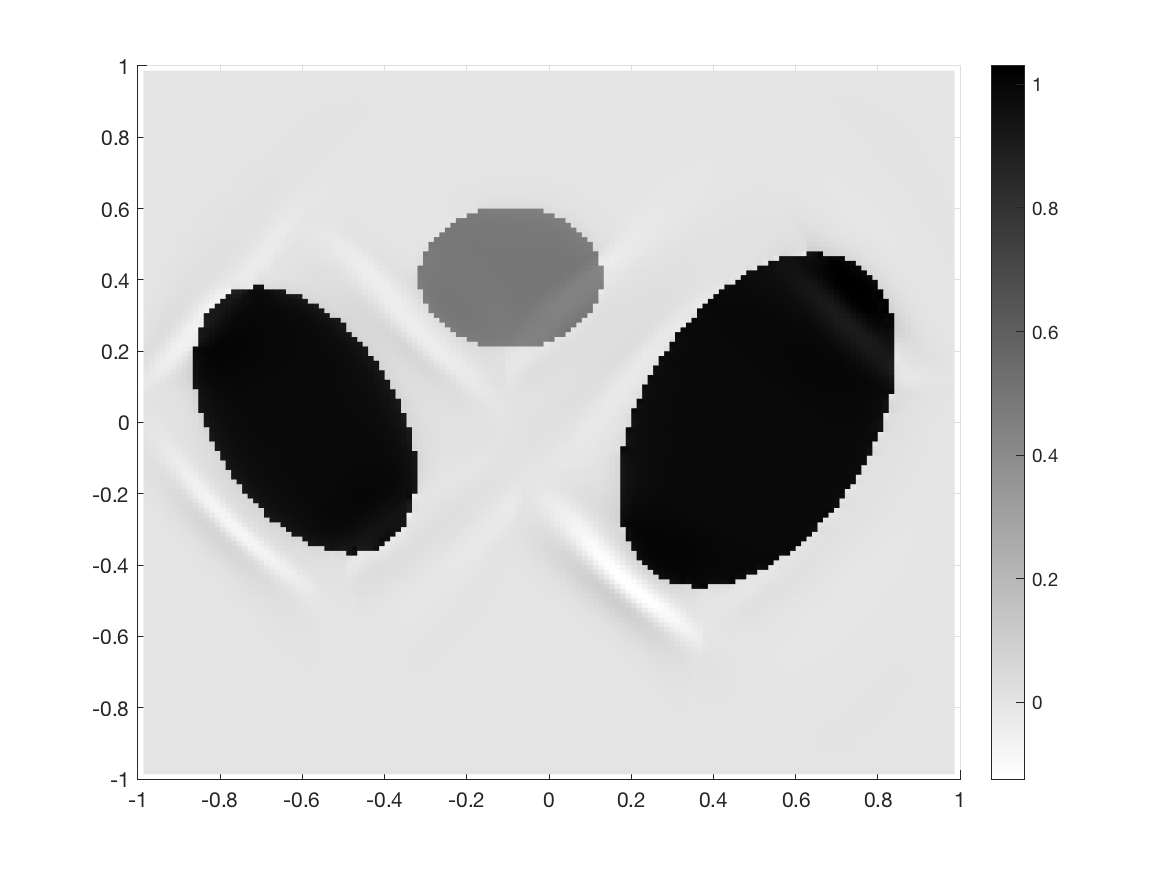}\label{heart_lung_recon3-01-01}}
\subfloat[ $\beta=0.3,\gamma=0.1, \delta=0.01$]{\includegraphics[width=0.35\textwidth]{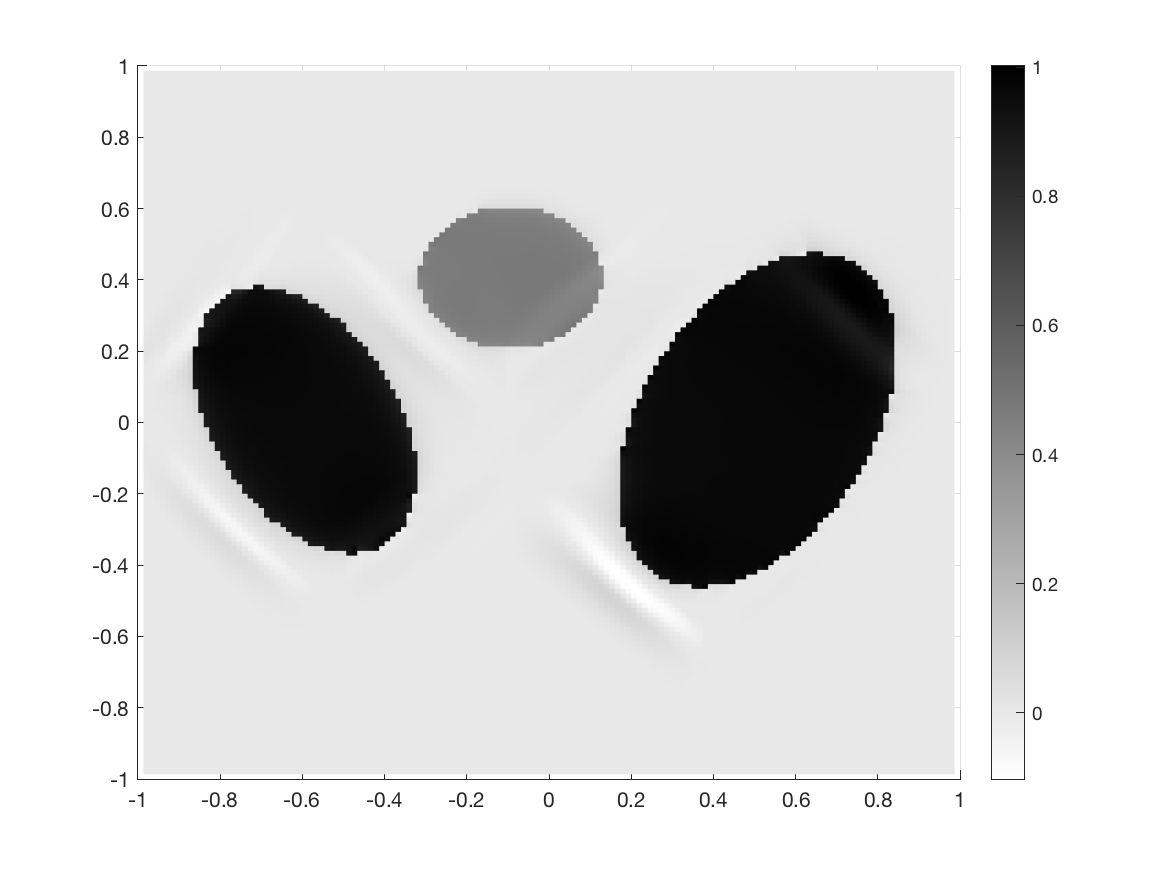}\label{heart_lung_recon3-1-01}}\\
\subfloat[ $\beta=0.3,\gamma=0.3,\delta=0.01$]{\includegraphics[width=0.35\textwidth]{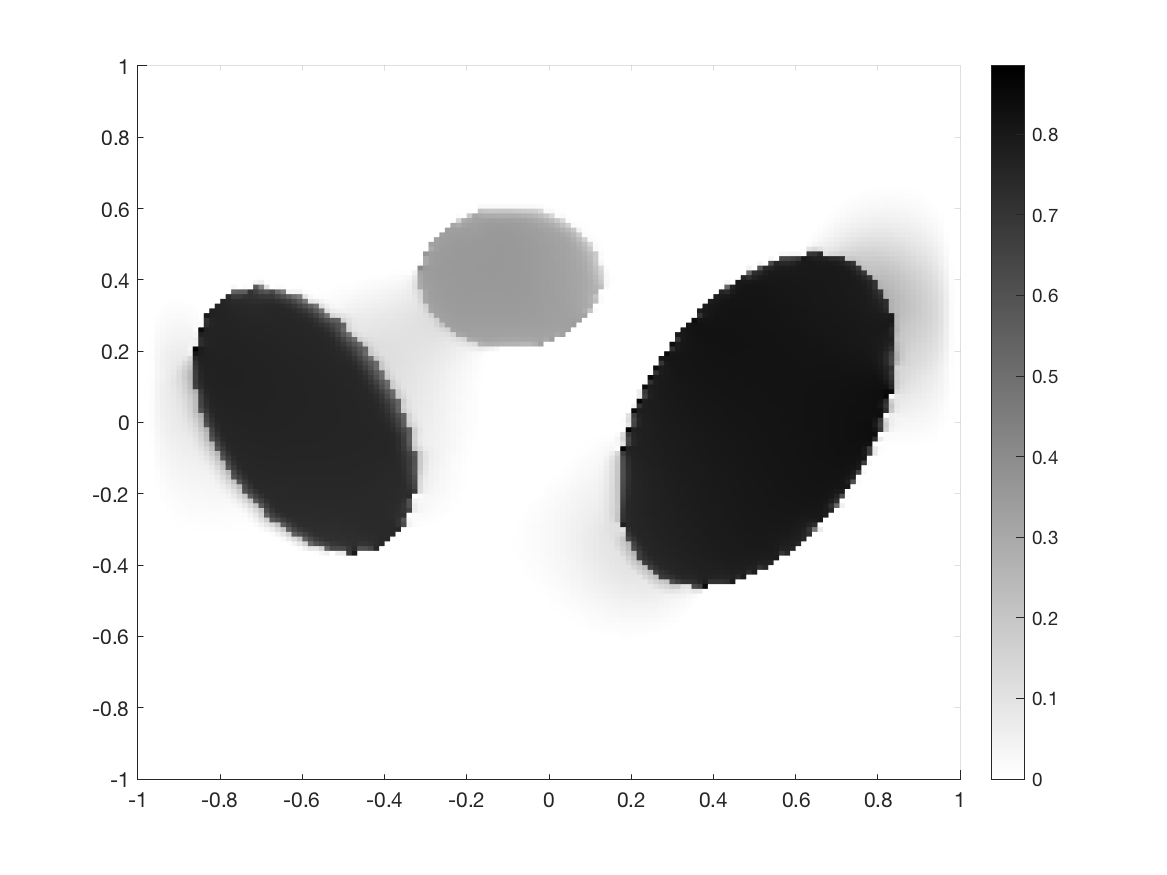}\label{heart_lung_recon3-3-01}}
\subfloat[ $\beta=0.3,\gamma=0.3,\delta=0.1$]{\includegraphics[width=0.35\textwidth]{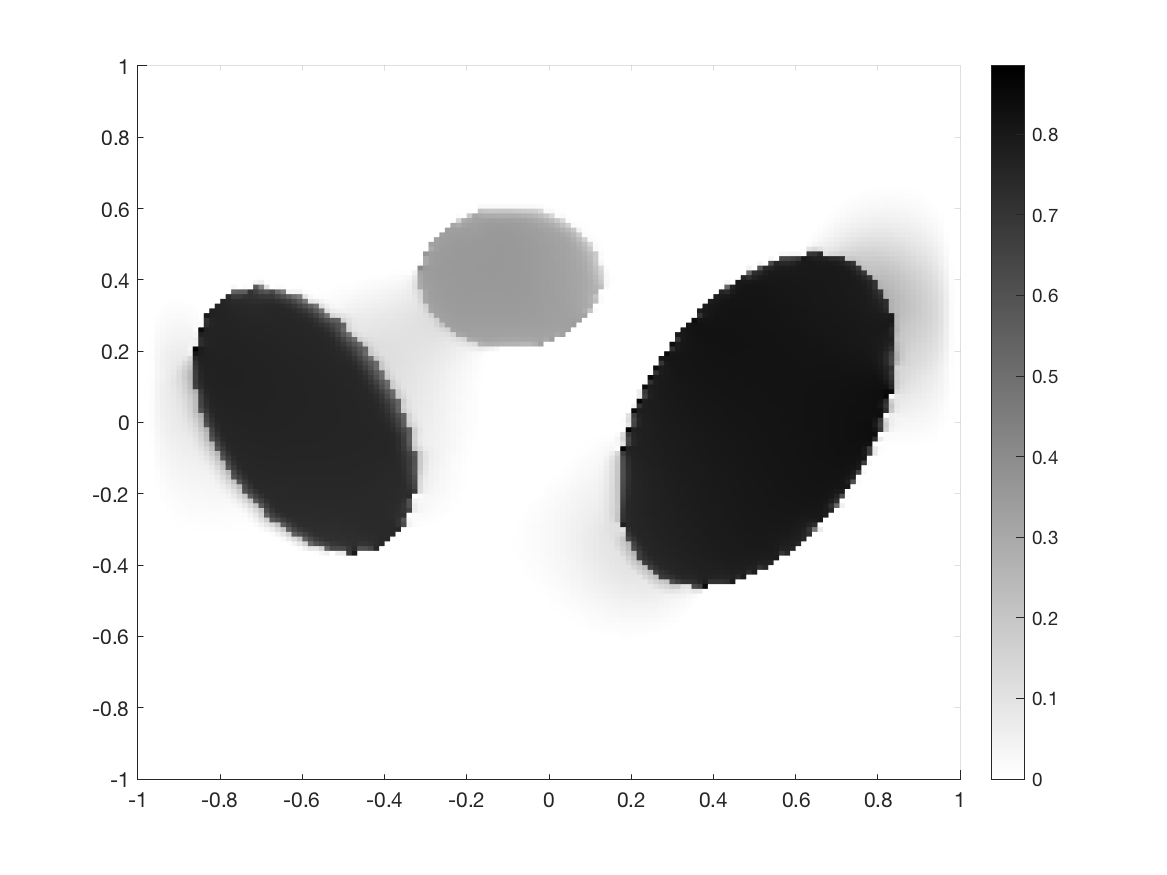}\label{heart_lung_recon3-3-1}}
\subfloat[ $\beta=0.03,\gamma=0.3,\delta=0.01$]{\includegraphics[width=0.35\textwidth]{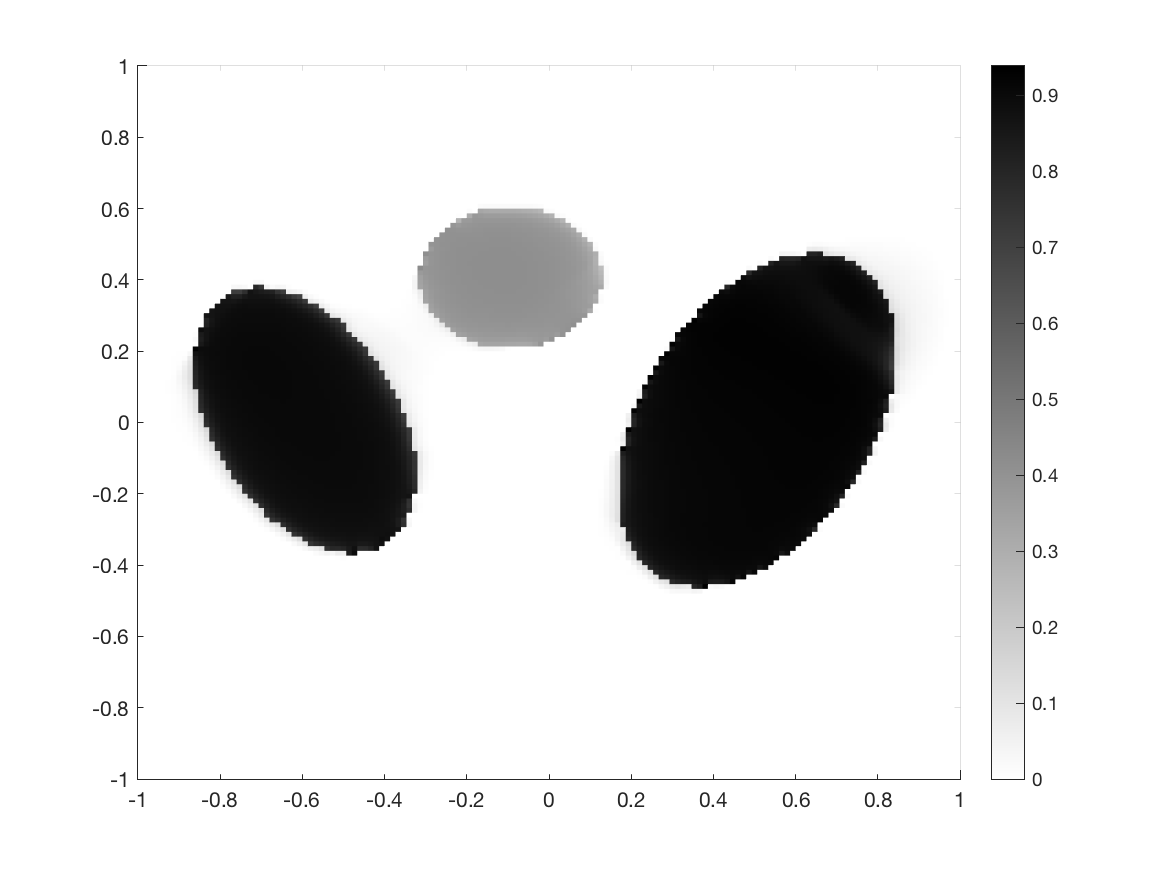}\label{heart_lung_recon03-3-01}}\\
\subfloat[ $\beta=0.03,\gamma=0.3,\delta=0.1$]{\includegraphics[width=0.35\textwidth]{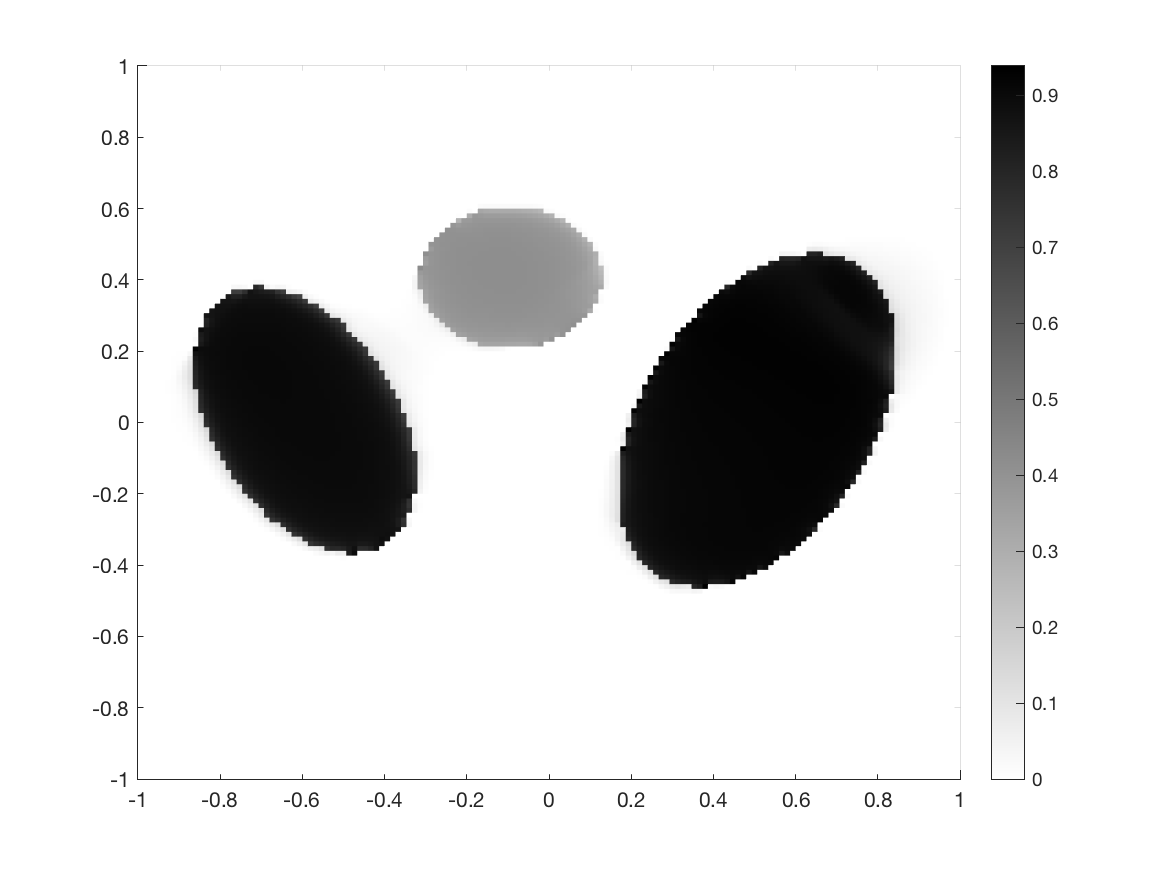}\label{heart_lung_recon03-3-1}}
\subfloat[Picard algorithm ]{\includegraphics[width=0.35\textwidth]{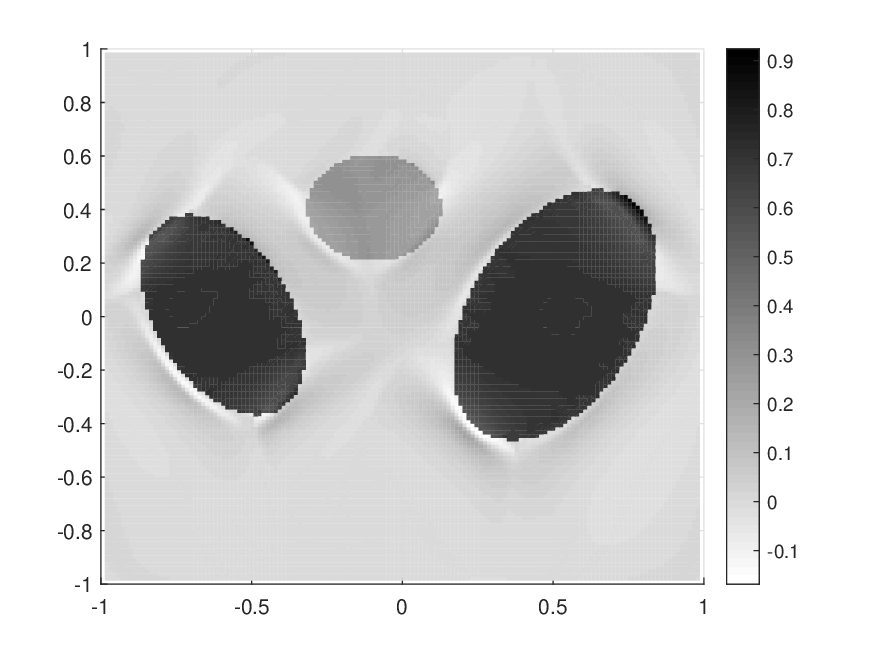}\label{heart_lung_recon_lin}}

    \caption{Test Case 2- The actual and reconstructed heart and lung phantom for various values of $\beta,\gamma,\delta$}
    \label{heart_lung}
  \end{figure}
  From Figure \ref{heart_lung_recon3-01-01}, we observe that with a small value of the $L^1$ regularization parameter $\gamma = 0.01$ in comparison to a higher $L^2$ regularization parameter $\beta = 0.3$, there are artifacts present in the image. Increasing the value of $\gamma$ to 0.1, keeping $\beta$ fixed results in a decrease of the artifacts as seen in Figure \ref{heart_lung_recon3-1-01}. A further increase in the value of $\gamma$ to 0.3 results in a significant reduction of artifacts as observed in Figure \ref{heart_lung_recon3-3-01}.  In Figure \ref{heart_lung_recon03-3-01},  we obtain a reconstruction with high resolution and contrast with a decrease in $\beta$ to 0.03. This is expected because the phantom under consideration in this example is sparse. Thus, with a greater weight for $L^1$ regularization term, coupled with the denoising operator controlled by the parameter $c$, described in Sec \ref{sec:VIP}, helps in removing artifacts and, thus, promotes high contrast.  We further note from Figures \ref{heart_lung_recon3-3-1} and \ref{heart_lung_recon03-3-1}  that changing the value of the Perona-Malik regularization term $\delta$ to 0.1 does not visibly change the resolution of the edges. This is because the term $\delta$ is linked to the denoising parameter $c$ that is fixed to be 0.001. For this choice of $c$, choosing $\delta = 0.01$ is sufficient to guarantee sharp edges.
 We further compare our results to the Picard algorithm proposed in \cite{Hoff2014}. The corresponding reconstruction is shown in Figure \ref{heart_lung_recon_lin}. It can be seen that the CDII-SR scheme provides a better contrast image, yet maintaining the same resolution as that of the Picard scheme. Also, there are far more artifacts through the Picard reconstruction method whereas the sparsity assumption and the $H^1$ denoising in CDII-SR scheme results in an image with very less artifacts.

 Further, to test the robustness of our method, we introduce $10\%$ and $25\%$ multiplicative Gaussian noises in the interior data $H^\delta$, which is fed as input to our CDII-SR algorithm. The corresponding reconstructions are shown in Figure \ref{heart_lung_recon_noise}. We also plot the reconstructions obtained with the Picard algorithm.

\begin{figure}[H]
\centering
\subfloat[ $\beta=0.03,\gamma=0.3,\delta=0.01$, \newline $10\%$ noise]{\includegraphics[width=0.4\textwidth]{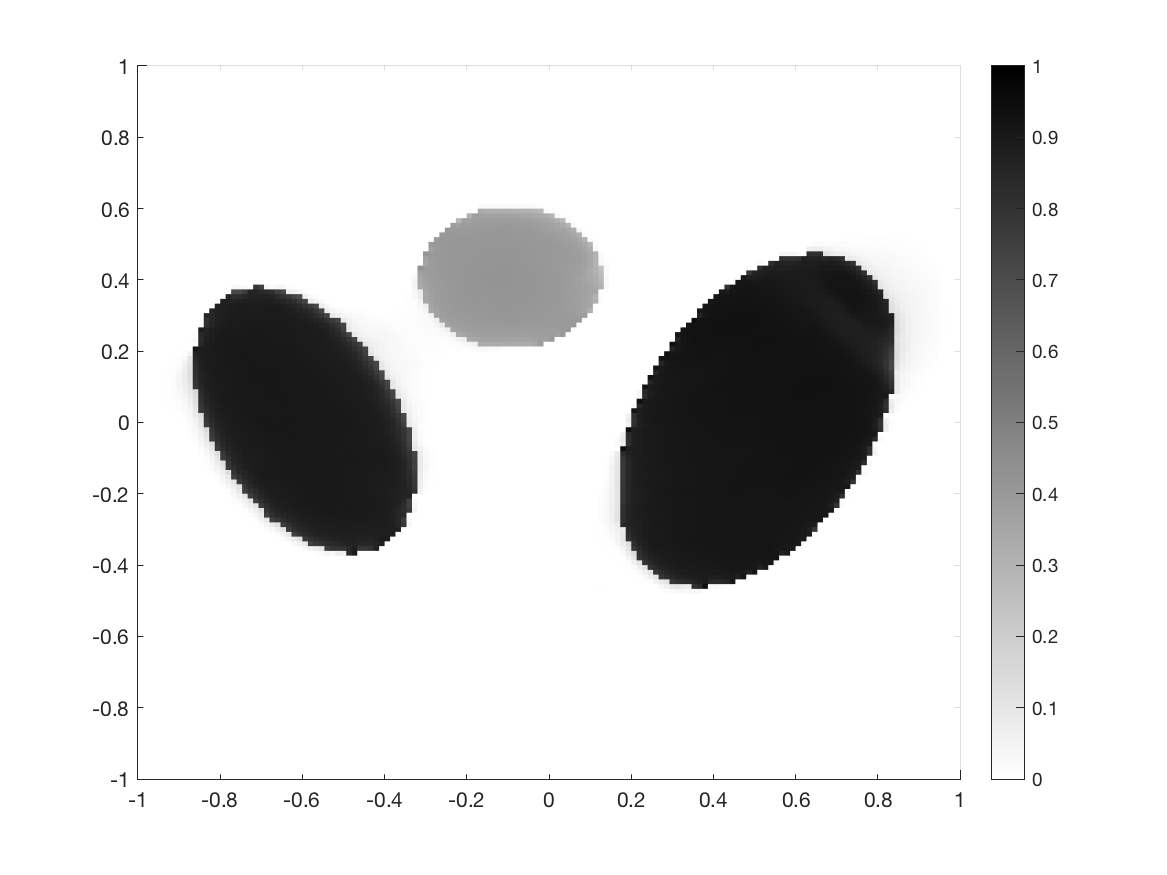}\label{heart_lung_recon03-3-01-10}}
\subfloat[ $\beta=0.05,\gamma=0.5,\delta=0.1$, \newline $25\%$ noise]{\includegraphics[width=0.4\textwidth]{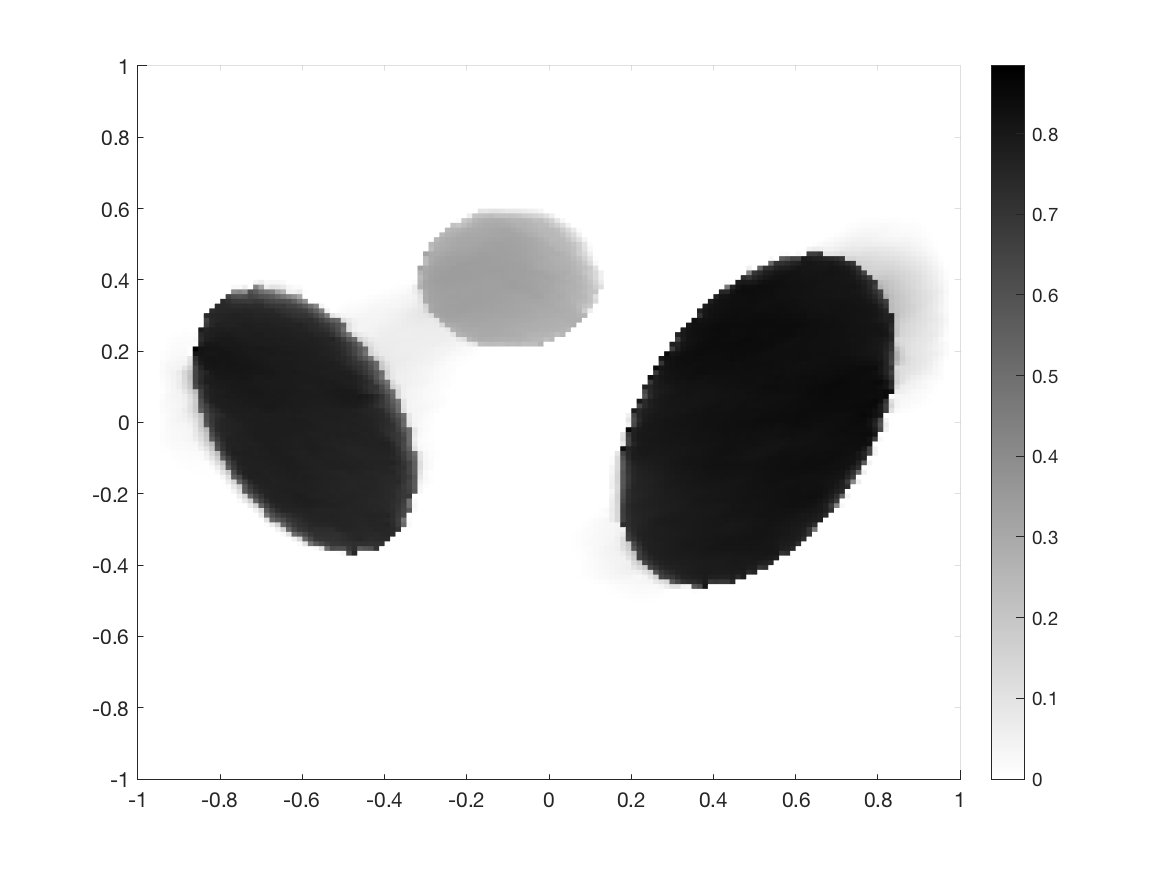}\label{heart_lung_recon03-3-01-25}}\\
\subfloat[Picard algorithm, $10\%$ noise]{\includegraphics[width=0.4\textwidth]{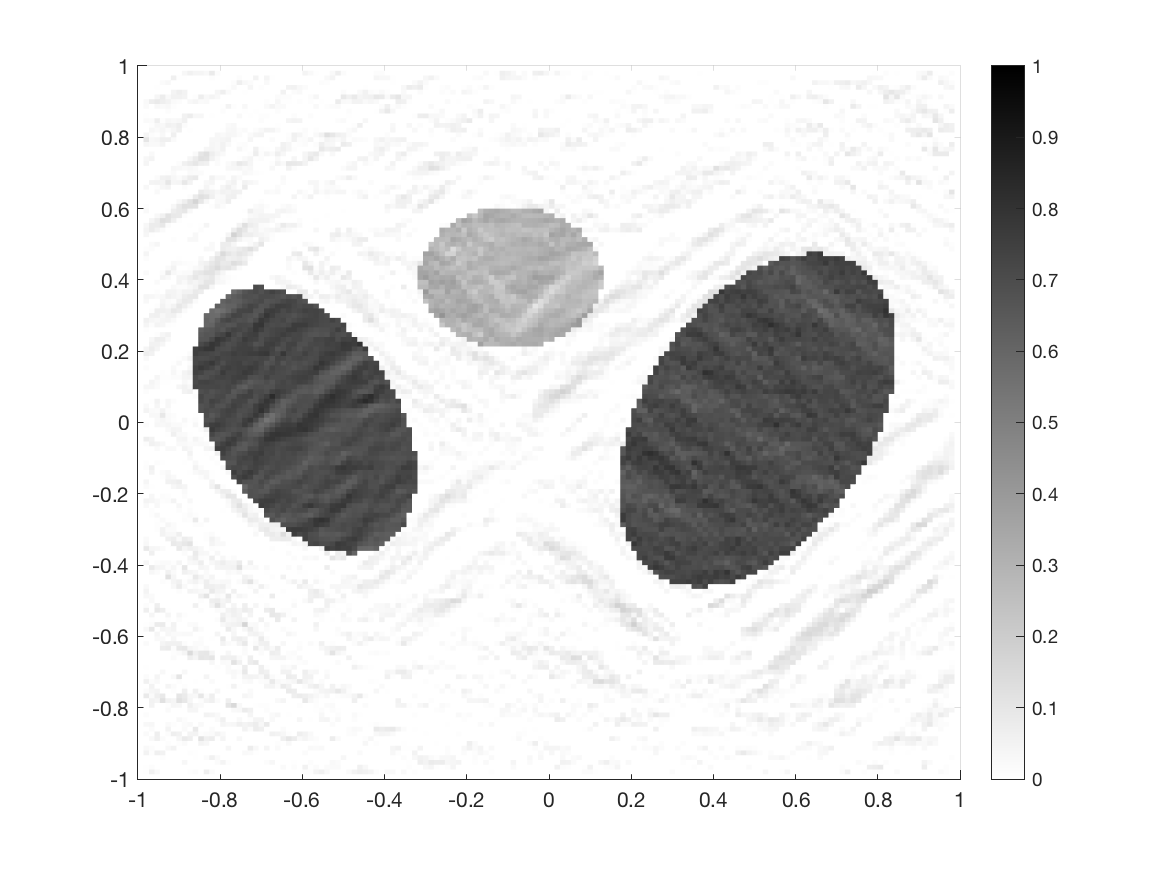}\label{heart_lung_recon_lin_10}}
\subfloat[Picard algorithm, $25\%$ noise]{\includegraphics[width=0.4\textwidth]{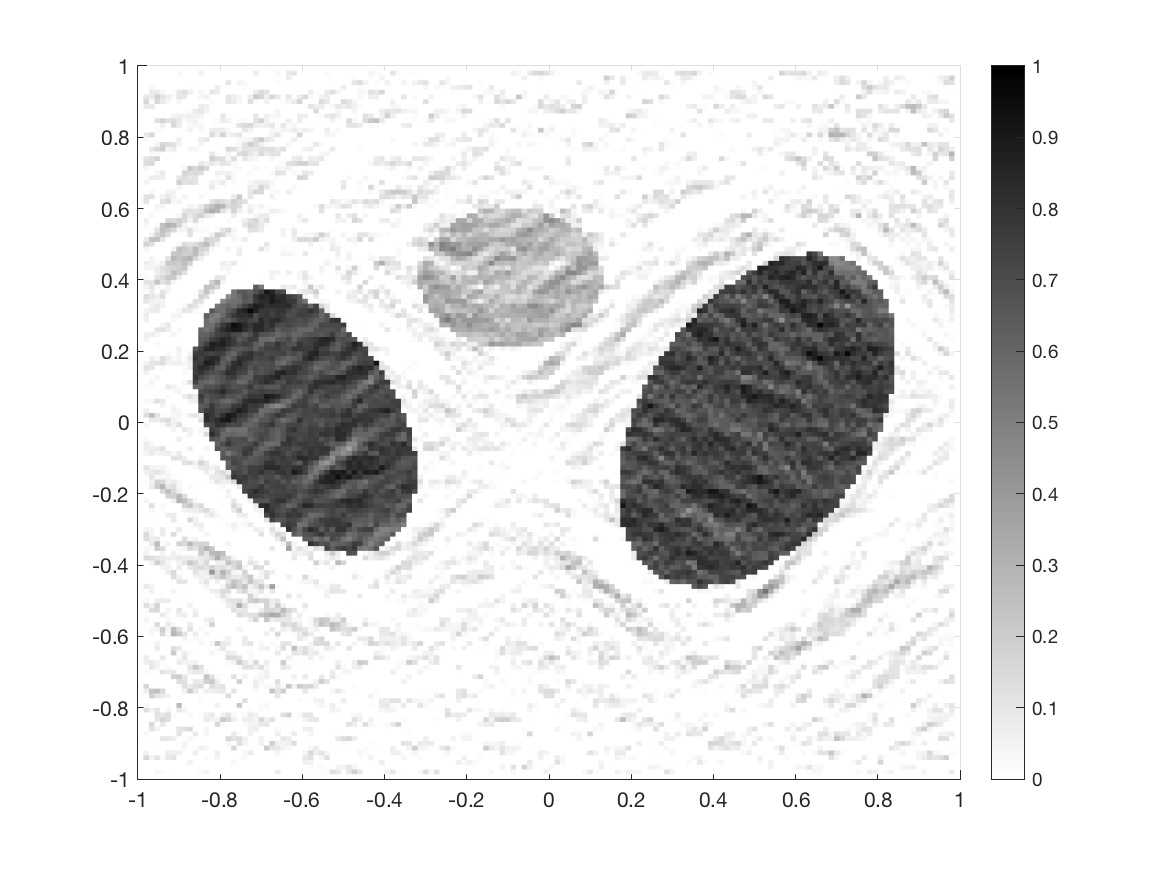}\label{heart_lung_recon_lin_25}}

    \caption{Test Case 2- The reconstructed heart and lung phantom for various values of noise in the interior data}
    \label{heart_lung_recon_noise}
  \end{figure}

We note from Figure \ref{heart_lung_recon03-3-01-10} that even with $10\%$ noise, we still have a high contrast and high resolution reconstruction, which demonstrates the robustness of our method. When the noise percentage was increased to $25\%$, it led to appearance of some streak artifacts, as seen in Figure \ref{heart_lung_recon03-3-01-25}. For this purpose, we used a higher value of $L^1$ regularization parameter $\gamma = 0.5$ and denoising parameter $c=0.01$ to remove artifacts. Due to a higher value of $c$, we increase the value of $\delta=0.1$. We also used the Picard algorithm with the noisy data and the reconstructions showed visible presence of streak artifacts in Figure \ref{heart_lung_recon_lin_10} and \ref{heart_lung_recon_lin_25}.

In Test Case 3, we consider a combination of phantoms, where one is supported on a square annulus $S_a = \lbrace (x,y) \in \mathbb{R}^2: -0.8 < x < -0.7, -0.2 < x < -0.1,-0.8 < y < -0.7, -0.2 < y < -0.1  \rbrace $ with $\sigma=3.0$; 
the other one consists of 2 disks centered at $(0.7,0.7)$ with radius 0.2 and $\sigma=1.0$ and at $(0.55,0.55)$ with radius 0.15 and $\sigma=2.0$. The value of $\sigma$ inside the square annulus has a value -2.0. The plots of the actual and the reconstructed $\sigma$ are shown in Figure \ref{mixed}.

\begin{figure}[H]
\centering
\subfloat[Actual phantom]{\includegraphics[width=0.35\textwidth]{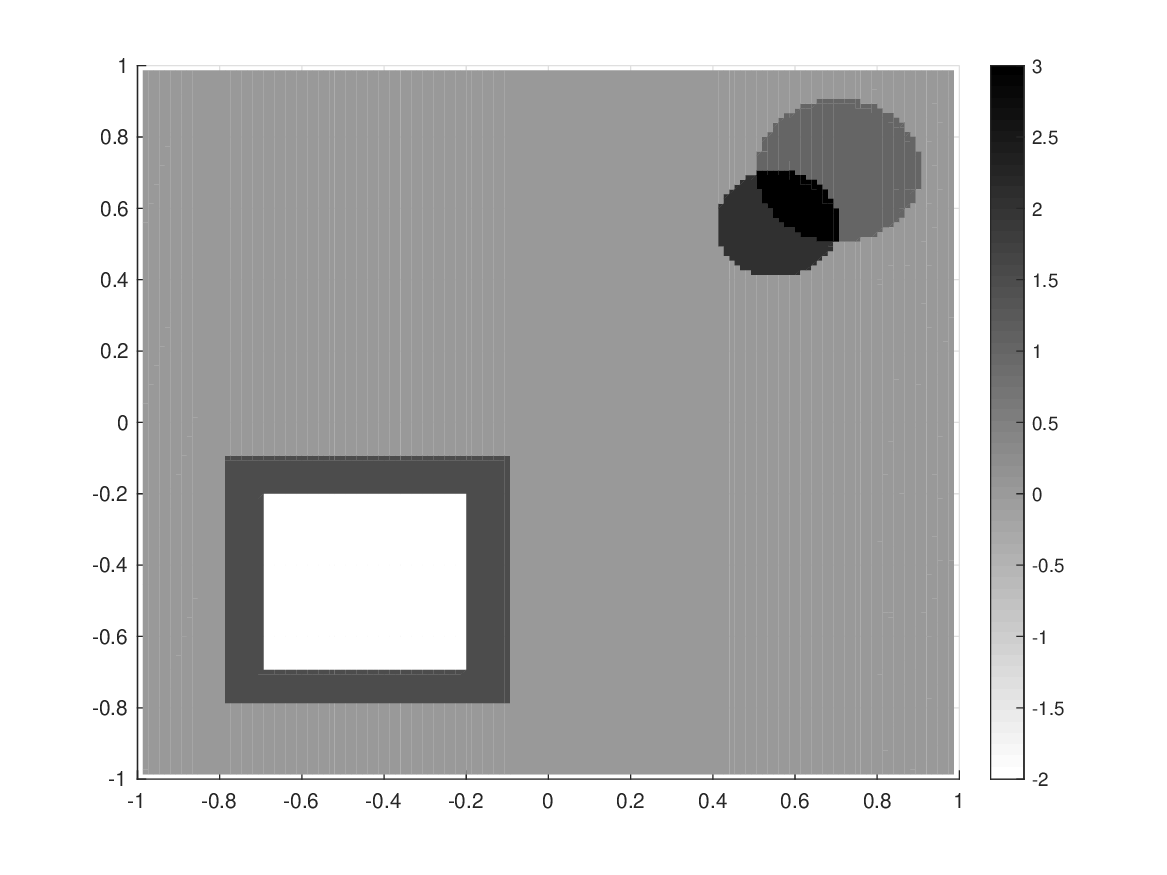}\label{mixed_actual}}
\subfloat[ $\beta=0.3,\gamma=0.01,\delta=0.01$]{\includegraphics[width=0.35\textwidth]{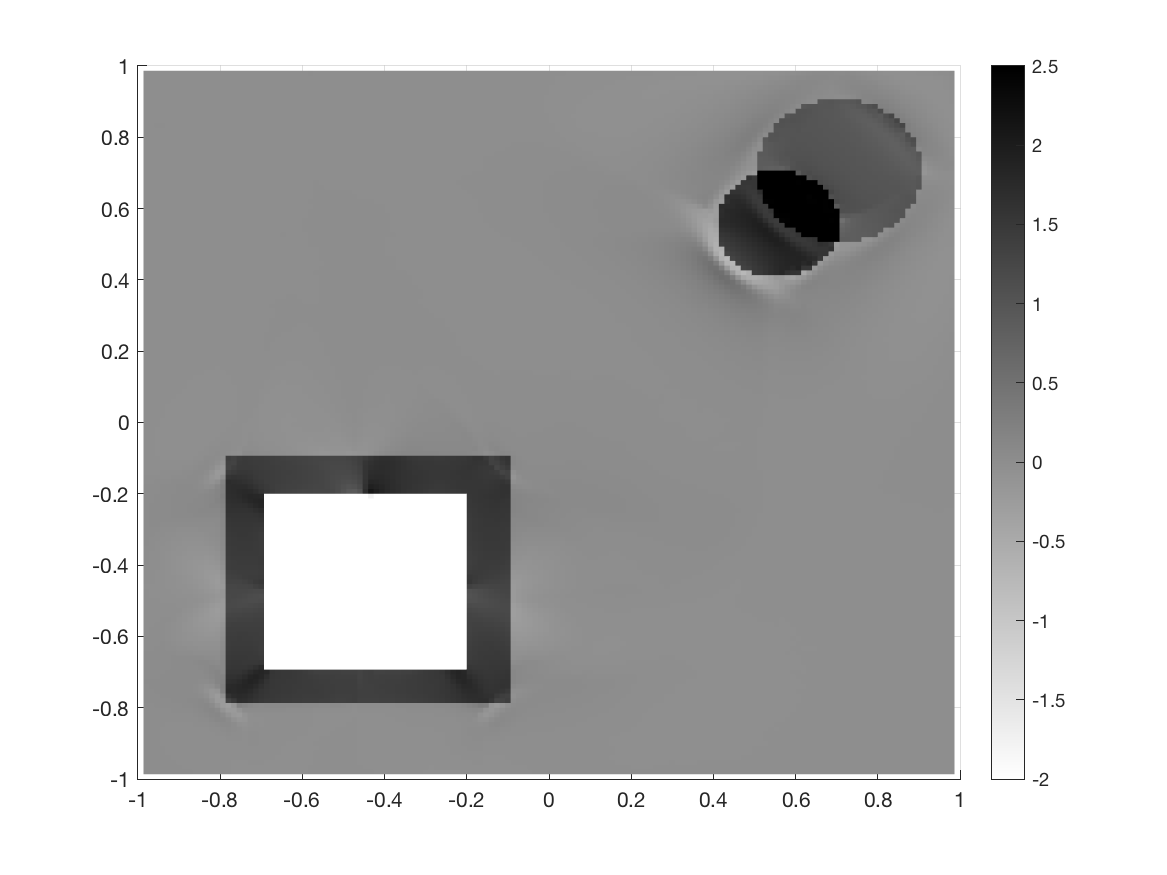}\label{mixed_recon3-01-01}}
\subfloat[ $\beta=0.3,\gamma=0.1, \delta=0.01$]{\includegraphics[width=0.35\textwidth]{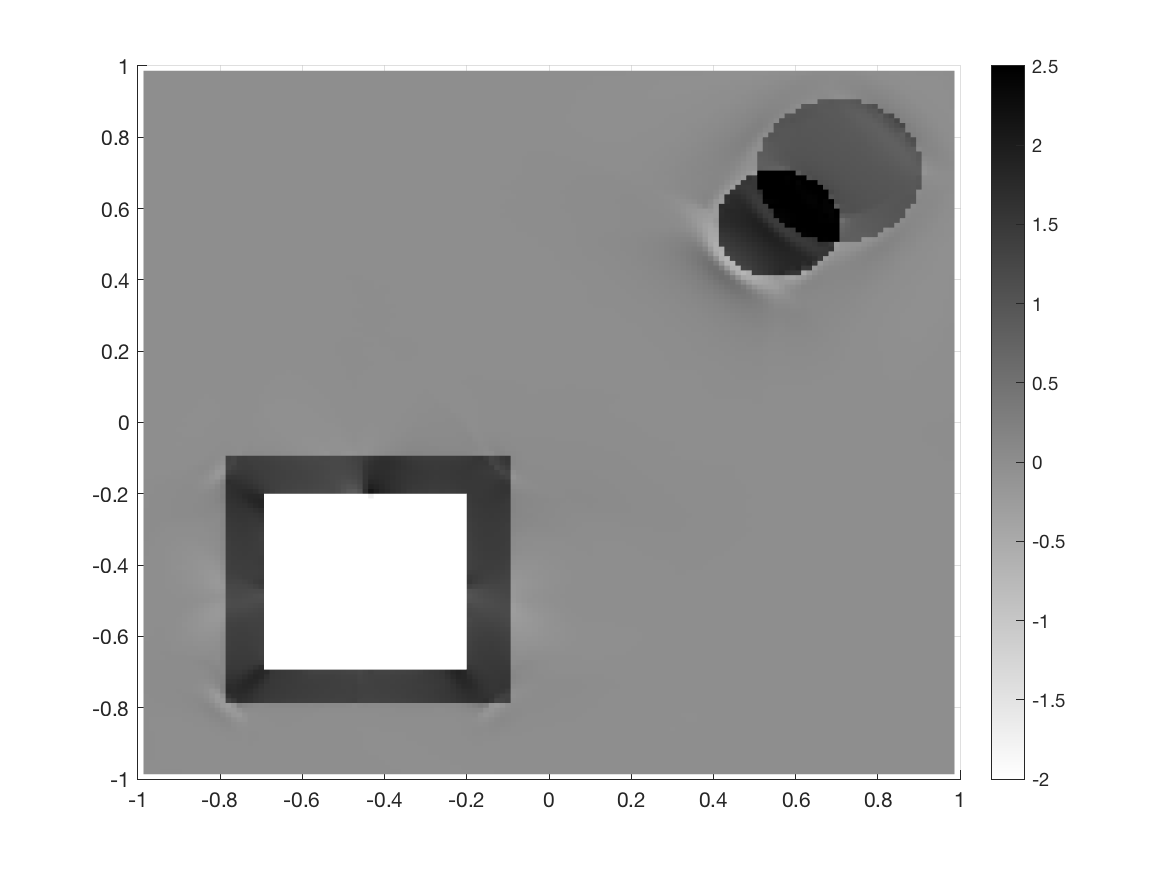}\label{mixed_recon3-1-01}}\\
\subfloat[ $\beta=0.3,\gamma=0.3,\delta=0.01$]{\includegraphics[width=0.35\textwidth]{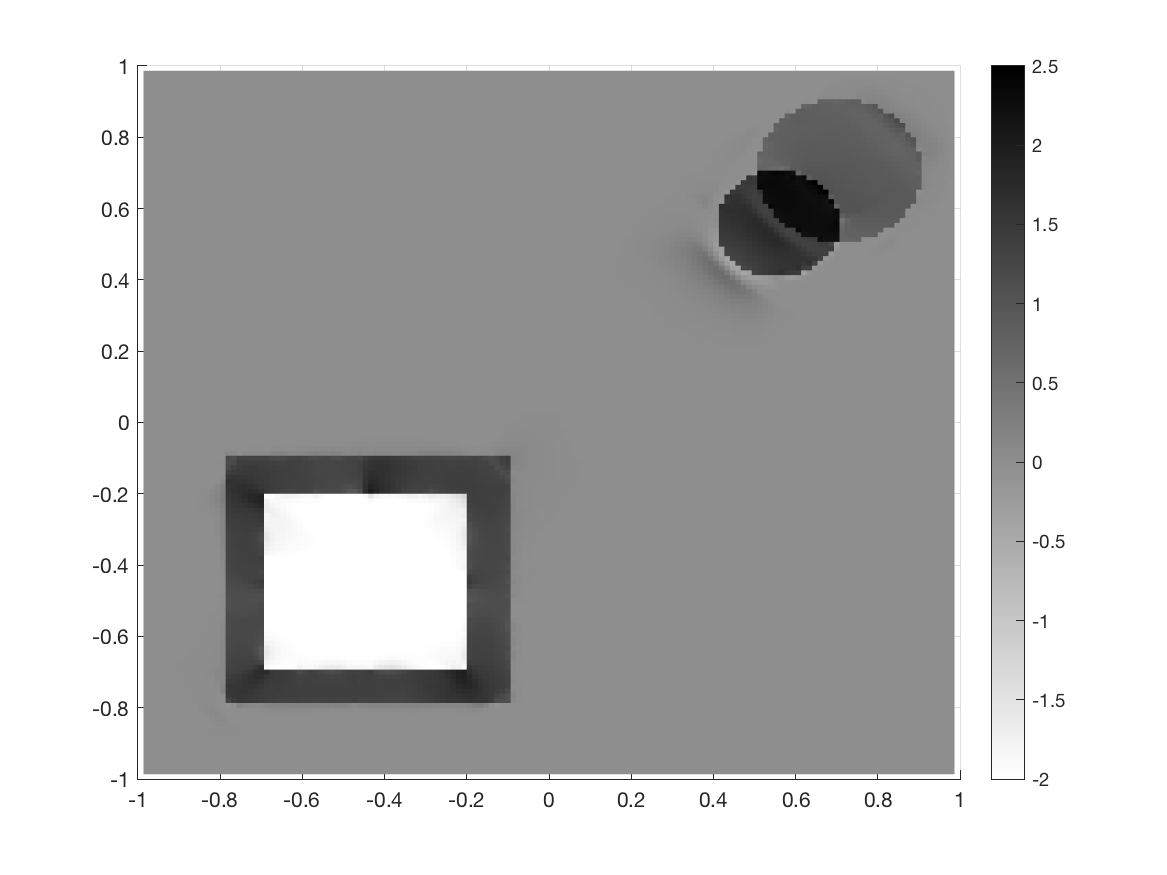}\label{mixed_recon3-3-01}}
\subfloat[ $\beta=0.3,\gamma=0.3,\delta=0.1$]{\includegraphics[width=0.35\textwidth]{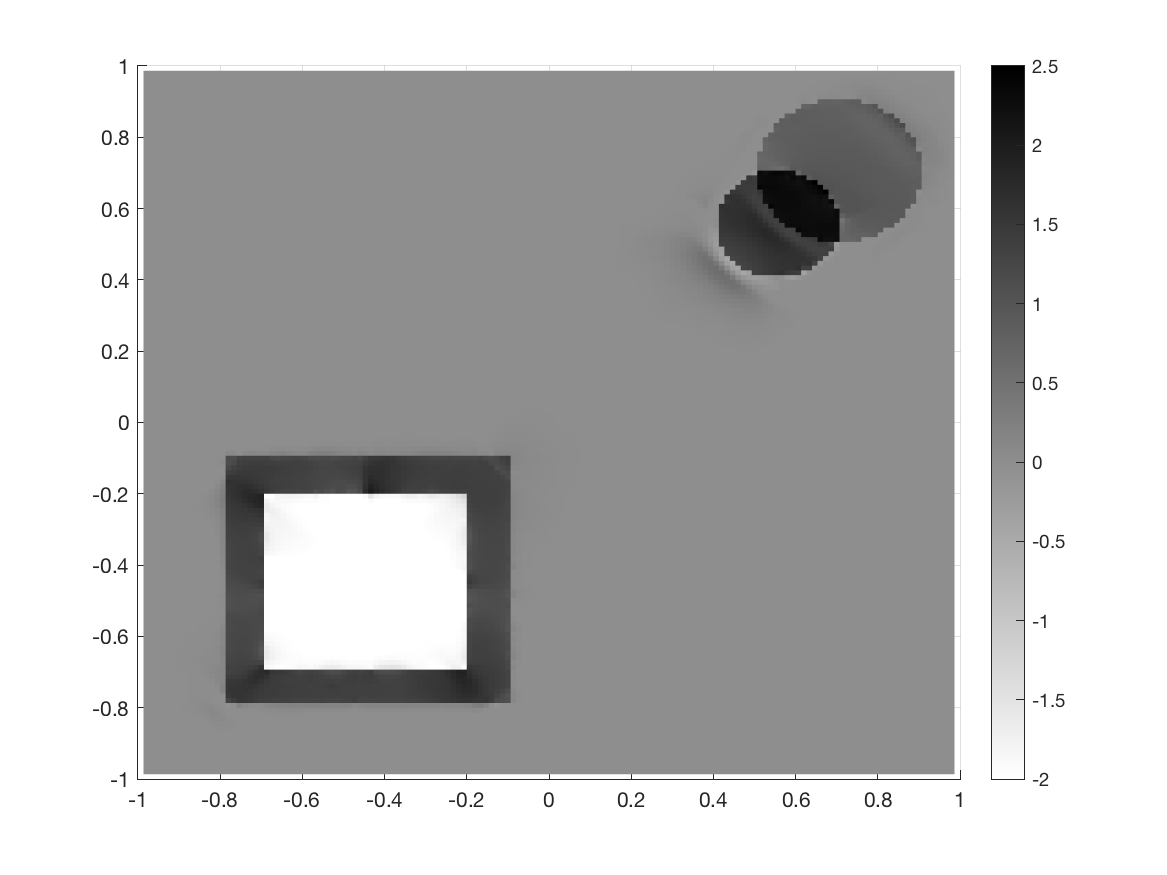}\label{mixed_recon3-3-1}}
\subfloat[ $\beta=0.03,\gamma=0.3,\delta=0.01$]{\includegraphics[width=0.35\textwidth]{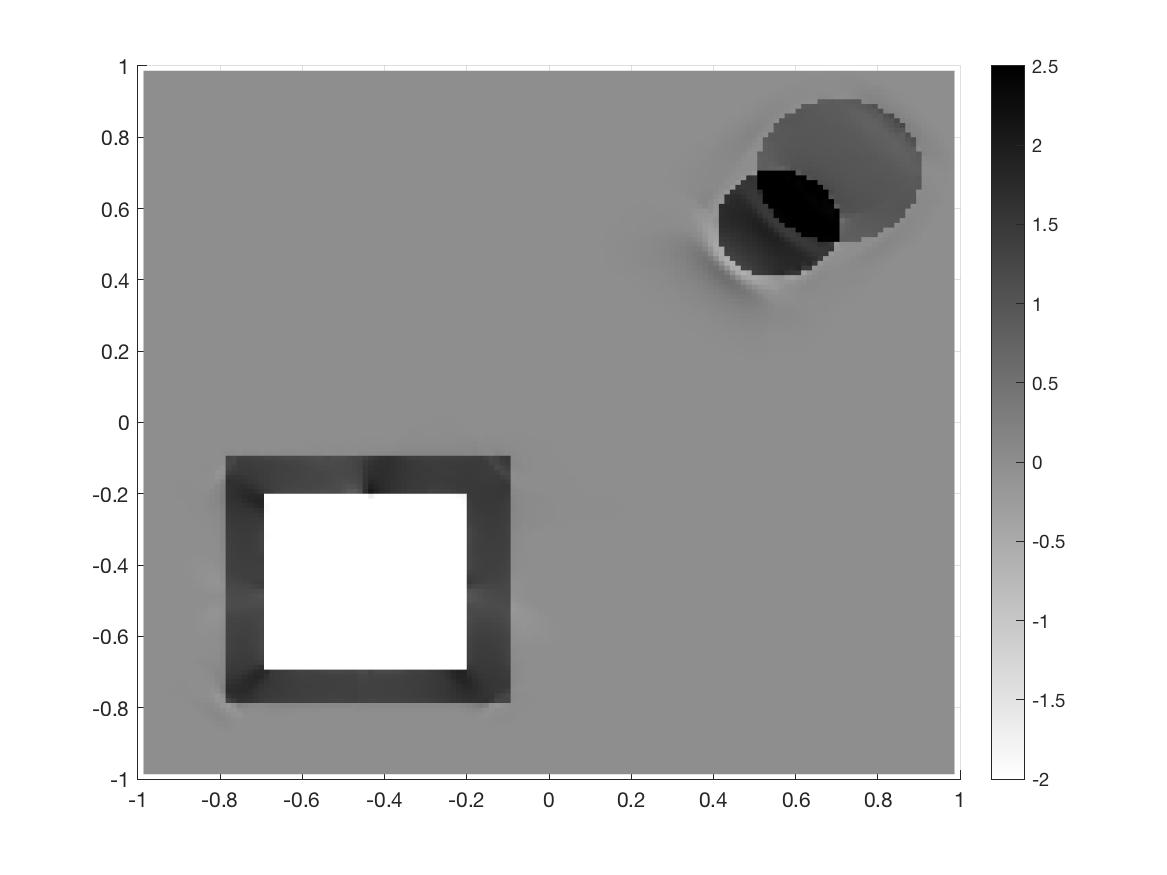}\label{mixed_recon03-3-01}}\\
\subfloat[ $\beta=0.03,\gamma=0.3,\delta=0.1$]{\includegraphics[width=0.35\textwidth]{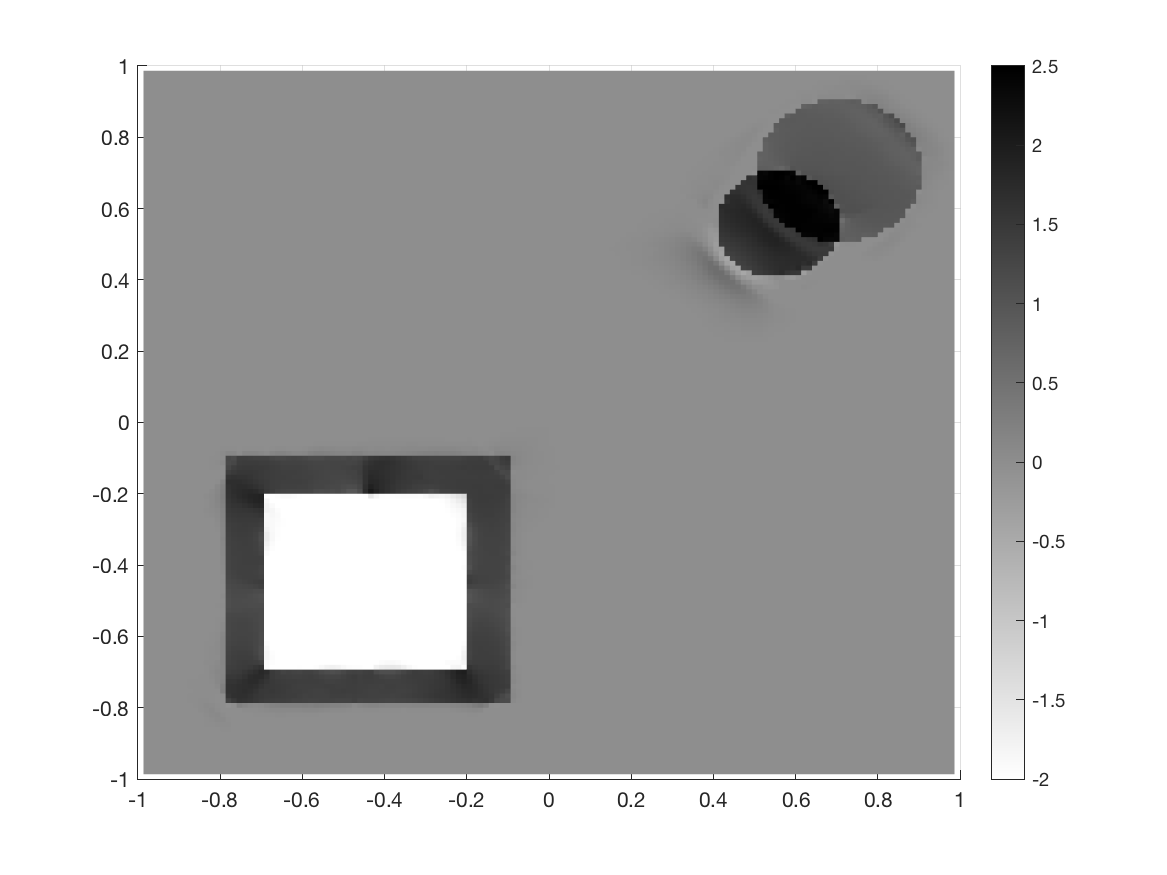}\label{mixed_recon03-3-1}}
\subfloat[Picard algorithm ]{\includegraphics[width=0.35\textwidth]{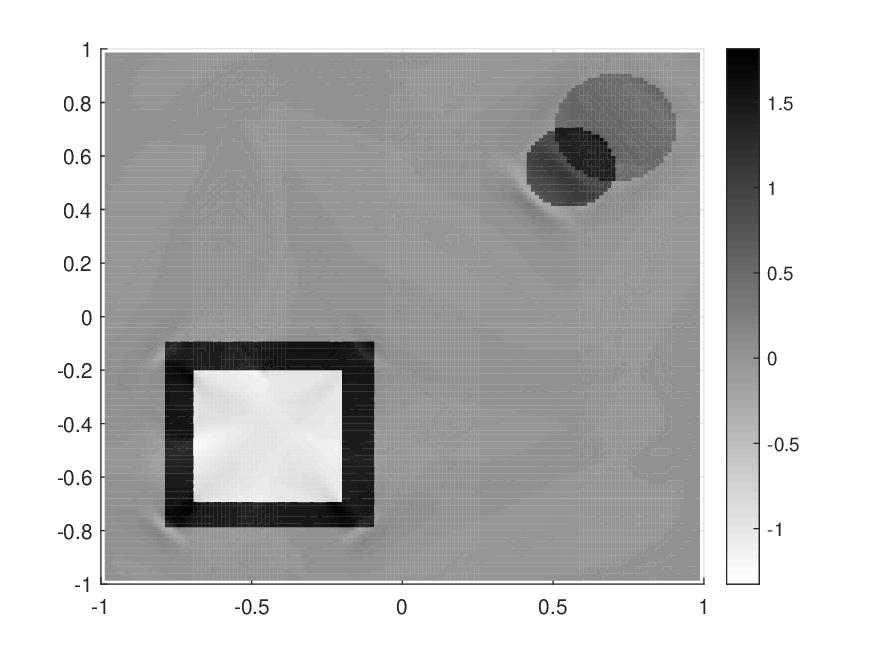}\label{mixed_recon_lin}}

    \caption{Test Case 3- The actual and reconstructed mixed phantom for various values of $\beta,\gamma,\delta$}
    \label{mixed}
  \end{figure}
  
We again observe a similar behavior as in Test Case 2. For the values of $\beta = 0.3$ and $\gamma = 0.01, 0.1$, we obtain reconstructions that have visible artifacts, especially at the corners of the square annulus. This can be seen in Figure \ref{mixed_recon3-01-01} and \ref{mixed_recon3-1-01}.  When $\gamma$ is increased to 0.3 and when $\beta$ is subsequently lowered to 0.03, we have almost no artifacts except for some in front of the smaller disk as seen in Figure \ref{mixed_recon03-3-01} and \ref{mixed_recon03-3-1}. We also observe that the values inside the annular hole and the inclusion due to the intersection of the two disks are very well reconstructed with nice contrast and high resolution. With the Picard method, we obtain a large number of artifacts and the value inside the hole is also not reconstructed well, thus, leading to loss of contrast as seen in Figure \ref{mixed_recon_lin}.
 
We also test the performance of our method in presence of $10\%$ and $25\%$ Gaussian noise in the interior data. The results are shown in Figure \ref{mixed_recon_noise}. 
\begin{figure}[H]
\centering
\subfloat[ $\beta=0.03,\gamma=0.3,\delta=0.01$, \newline $10\%$ noise]{\includegraphics[width=0.4\textwidth]{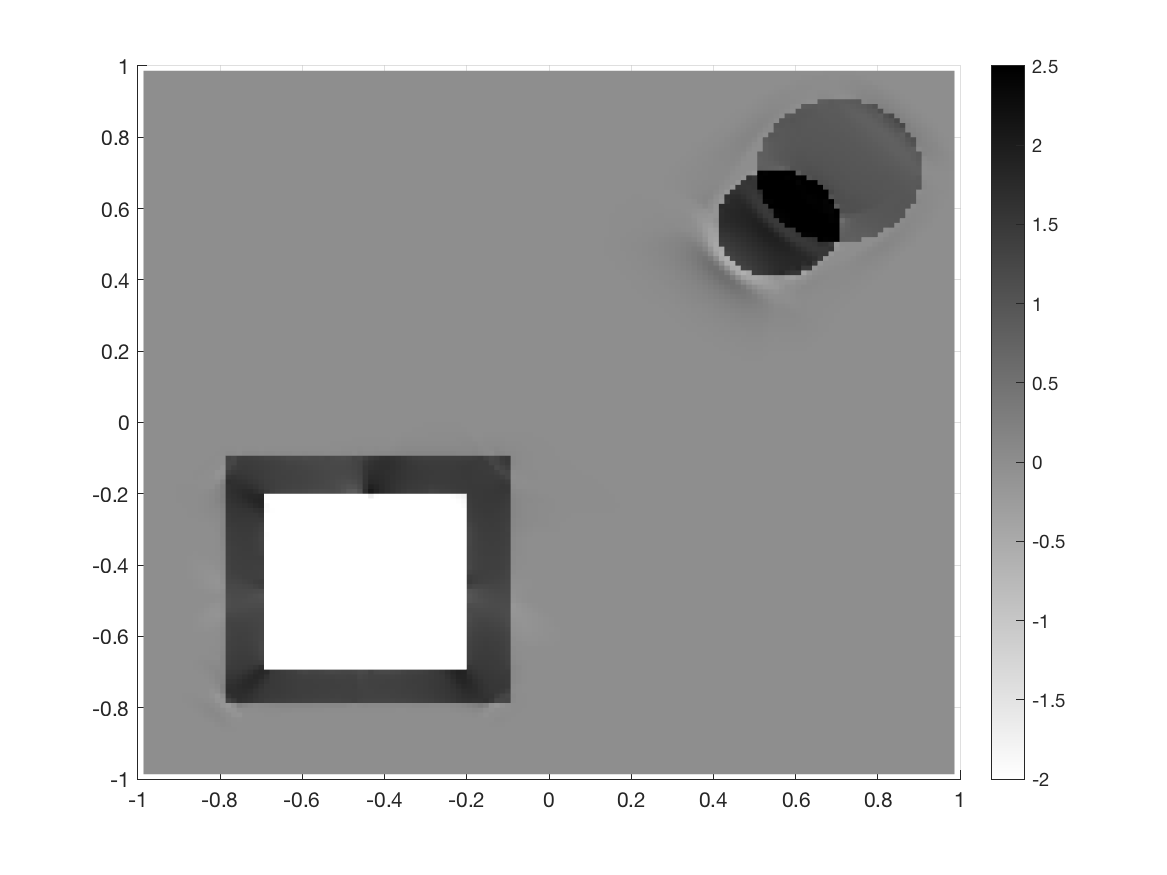}\label{mixed_recon03-3-01-10}}
\subfloat[ $\beta=0.05,\gamma=0.5,\delta=0.1$, \newline $25\%$ noise]{\includegraphics[width=0.4\textwidth]{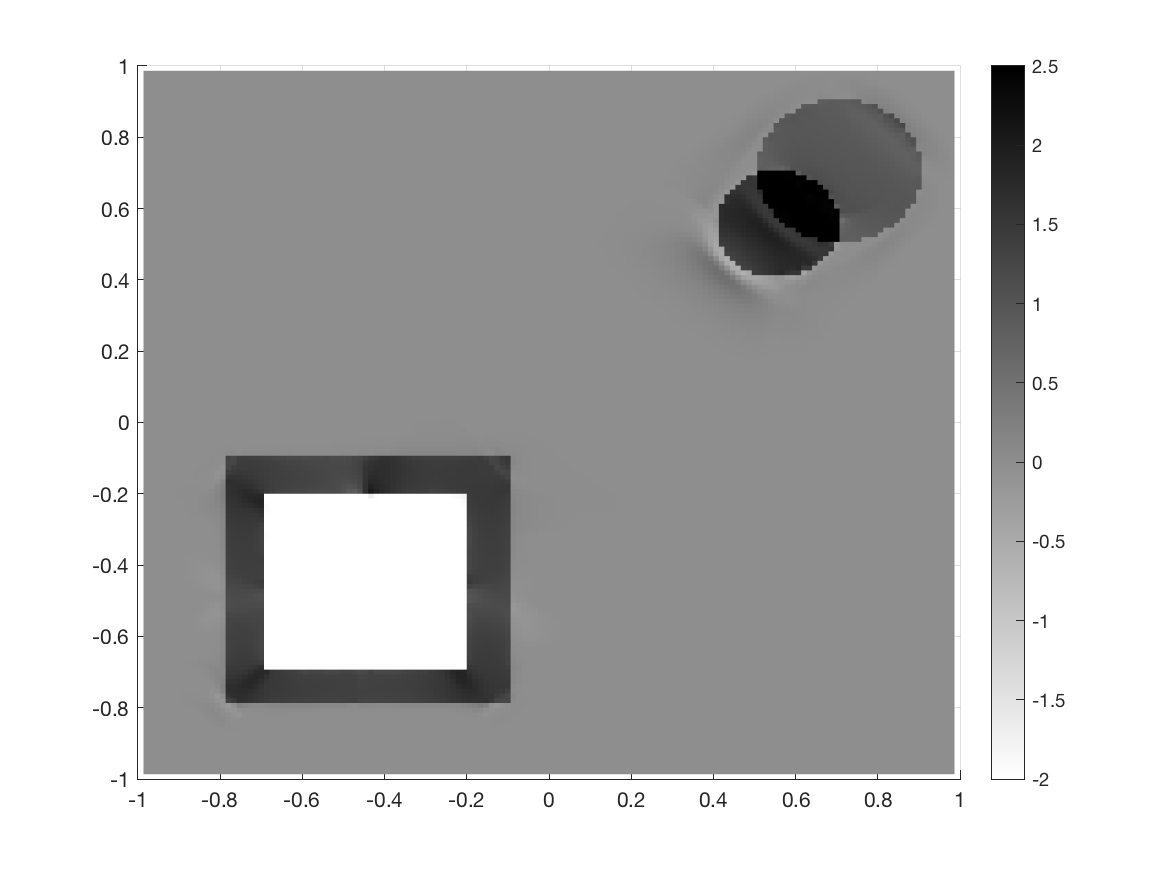}\label{mixed_recon05-5-01-25}}\\
\subfloat[Picard algorithm, $10\%$ noise]{\includegraphics[width=0.4\textwidth]{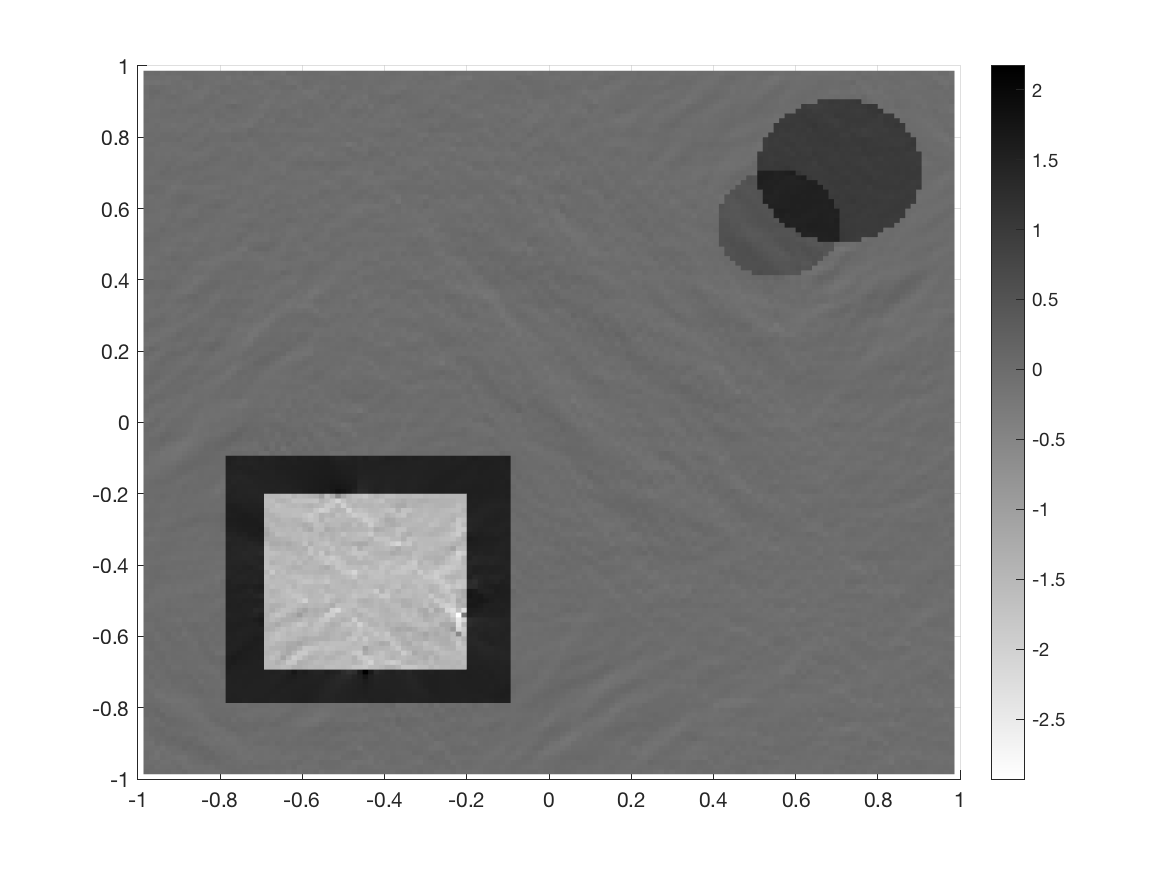}\label{mixed_recon_lin_10}}
\subfloat[Picard algorithm, $25\%$ noise]{\includegraphics[width=0.4\textwidth]{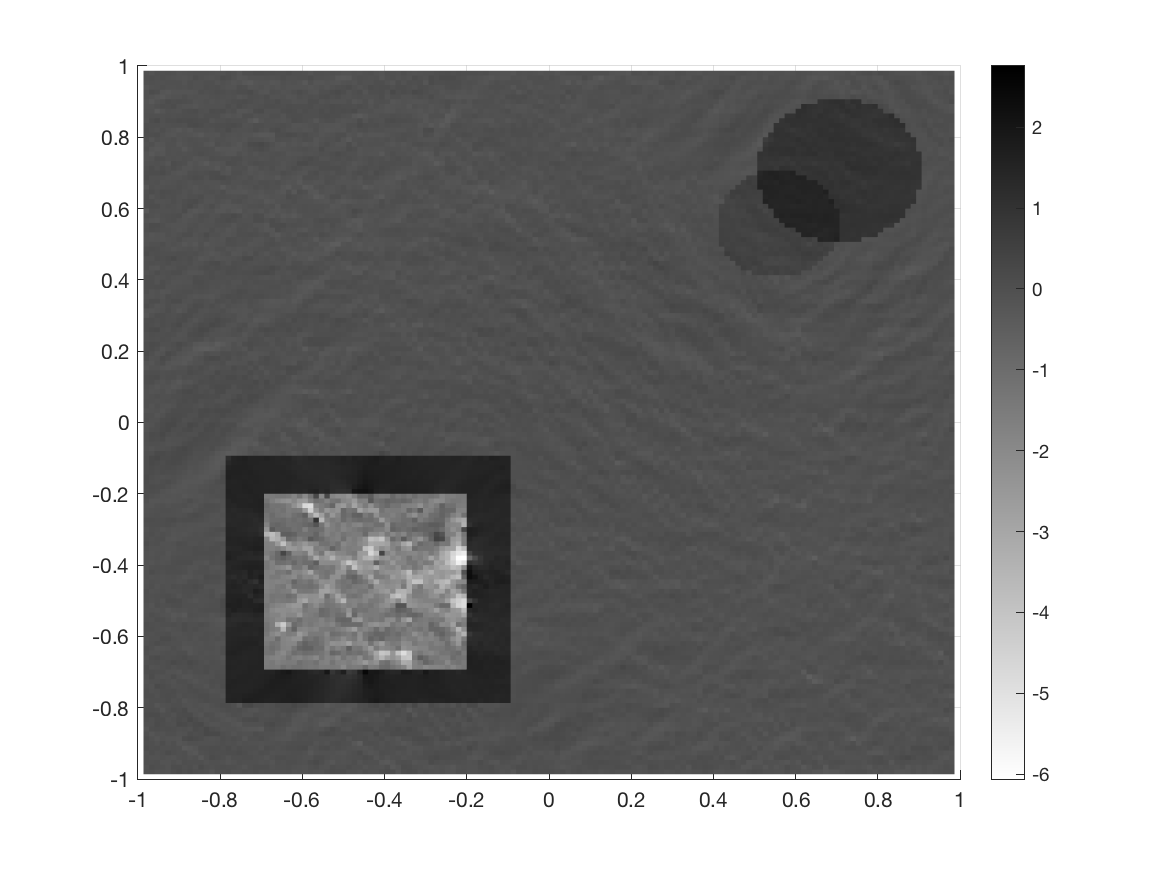}\label{mixed_recon_lin_25}}

    \caption{Test Case 3- The reconstructed mixed phantom for various values of noise in the interior data}
    \label{mixed_recon_noise}
  \end{figure}
  
  We again observe that our method gives superior quality reconstructions as shown in Figures \ref{mixed_recon03-3-01-10} and \ref{mixed_recon05-5-01-25}. As with Test Case 2, we need greater values of $\beta,\gamma,\delta,c$ to obtain high contrast and high resolution images. From Figures \ref{mixed_recon_lin_10} and \ref{mixed_recon_lin_25}, we see that, in comparison to the reconstructions with the CDII-SR scheme, the Picard scheme does not provide good reconstructions of objects with holes and inclusions in presence of noise. In particular, the region inside the hole contains a large number of artifacts. This shows the robustness and efficiency of our method in the reocnstructing objects with holes and inclusions.

In this final Test Case 4, we consider another combination of phantoms that result in a lesser sparsity pattern. This experiment is conducted to demonstrate the effect of the $L^1$ regularization term in improving contrast. The first phantom is supported on a square annulus $S_a = \lbrace (x,y) \in \mathbb{R}^2: -0.8 < x < -0.7, -0.2 < x < -0.1,-0.8 < y < -0.7, -0.2 < y < -0.1  \rbrace $ with $\sigma=3.0$. The value of $\sigma$ inside the square annulus has a value -1.5. The second phantom consists of 4 disks: the first centered at $(0.7,0.7)$ with radius 0.2 and $\sigma=1.0$, the second centered at $(0.55,0.55)$ with radius 0.15 and $\sigma=2.0$, the third centered at $(0,0)$ with radius 0.25 and $\sigma=1.5$ and the last centered at $(0.05,0.6)$ with radius 0.2 and $\sigma=2.5$. The final phantom consists of two heart-shaped objects, represented by cardioids, with the value of $\sigma=4$ in the bigger one and value of $\sigma=3.0$ in the smaller one.  The plots of the actual and the reconstructed $\sigma$ are shown in Figure \ref{mixed2}.

\begin{figure}[H]
\centering
\subfloat[Actual phantom]{\includegraphics[width=0.35\textwidth]{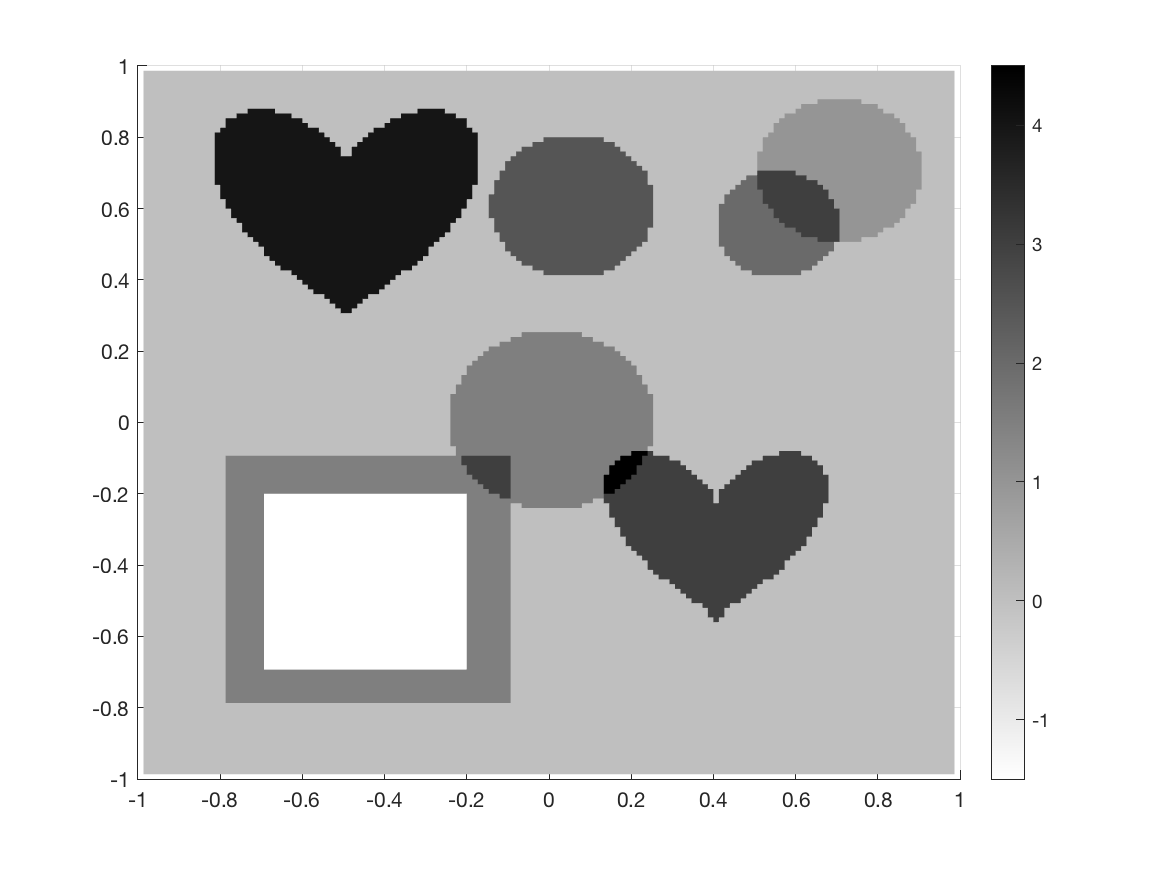}\label{mixed2_actual}}
\subfloat[ $\beta=0.3,\gamma=0.01,\delta=0.01$]{\includegraphics[width=0.35\textwidth]{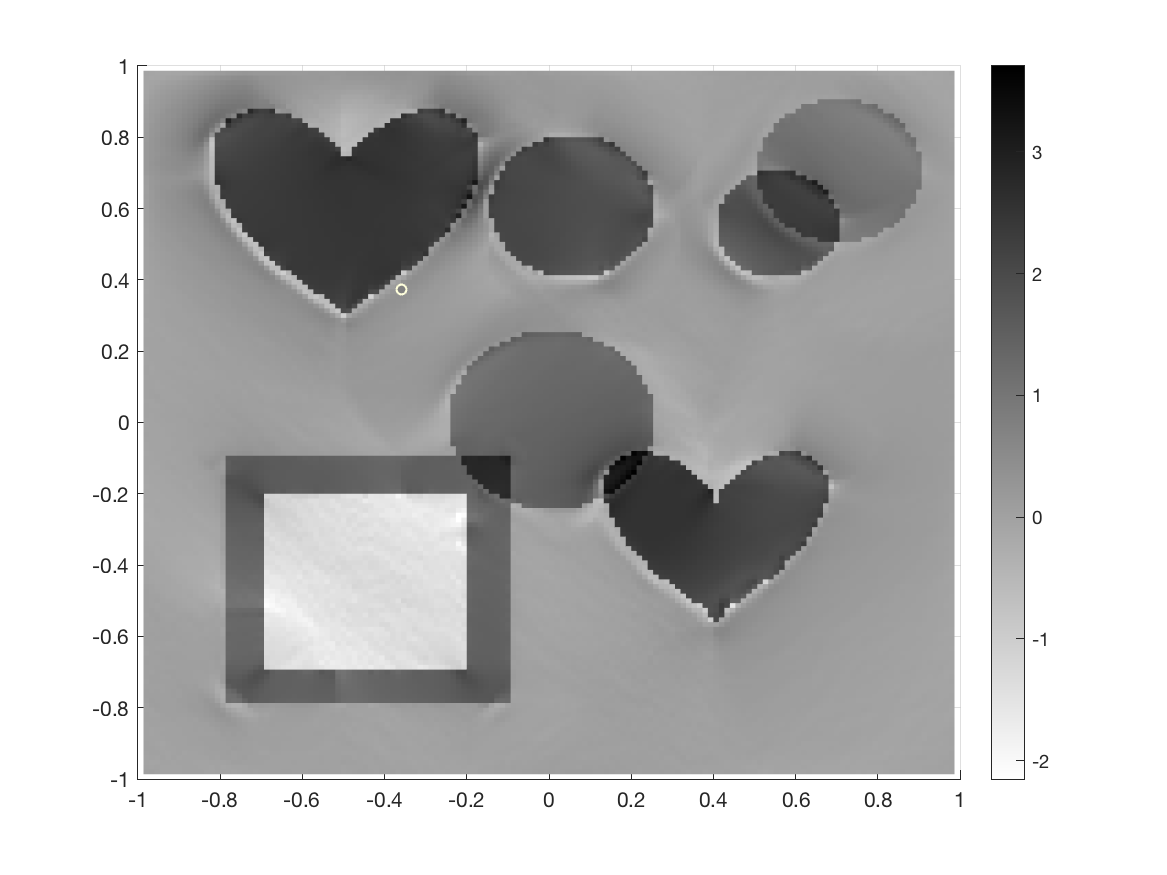}\label{mixed2_recon3-01-01}}
\subfloat[ $\beta=0.3,\gamma=0.3, \delta=0.01$]{\includegraphics[width=0.35\textwidth]{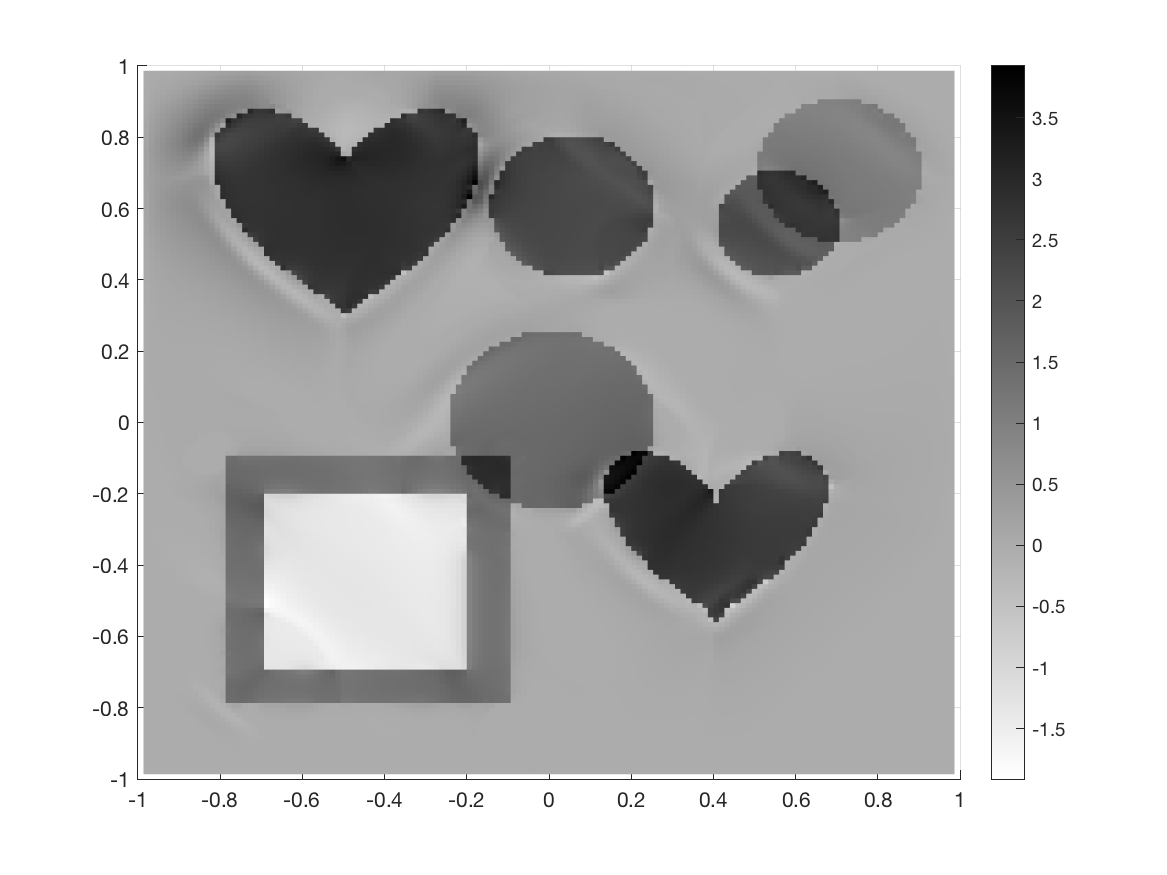}\label{mixed2_recon3-3-01}}\\
\subfloat[ $\beta=0.3,\gamma=0.5,\delta=0.01$]{\includegraphics[width=0.35\textwidth]{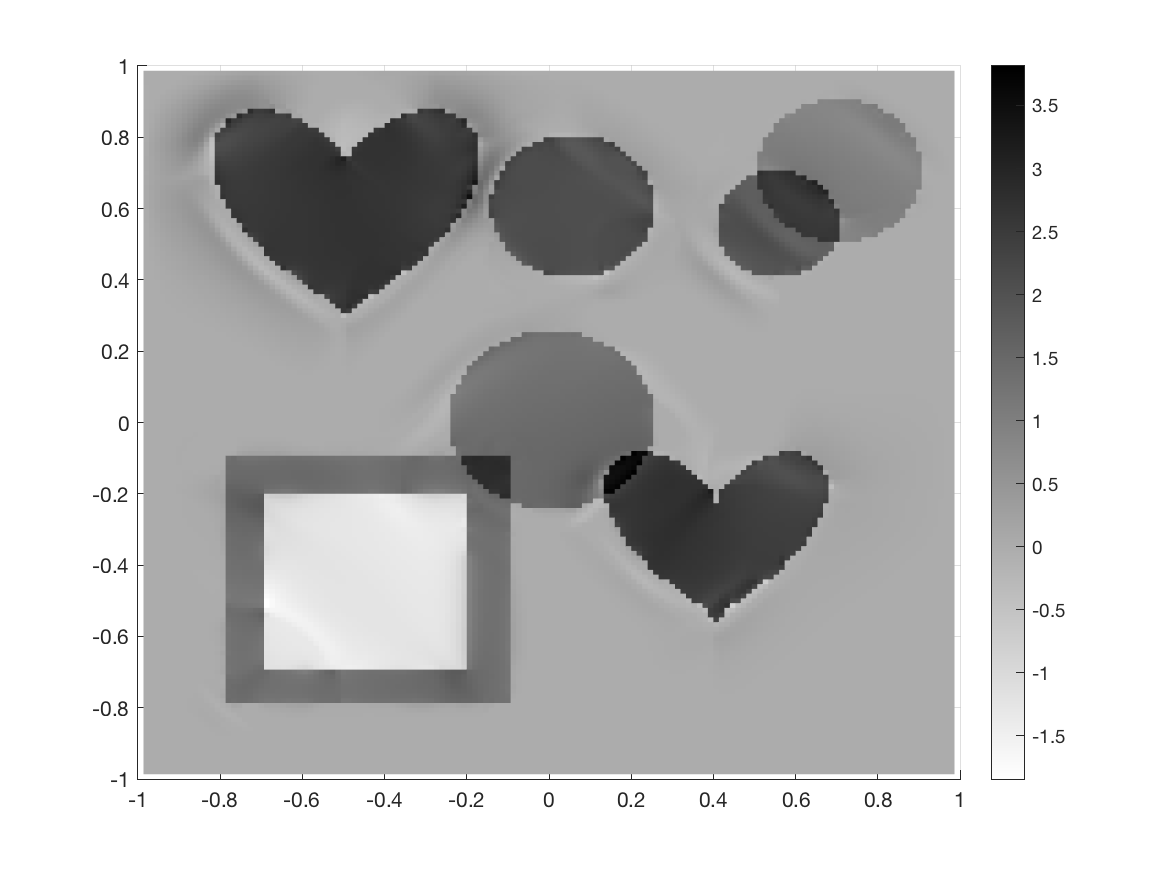}\label{mixed2_recon3-5-01}}
\subfloat[Picard algorithm ]{\includegraphics[width=0.35\textwidth]{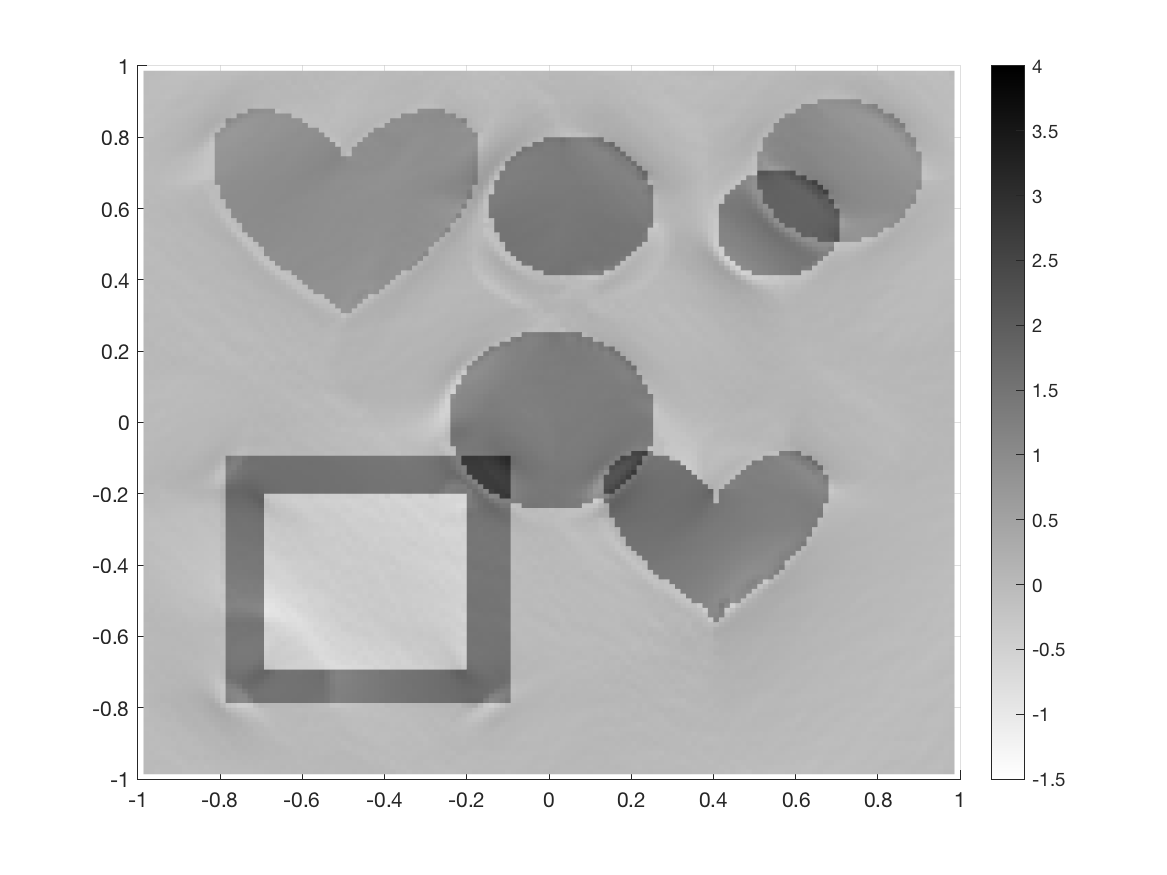}\label{mixed2_recon_lin}}

    \caption{Test Case 4- Another actual and reconstructed mixed phantom for various values of $\beta,\gamma,\delta$}
    \label{mixed2}
  \end{figure}
 
From Figure \ref{mixed2_recon3-01-01}, we observe the presence of a large number of artifacts. This is the case when the value of $\gamma = 0.01$ is small compared to the value of $\beta$. Increasing the value of $\gamma$ to 0.3 and, subsequently, to 0.5, reduces the number of artifacts as can be seen in Figures \ref{mixed2_recon3-3-01} and \ref{mixed2_recon3-5-01}. This shows that the $L^1$ regularization indeed leads to a nice contrast in the images, even when the sparsity level is low. Comparing the reconstruction obtained with the Picard algorithm, we see in Figure \ref{mixed2_recon_lin} the presence of a large number of artifacts and loss of contrast, especially in the hole and the inclusions.

\section{Conclusions}\label{sec:conclusions}

In this paper, we propose a new framework to facilitate high contrast and high resolution reconstructions in CDII. Our framework is based on formulating the CDII inverse problem as a PDE-constrained optimization problem. In this setup, we minimize an objective functional comprising of least square interior data fitting terms corresponding to two boundary voltage measurements, a $L^2-L^1$ penalization terms of the log-conductivity that helps promotes sparsity patterns, thus, improving contrast. Additionally we introduce a PM filtering term to sharpen the edges and obtain high resolution images.

We characterized the solution of the optimization problem through an optimality system that was solved using a proximal scheme, coupled with $H^1$ denoising to remove artifacts in the reconstructions.  We then demonstrated the effectiveness of our proposed scheme through several numerical experiments and compared our results with an existing Picard-type scheme. Our scheme facilitated reconstructions of a wide variety of conductivity patterns with superior contrast and resolution, even in the presence of noise in the data.

\section{Acknowledgments}
S. Roy was partly supported by the National Cancer Institute, National Institutes of Health, grant number: 1R21CA242933-01.

\end{document}